\tikzset{>=Stealth}
\newtheorem*{rmks}{Remarks}
\newtheorem{thm}{Theorem}[section]
\newaliascnt{lma}{thm}
\newtheorem{lma}[lma]{Lemma}
\newaliascnt{prop}{thm}
\newaliascnt{cor}{thm}
\newtheorem{cor}[cor]{Corollary}
\newaliascnt{conj}{thm}
\newtheorem{conj}[conj]{Conjecture}
\newaliascnt{rmk}{thm}
\newtheorem{rmk}[rmk]{Remark}
\newaliascnt{defn}{thm}
\newcommand{\R}{\mathbb{R}}
\numberwithin{equation}{section}
\begin{document}

\title[Monotonicity of positive solutions in triangles]{
Monotonicity of positive solutions to semilinear elliptic equations with mixed boundary conditions in triangles
}
\author[R. Li]{Rui Li}
\address{College of Big Data and Internet, Shenzhen Technology University, Shenzhen 518118, P. R. China}
\email{lirui@sztu.edu.cn}

\author[R. Yao]{Ruofei Yao}
\thanks{Ruofei Yao is the corresponding author}
\address{School of Mathematics, South China University of Technology, Guangzhou 510641, P. R. China}
\email{yaorf5812@126.com; ruofeiyaopde@gmail.com}

\begin{abstract}
We study positive solutions of semilinear elliptic equations in a planar triangular domain under mixed boundary conditions, consisting of homogeneous Dirichlet boundary conditions on one side and homogeneous Neumann boundary conditions on the remaining two sides. Using the method of moving planes, we prove that if the Neumann vertex is non-obtuse, then every positive solution is strictly increasing in the direction of the inward unit normal to the Dirichlet side. If the Neumann vertex is obtuse, we show that monotonicity instead holds in the direction of the outward normal direction to the longer Neumann side, under certain technical conditions. Furthermore, by applying the maximum principle, we demonstrate that these monotonicity properties extend to the first mixed eigenfunction of the Laplacian: its unique global maximum lies on the longer Neumann side and coincides with the Neumann vertex precisely when that vertex is non-obtuse or when the Neumann sides have equal length. This answers a question raised in the Polymath7 research thread 1 regarding the location of extrema for mixed boundary eigenfunctions in triangles.
\end{abstract}

\keywords{Semilinear elliptic equations; Monotonicity; Moving plane method}
\subjclass[2010]{\text{Primary 35J61, Secondary 35B06, 35M12, 35B50}}

\date{\today}
\maketitle



\section{Introduction}

This paper investigates the monotonicity properties of positive solutions to semilinear elliptic equations with mixed boundary conditions:
\begin{equation} \label{LY101}
\begin{cases}
\Delta {u} + {f}({u}) = 0 & \text{in } \Omega, \\
{u} = 0 & \text{on } \Gamma_{D}, \\
\frac{\partial {u}}{\partial{\nu}} = 0 & \text{on } \Gamma_{N} = \partial\Omega \setminus \Gamma_{D},
\end{cases}
\end{equation}
where \({f}\) is locally Lipschitz continuous on \(\R\) and \(\Omega\) is a bounded domain in \(\R^{n}\) with \(n \geq 2\). The boundary \(\partial\Omega\) is partitioned into two disjoint parts: \(\Gamma_{D}\), a relatively closed subset of \(\partial\Omega\) on which Dirichlet boundary conditions are imposed, and \(\Gamma_{N} = \partial\Omega \setminus \Gamma_{D}\), where Neumann boundary conditions hold. Here, \(\nu\) denotes the unit outward normal vector along \(\partial\Omega\).

In the study of differential equations, a fundamental question is whether positive solutions have symmetry or perhaps monotonicity in some direction. Investigating such qualitative properties plays a crucial role in various aspects of analysis, such as blow-up analysis, a priori estimates, and Liouville-type theorems. One of the most powerful and widely used techniques for studying these properties is the \emph{method of moving planes}. This method was originally introduced by Alexandrov \cite{Ale56} in his seminal work on surfaces with constant mean curvature, and was subsequently adapted by Serrin \cite{Ser71} to establish symmetry results for overdetermined boundary value problems. Substantial progress was later made by Gidas, Ni, and Nirenberg \cite{GNN79, GNN81}, who refined the method to obtain celebrated symmetry and monotonicity results for positive solutions in bounded and unbounded domains, particularly when such solutions vanish on the boundary or decay at infinity. Building on these foundational contributions, Berestycki and Nirenberg further developed and extended the method in a series of influential works \cite{BN88, BN90, BN91}. In particular, in \cite{BN91}, they not only generalized the moving plane method but also introduced the \emph{sliding method}, which leverages a sharp version of the maximum principle in domains of small measure to derive symmetry and monotonicity results. This line of research was subsequently advanced by Berestycki, Caffarelli, and Nirenberg, who carried out a series of groundbreaking studies on the qualitative behavior of positive solutions \cite{BCN93, BCN96, BCN97a, BCN97b}. Since then, the symmetry and monotonicity of solutions have remained a central theme in the field, continuing to attract substantial attention in the literature (see, e.g., \cite{CGS89, CL91, LZ95, FSV08, CEG16, GM18, Far20, DF22} and the references therein).

Qualitative properties for mixed boundary value problems of the type \eqref{LY101} have been extensively investigated. In cylindrical domains, Berestycki and Nirenberg \cite{BN88, BN90} established symmetry and monotonicity results, while the second author \cite{CWY23} subsequently demonstrated one-dimensional symmetry in half-cylindrical domains. Berestycki and Pacella \cite{BP89} and Zhu \cite{Zhu01} established radial symmetry results for domains that are spherical sectors, while further radial symmetry results were obtained in \cite{dCP89, CW92, SGC97} for infinite sectorial cones. More recently, further symmetry and monotonicity properties for mixed boundary problems in various classes of bounded domains have been obtained by the second author \cite{CY18, YCL18, CLY21, YCG21, MY26, Yao26}. In the context of nonlinear mixed boundary problems, several investigations \cite{Ter95, DR04, WH07, DP19} have likewise addressed symmetry properties.

The linear case of \eqref{LY101} presents equally rich structure. In 2012, the Polymath7 project “The Hot Spots Conjecture for Acute Triangles” \cite{Pol12} initiated extensive discussions on the location of extrema of Laplace eigenfunctions in triangles. Subsequent progress and related developments are documented in \cite{JM22ar}.
In addition, the second author provided a comprehensive characterization of the second Neumann eigenfunction in \cite{CGY26}. Furthermore, in 2004 Ba\~{n}uelos, Pang, and Pascu \cite{BPP04} introduced the “hot spots” property for the mixed (Dirichlet--Neumann) eigenvalue problem, posing the question of whether the positive eigenfunction attains its maximum exclusively on the Neumann portion of the boundary. Within the framework of \cite{Pol12}, a specific question concerning linear mixed problems in triangles is raised (see comment 4 in the \href{https://polymathprojects.org/2012/06/12/polymath7-research-thread-1-the-hot-spots-conjecture/}{\color{black}Polymath7 research thread 1}).

\begin{conj}[\cite{Pol12}] \label{conj101}
The maximum of the first (positive) non-trivial eigenfunction for a triangle with mixed boundary conditions (two sides Neumann, and one side Dirichlet) occurs at the corner opposite to the Dirichlet side.
\end{conj}

In this paper, we investigate the monotonicity properties of the positive solution \({u}\) to \eqref{LY101} in a planar triangle. Throughout this paper, the nonlinearity \(f\) is assumed to be locally Lipschitz continuous on \(\R\); that is, \({f} \in \operatorname{Lip}_{\mathrm{loc}}(\R)\), and \(\Omega\) is an open triangle, with \(\Gamma_{D}\) constituting one side of the triangle and \(\Gamma_{N} = \partial\Omega \setminus \Gamma_{D}\). See \autoref{fig31MMP} for an illustration. In what follows, we refer to the vertex opposite \(\Gamma_{D}\) as the \textbf{Neumann vertex}, and to the other two vertices as the \textbf{mixed vertices}.

Our first main result concerns the symmetry and monotonicity of solutions in isosceles triangles.

\begin{thm} \label{thm102}
Let \({f} \in \operatorname{Lip}_{\mathrm{loc}}(\R)\), and let \({u}\) be a positive solution of \eqref{LY101} in a planar triangle \(\Omega\), where \(\Gamma_{D}\) is one side of the triangle \(\Omega\).
If the two Neumann sides have equal lengths, then \({u}\) is symmetric with respect to the angle bisector at the Neumann vertex and satisfies \(\nabla {u} \cdot \mathbf{n} > 0\) in \(\Omega\), where \(\mathbf{n}\) is the unit inward normal vector to \(\Gamma_{D}\).
\end{thm}

The second main result is stated as follows.

\begin{thm} \label{thm103}
Let \({f} \in \operatorname{Lip}_{\mathrm{loc}}(\R)\), and let \({u}\) be a positive solution of \eqref{LY101} in a planar triangle \(\Omega\), where \(\Gamma_{D}\) is one side of the triangle \(\Omega\).
If the Neumann vertex is \textbf{non-obtuse}, then \(\nabla {u} \cdot \mathbf{n} > 0\) in \(\Omega\), where \(\mathbf{n}\) is the unit inward normal vector to \(\Gamma_{D}\).
\end{thm}

These results were previously announced in \cite{CGY26}. We now offer several remarks:
\begin{enumerate} [label = \rm(\arabic*)]
\item
Both \autoref{thm102} and \autoref{thm103} are known to hold when the Neumann vertex is a right angle. This follows directly from the results of \cite{GNN79, BN91} by reflecting the solution across the Neumann boundaries.
\item
In the linear case, \autoref{thm103} holds when the Dirichlet side meets a Neumann side at a right angle. This result originates from the study of the hot spots conjecture for planar domains with two axes of symmetry (see \cite[Theorem 1.1]{JN00}).
\item
The monotonicity of the first mixed eigenfunction (corresponding to the linear version of \autoref{thm103}) can be leveraged to derive mixed eigenvalue inequalities for triangles. In particular, the work of the second author in \cite{CGY26} confirms \cite[Conjecture 1.2]{Siu16}. These eigenvalue inequalities play a crucial role in establishing the complete form of the hot spots conjecture for triangles, as detailed in \cite{CGY26}.
\end{enumerate}

In the case of an obtuse Neumann vertex, the solution \({u}\) may not be monotone in the direction normal to \(\Gamma_{D}\). Nevertheless, one can deduce monotonicity in another fixed direction.

\begin{thm} \label{thm104}
Let \({f} \in \operatorname{Lip}_{\mathrm{loc}}(\R)\), and let \({u}\) be a positive solution of \eqref{LY101} in a planar triangle \(\Omega\), where \(\Gamma_{D}\) is one side of the triangle \(\Omega\). Suppose that the Neumann vertex is \textbf{obtuse}. If the angles \(\alpha\) and \(\beta\) at the two mixed vertices satisfy
\begin{equation} \label{LY103}
\max\{\alpha, \beta\} \geq \min \left\{ \frac{\pi}{4}, 2\alpha + 2\beta - \frac{\pi}{2} \right\},
\end{equation}
then \(\nabla {u} \cdot \mathbf{n} < 0\) in \(\Omega\), where \(\mathbf{n}\) denotes the unit inward normal vector to the longer Neumann side.
Moreover, the maximum of \({u}\) is attained at the Neumann vertex if and only if the two Neumann sides have equal lengths.
\end{thm}

When condition \eqref{LY103} is not satisfied, the nonlinear obtuse case is not covered by the present argument. Nevertheless, in the linear case, the corresponding monotonicity result holds.

\begin{thm} \label{thm105}
Let \({u} > 0\) be the eigenfunction corresponding to the first eigenvalue of the mixed Dirichlet–Neumann problem in a planar triangle \(\Omega\) with one Dirichlet side \(\Gamma_{D}\) and two Neumann sides \(\Gamma_{N}\), and assume that the Neumann vertex is obtuse. Then \({u}\) satisfies the following properties:
\begin{enumerate} [label = \rm(\arabic*)]
\item
\({u}\) is strictly decreasing in the inward normal direction to the longer Neumann side in \(\Omega\);
\item
\({u}\) has at most one non-vertex critical point, which, if it exists, is non-degenerate and lies in the interior of the longer Neumann side;
\item
the maximum of \({u}\) is attained at a vertex if and only if the triangle \(\Omega\) is isosceles.
\end{enumerate}
\end{thm}

In relation to \autoref{conj101}, \autoref{thm102} and \autoref{thm103} resolve the conjecture in the semilinear case when the Neumann vertex is non-obtuse. Meanwhile, \autoref{thm104} provides a partial answer
in the semilinear obtuse case under the additional condition \eqref{LY103}. In the linear case, \autoref{thm105} yields a complete result for all obtuse triangles. Therefore, the conjecture is completely resolved in the linear case, and in the semilinear case under condition \eqref{LY103}. The nonlinear obtuse case without \eqref{LY103} remains open.

\begin{cor} 
Let \({u} > 0\) be the first eigenfunction for a triangle with one Dirichlet side and two Neumann sides. Then we have
\begin{enumerate} [label = \rm(\arabic*)]
\item
\({u}\) is strictly decreasing in the inward normal direction to the longer Neumann side in \(\Omega\);
\item
the maximum point of \({u}\) is unique and lies on the longer Neumann side;
\item
the maximum is attained at the vertex opposite the Dirichlet side if and only if either the Neumann vertex is non-obtuse or the two Neumann sides have the same length.
\end{enumerate}
\end{cor}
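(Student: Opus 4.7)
The plan is to specialize the linear case $f(u) = \mu_1 u$ of the preceding theorems to the first mixed eigenfunction $u$. Since $u \mapsto \mu_1 u$ is linear and hence lies in $\text{\rm Lip}_{\text{\rm loc}}(\R)$, the hypotheses of \autoref{thm11}, \autoref{thm12}, and \autoref{thm14} are all met by $u$. Part~(1) is then immediate: invoke \autoref{thm12} when the Neumann vertex is non-obtuse, to get monotonicity in the direction $d$ normal to $\Gamma_D$; and invoke \autoref{thm14}(1) when it is obtuse, to get monotonicity in the direction $d$ normal to the middle (= longer Neumann) side.

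For part~(2) I would first rule out interior maxima: the superharmonic identity $\Delta u = -\mu_1 u < 0$ and the strong maximum principle force the maximum onto $\partial \Omega$, and then onto $\overline{\Gamma_N}$ since $u \equiv 0$ on $\Gamma_D$. To pin the maximum down on $\overline{\Gamma_N}$, I would upgrade the monotonicity $v := \partial_d u \leq 0$ to strict inequality as follows. The function $v$ satisfies a linear elliptic equation obtained by differentiating the PDE, and differentiating the Neumann boundary condition tangentially along $\Gamma_N$ yields $\partial_\nu v = 0$ on the relatively open Neumann sides. The strong maximum principle then gives $v < 0$ in $\Omega$, and Hopf's lemma excludes $v = 0$ at any interior point of a Neumann side. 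Hence $\nabla u$ is nonzero at every such interior point, so $u$ has no critical point in the relative interior of either Neumann side, except for the one supplied by \autoref{thm14}(2) in the obtuse non-isosceles case.

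For part~(3) I would split on the Neumann angle. In the non-obtuse case the only remaining candidate for the maximum on $\overline{\Gamma_N}$ is the Neumann vertex $O$, since $u = 0$ at $A, B$ and no interior critical point survives; uniqueness of the maximum and its location on the closure of the longer Neumann side (which trivially contains $O$) then finish both (2) and (3) here. In the obtuse case, \autoref{thm14}(3) asserts that the maximum is at the vertex $O$ exactly when the triangle is isosceles; in the isosceles sub-case \autoref{thm11} provides symmetry across the bisector of $\Gamma_N$ and the argument above locates the maximum at $O$, while in the non-isosceles sub-case the only remaining candidate is the unique non-vertex critical point from \autoref{thm14}(2), which lies in the relative interior of the middle (= longer Neumann) side. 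The principal obstacle throughout is the Hopf upgrade of the previous paragraph: passing from monotonicity to strict monotonicity on the relatively open Neumann sides is what rules out spurious boundary critical points and hence delivers both uniqueness and the precise location of the maximum.
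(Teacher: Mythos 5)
Your reduction of the corollary to \autoref{thm11}--\autoref{thm14} is the right overall strategy (it is how the paper itself obtains the corollary), but the step you yourself identify as the crux --- excluding critical points of $u$ from the relative interiors of the Neumann sides via a ``Hopf upgrade'' --- rests on a false boundary identity. Differentiating $\partial_\nu u=0$ tangentially along a flat Neumann side only gives $\partial_\nu(\partial_\tau u)=0$ for the \emph{tangential} derivative; for your $v=\partial_d u$, with $d$ normal to $\Gamma_D$ (or to the middle side) and hence not tangent to the side in question, one has on that side $\partial_\nu v=(d\cdot\tau)\,\partial_{\tau\nu}u+(d\cdot\nu)\,\partial_{\nu\nu}u=(d\cdot\nu)\bigl(\Delta u-\partial_{\tau\tau}u\bigr)=-(d\cdot\nu)\bigl(\mu_1 u+\partial_{\tau\tau}u\bigr)$, which has no reason to vanish; moreover on the side to which $d$ is normal $v\equiv 0$, so no Hopf conclusion is available there at all, and if your claimed Neumann condition for $v$ were true it would wrongly exclude the non-vertex critical point whose existence is forced by \autoref{thm14}(2)--(3) in the obtuse non-isosceles case. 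Consequently your argument does not establish strict monotonicity of $u$ along the open Neumann sides, and with it falls the proof that the maximum sits at the Neumann vertex in the non-obtuse case and the uniqueness of the maximum point (including in the obtuse isosceles case). This is precisely where the paper does its real work: the non-vanishing tangential derivative \eqref{LY322a}--\eqref{LY322b} is obtained from Serrin's boundary lemma (\autoref{lma205BP}) applied to the reflected difference $w^{\lambda,\pi/2}$ (Step 2 of \autoref{lma302}), from the Hartman--Wintner expansion argument (Part 2 of \autoref{lma501}), from \autoref{thm401} in the isosceles case, and in the linear obtuse case from the continuity/nodal-line/eigenvalue-comparison analysis of \autoref{thm504}; equivalently one can invoke the full cone of monotonicity directions $\nabla u\cdot \emph{e}_{\theta}<0$, $\theta\in[-\beta,\alpha]$, of \autoref{thm309}. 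None of this can be recovered from the bare statement of \autoref{thm12}, which also means that, read literally, your treatment of item (1) in the non-obtuse case (monotonicity normal to the \emph{middle} side, which may well be a Neumann side rather than $\Gamma_D$) already needs \autoref{thm309} and its companion theorem rather than \autoref{thm12} alone.

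Two smaller points. The claim that $\Delta u=-\mu_1 u<0$ together with the maximum principle ``forces the maximum onto $\partial\Omega$'' is backwards: superharmonicity excludes interior \emph{minima}, not maxima (the first Dirichlet eigenfunction of a disc has an interior maximum); interior critical points are instead excluded by the strict inequality $v<0$ in $\Omega$, which you do obtain correctly from the strong maximum principle, so this part is repairable. By contrast, your handling of the obtuse non-isosceles case is essentially sound because there you can quote \autoref{thm14}(2)--(3) directly, which already contain the uniqueness and location of the non-vertex critical point; the genuine gap is confined to the non-obtuse case and to the uniqueness assertions, where the invalid Hopf step is the only substitute you offer for the paper's Sections~3--6 machinery.
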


It is noteworthy that for an obtuse Neumann vertex in a non-isosceles triangle the maximum of the eigenfunction is uniquely and exclusively located in the interior of the longer Neumann side. Moreover, several additional properties of the first mixed Laplace eigenfunction are established in \cite{Hat24}.

The proofs of \autoref{thm102}, \autoref{thm103}, and \autoref{thm104} rely on the application of the moving plane method \cite{GNN79} and certain versions of the maximum principle for mixed boundary problems, as presented in \cite{YCL18, YCG21}. The analysis of the local behavior near conical points \cite{Gri11}, as well as the structure of the nodal lines of directional derivatives \cite{HW53, HHT09}, play a pivotal role in the proofs of \autoref{thm105} and \autoref{thm104}.

The structure of the paper is as follows. In \autoref{Sec02}, we present preliminary results concerning the maximum principle and the monotonicity properties near the boundary. \autoref{Sec03acu} introduces the moving lines and moving domains -- key ingredients of our method -- and establishes \autoref{thm103} for acute triangles. In \autoref{Sec04iso}, we focus on isosceles triangles and provide the proof of \autoref{thm102}. \autoref{Sec05obt} is devoted to obtuse triangles, culminating in the proof of \autoref{thm104}. Finally, in \autoref{Sec06EF}, we consider the linear case and prove \autoref{thm105}.


\section{Some preliminaries} \label{Sec02}

In this section, we collect some preliminary results that will be used throughout the paper. These include a variant of the maximum principle for mixed boundary conditions, as well as local monotonicity properties of solutions near Dirichlet and Neumann boundaries.

\begin{lma} \label{lma201}
Let \(\Omega \subset \R^{n}\) be a bounded Lipschitz domain. Its boundary \(\partial\Omega\) is decomposed as the disjoint union \(\partial\Omega = \Gamma_{0} \cup \Gamma_{1}\), where \(\Gamma_{0}\) is a relatively closed subset of \(\partial\Omega\) and \(\Gamma_{1} = \partial\Omega \setminus \Gamma_{0}\). Denote by \(\nu({x})\) the unit outward normal vector on \(\Gamma_{1}\). Suppose that
\begin{equation} \label{LY201}
\begin{cases}
\Delta {v} + {c}( {x} ){v} \leq 0 & \text{in } \Omega,
\\
{v} \geq 0 & \text{on } \Gamma_{0},
\\
\frac{\partial {v}}{\partial \nu} \geq 0 & \text{on } \Gamma_{1}
\end{cases}
\end{equation}
with \(\sup_{{x} \in \Omega}|{c}({x})| < {c}_{0}\) for some \({c}_{0} \in (0, \infty)\). Assume further that the following geometric conditions are satisfied:
\begin{enumerate} [label = \rm(\arabic*)]
\item
\(\Omega \cup \Gamma_{1} \subset\{ {x} \in \R^{n}: \; 0 < x_{1}\}\);
\item
\(\nu \cdot \textit{e}_{1} \geq 0\) on \(\Gamma_{1}\), where \(\textit{e}_{1} = (1, 0, \ldots, 0)\) is the first coordinate unit vector;
\item
\(\Omega \subset \{ {x} \in \R^{n}: \; 0 < {x} \cdot \textit{e}_{1} < \eta\}\), where \(\eta = \pi/(2\sqrt{{c}_{0}})\).
\end{enumerate}
Then, \({v} \geq 0\) in \(\Omega\).
\end{lma}

\begin{proof}
We construct a positive supersolution associated with \eqref{LY201}:
\begin{equation*}
{g}({x}) = \sin \frac{\pi x_{1}}{2\eta}.
\end{equation*}
Note that this function vanishes on \({x}_{1} = 0\), is positive in \({x}_{1} > 0\), and satisfies
\begin{equation*}
\begin{cases}
\Delta {g} + {c}( {x} ){g} < 0 & \text{in } \Omega,
\\
\nu \cdot \nabla {g} \geq 0 & \text{on } \Gamma_{1}.
\end{cases}
\end{equation*}
Applying the maximum principle in \cite[Lemma 6 and Lemma 7]{YCL18}, we obtain \({v} \geq 0\) in \(\Omega\).
\end{proof}

\begin{rmk}
If the double (reflected) domain \(\tilde{\Omega}\), obtained by reflecting \(\Omega\) across the hyperplane \(\{ x_{1} = 0 \}\), is convex, then the condition \(\nu \cdot \textit{e}_{1} \geq 0\) holds automatically on \(\Gamma_{1}\).
\end{rmk}

Next, we recall two useful lemmas from \cite{GNN79} and \cite{BP89} concerning the monotonicity properties of solutions near Dirichlet and Neumann boundaries. Consider a solution \({u}({x})\) of the semilinear elliptic equation
\begin{equation} \label{LY203}
\Delta {u} + {f}({u}) = 0 \text{ in } \Omega,
\end{equation}
where \({f} \in \operatorname{Lip}_{\mathrm{loc}}(\R)\) and \(\Omega\) is a bounded domain.

\begin{lma} \label{lma203GNN}
Let \(\bar{x} \in \partial \Omega\), and let \(\gamma \in \R^{n}\) be a unit vector such that \(\nu(\bar{x}) \cdot \gamma > 0\), where \(\nu(\bar{x})\) denotes the outward unit normal vector at \(\bar{x}\). For some \(\varepsilon > 0\), assume that \({u} \in {C}^{2}(\overline{\Omega}_{\varepsilon})\) is a solution of \eqref{LY203}, and satisfies
\begin{equation*}
{u} \geq 0, \; {u} \not\equiv 0 \text{ in } \Omega_{\varepsilon} \quad \text{and} \quad {u} = 0 \text{ on } \Gamma_{\varepsilon},
\end{equation*}
where
\begin{equation*}
\Omega_{\varepsilon} = \{{x} \in \Omega: \; |{x} - \bar{x}| < \varepsilon\} \quad \text{and} \quad \Gamma_{\varepsilon} = \{{x} \in \partial\Omega: \; |{x} - \bar{x}| < \varepsilon\}.
\end{equation*}
Moreover, assume that the boundary \(\Gamma_{\varepsilon}\) is of class \({C}^{2}\). Then, there exists \(\delta > 0\) such that
\begin{equation*}
\frac{\partial {u}}{\partial\gamma} < 0 \text{ in } \Omega_{\delta} = \{{x} \in \Omega: \; |{x} - \bar{x}| < \delta\}.
\end{equation*}
\end{lma}

\begin{proof}
The proof is given in \cite[Lemma 2.1]{GNN79}.
\end{proof}

The above lemma establishes the strict inward monotonicity of solutions in a neighborhood of the Dirichlet boundary. We next recall a related monotonicity result near Neumann boundaries, which is instrumental in the application of the moving plane method.

\begin{lma} \label{lma205BP}
Let \(\bar{x} = (\bar{x}_{1}, \ldots, \bar{x}_{n}) \in \partial\Omega\). For some \(\varepsilon > 0\), suppose the domain \(\Omega_{\varepsilon, \bar{x}} = \{{x} \in \Omega: |{x} - \bar{x}| < \varepsilon \}\) coincides with the half-ball \({B}_{\varepsilon, \bar{x}}^{ + } = \{{x} \in \R^{n}: |{x} - \bar{x}| <\varepsilon, \; {x}_{1} > \bar{x}_{1} \}\).
Suppose that \({u}\) is a \(C^{2}\) solution of \eqref{LY203} in \(\overline{\Omega_{\epsilon, \bar{x}}}\), satisfying the Neumann boundary condition
\begin{equation*}
\partial_{{x}_{1}} {u} = 0 \text{ on } \{{x} \in \R^{n}: |{x} - \bar{x}| <\varepsilon, \; {x}_{1} = \bar{x}_{1} \},
\end{equation*}
and suppose that the following reflection condition holds:
\begin{equation*}
{u}({x}', x_{n}) < {u}({x}', 2\bar{x}_{n} - x_{n}) \; \text{ for all } {x} \in \Omega_{\varepsilon, \bar{x}} \text{ with } x_{n} > \bar{x}_{n},
\end{equation*}
where \({x}' = ({x}_{1}, \ldots, {x}_{n-1})\).
Then
\begin{equation*}
\partial_{x_{n}}{u}(\bar{x}) < 0.
\end{equation*}
\end{lma}

\begin{proof}
This result follows from Serrin's lemma \cite{Ser71, GNN79}. For further details, see the proof of \cite[Theorem 2.4]{BP89}.
\end{proof}


\section{The proof of monotonicity for non-obtuse Neumann vertex} \label{Sec03acu}

In this section, we establish the monotonicity result under the assumption that the two Neumann sides form an acute or right angle. We begin by introducing the geometric setting and notations concerning moving lines and moving domains, which will be used consistently throughout the paper. We adopt the following notation:
\begin{itemize}
\item
The symbol \(\mathbb{N}\) and \(\mathbb{N}^{ + }\) denote the sets of nonnegative and positive integers, respectively.
\item
The unit vector in the direction of an angle \({\theta}\) is denoted by \(\mathbf{e}_{\theta}: = (\cos\theta, \sin\theta)\).
\item
\(\mathrm{Int}( \cdot )\) denotes the interior of a domain or a curve.
\end{itemize}

For simplicity, we assume that \(\Omega\) is the open triangle with vertices \({z}_{0}\), \({z}_{1}\), and \({z}_{2}\). Denote its interior angles by
\begin{equation}
\angle {z}_{0}{z}_{1}{z}_{2} = \alpha, \quad \angle {z}_{0}{z}_{2}{z}_{1} = \beta, \quad \text{and} \quad \angle {z}_{1}{z}_{0}{z}_{2} = \gamma = \pi - \alpha - \beta.
\end{equation}
We place \({z}_{0}\) at the origin \((0, 0)\), and assume that \({z}_{1}\) and \({z}_{2}\) lie on the vertical line \(\{\, {x}_{1} = 1 \}\), with \({z}_{1}\) below \({z}_{2}\), see \autoref{fig31MMP}. Denote the side \({z}_{1}{z}_{2}\) by the Dirichlet boundary \(\Gamma_{D}\), and let the remaining two sides form the Neumann boundary, \(\Gamma_{N} = \partial\Omega \setminus \Gamma_{D}\). We further split \(\Gamma_{N}\) into its lower and upper Neumann boundaries,
\begin{equation*}
\Gamma_{N}^{ - } = \Gamma_{N} \cap \{{x}_{2} < 0\} \quad \text{and} \quad \Gamma_{N}^{ + } = \Gamma_{N} \cap \{{x}_{2} > 0\}.
\end{equation*}
We emphasize that \(\Gamma_{D}\) is a closed line segment, while both \(\Gamma_{N}^{-}\) and \(\Gamma_{N}^{+}\) are open line segments (excluding their endpoints). We refer to the vertex \({z}_{0}\) as the \textbf{Neumann vertex}, the vertex \({z}_{1}\) as the \textbf{lower mixed vertex}, and the vertex \({z}_{2}\) as the \textbf{upper mixed vertex}. Finally, the lengths of the lower and upper Neumann sides are given by
\begin{equation}
\Phi_{0} = \csc\alpha \quad \text{and} \quad \Psi_{0} = \csc\beta.
\end{equation}

Let \({P}_{\lambda} = (\lambda\sin\alpha, - \lambda\cos\alpha)\) be a point on the line containing the lower Neumann boundary \(\Gamma_{N}^{ - }\). For each angle \(\vartheta \in [0, \pi]\), we define the moving line \({T}_{\lambda, \vartheta}\) to be the line passing through \({P}_{\lambda}\) and forming an angle \(\vartheta\) with \(\Gamma_{N}^{ - }\), that is,
\begin{equation*}
{T}_{\lambda, \vartheta}: = \{{x} \in \R^{2}: \; ({x} - {P}_{\lambda}) \cdot \mathbf{e}_{\vartheta + \alpha - \pi} = 0\},
\end{equation*}
Equivalently, the moving line \({T}_{\lambda, \vartheta}\) can be written in coordinate form as
\begin{equation*}
{T}_{\lambda, \vartheta} = \{{x} \in \R^{2}: \; ({x}_{1} - \lambda\sin\alpha)\cos(\vartheta + \alpha) + ({x}_{2} + \lambda\cos\alpha)\sin(\vartheta + \alpha) = 0\}.
\end{equation*}
(see \autoref{fig31MMP}). We denote by \(\Omega_{\lambda, \vartheta}\) the right open cap of \(\Omega\) determined by \({T}_{\lambda, \vartheta}\), defined as
\begin{equation*}
\Omega_{\lambda, \vartheta}: = \{{x} \in \Omega: \; ({x} - {P}_{\lambda}) \cdot \mathbf{e}_{\vartheta + \alpha} < 0\}.
\end{equation*}
Moreover, we denote by \(\Gamma_{D}'\), \((\Gamma_{N})'\), \((\Gamma_{N}^{ - })'\), and \((\Gamma_{N}^{ + })'\) the reflections of \(\Gamma_{D}\), \(\Gamma_{N}\), \(\Gamma_{N}^{ - }\), and \(\Gamma_{N}^{ + }\) with respect to \({T}_{\lambda, \vartheta}\), respectively. Similarly, define \({Q}_{\lambda} = (\lambda\sin\beta, \lambda\cos\beta)\) and let \(\hat{{T}}_{\lambda, \vartheta}\) be the moving line passing through \({Q}_{\lambda}\) that forms an angle \(\vartheta \in [0, \pi]\) with the upper boundary \(\Gamma_{N}^{ + }\). Explicitly,
\begin{equation*}
\begin{aligned}
\hat{{T}}_{\lambda, \vartheta} & = \{{x} \in \R^{2}: \; ({x} - {Q}_{\lambda}) \cdot \mathbf{e}_{\pi - \beta - \vartheta} = 0\}
\\ & =
\{{x} \in \R^{2}: \; ({x}_{1} - \lambda\sin\beta)\cos(\vartheta + \beta) - ({x}_{2} - \lambda\cos\beta)\sin(\vartheta + \beta) = 0\}.
\end{aligned}
\end{equation*}

We aim to establish the directional monotonicity of \({u}\) along the lines \({T}_{\lambda, \vartheta}\) and \(\hat{T}_{\lambda, \vartheta}\):
\begin{gather}
\label{LY303a}
\nabla {u} \cdot \mathbf{e}_{\vartheta + \alpha} > 0 \text{ on } \Omega \cap {T}_{\lambda, \vartheta},
\\
\label{LY303b}
\nabla {u} \cdot \mathbf{e}_{ - \vartheta - \beta} > 0 \text{ on } \Omega \cap \hat{T}_{\lambda, \vartheta},
\end{gather}
for every \(\lambda > 0\) and for suitable choices of \(\vartheta \in (0, \pi)\). Since the proofs of \eqref{LY303a} and \eqref{LY303b} are analogous, it suffices to focus on establishing \eqref{LY303a}. In order to show the monotonicity properties \eqref{LY303a} and \eqref{LY303b}, we will employ the well-known moving plane method while using a new moving domain in place of the caps \(\Omega_{\lambda, \vartheta}\). So, for \(\lambda \geq 0\) and \(0 < \vartheta < \pi\), we define the family of moving domains
\begin{equation}
{D}_{\lambda, \vartheta} = \big\{ {x} \in \Omega: \; {x}^{\lambda, \vartheta} \in \Omega \; \text{ and } \; ({x} - {P}_{\lambda}) \cdot \mathbf{e}_{\vartheta + \alpha} < 0 \big\}.
\end{equation}
Here, \({x}^{\lambda, \vartheta}\) denotes the reflection of the point \({x} \in \R^{2}\) with respect to the line \({T}_{\lambda, \vartheta}\) (see the left picture in \autoref{fig31MMP} for the case \(\vartheta = \pi/2\)).

Our goal is to show that
\begin{equation}
\label{LY305a}
{w}^{\lambda, \vartheta} = {u}^{\lambda, \vartheta} - {u} > 0 \text{ in } {D}_{\lambda, \vartheta},
\end{equation}
where \({u}^{\lambda, \vartheta}({x}) = {u}({x}^{\lambda, \vartheta})\). Since \({w}^{\lambda, \vartheta}\) may not satisfy the desired \emph{a priori} boundary conditions on \(\partial {D}_{\lambda, \vartheta} \cap \Omega\), we will opt to work on a smaller subdomain of \({D}_{\lambda, \vartheta}\).

\begin{figure}[h]\centering
\begin{tikzpicture}[scale = 2]
\pgfmathsetmacro\xab{1.0}; \pgfmathsetmacro\AngleA{60}; \pgfmathsetmacro\AngleB{40};
\pgfmathsetmacro\AngleC{180 - \AngleA - \AngleB};
\pgfmathsetmacro\yA{ - \xab*tan(90 - \AngleA)}; \pgfmathsetmacro\yB{\xab*tan(90 - \AngleB)};
\pgfmathsetmacro\xA{\xab}; \pgfmathsetmacro\xB{\xab};
\pgfmathsetmacro\PHI{\xab/sin(\AngleA)}; \pgfmathsetmacro\PSI{\xab/sin(\AngleB)};
\pgfmathsetmacro\LAMBDA{\PHI*0.6}; \pgfmathsetmacro\THETA{90};
\pgfmathsetmacro\THETAa{0}; \pgfmathsetmacro\THETAc{2*\THETA - \THETAa};
\pgfmathsetmacro\xP{\LAMBDA*sin(\AngleA)}; \pgfmathsetmacro\yP{ - \LAMBDA*cos(\AngleA)};
\pgfmathsetmacro\xQ{\xab}; \pgfmathsetmacro\yQ{\yA + (\PHI - \LAMBDA)*sin(\THETA)/sin(\THETA + \AngleA)};
\pgfmathsetmacro\lengthB{(\PHI - \LAMBDA)*sin(\AngleA)/sin(\THETA + \AngleA)};
\pgfmathsetmacro\lengthA{(\PHI - \LAMBDA)*sin(\AngleA)/sin(\THETAa + \AngleA)};
\pgfmathsetmacro\lengthC{\lengthA};
\pgfmathsetmacro\xQ{\xP + \lengthB*cos(\THETA + \AngleA - 90)}; \pgfmathsetmacro\yQ{\yP + \lengthB*sin(\THETA + \AngleA - 90)};
\pgfmathsetmacro\xR{\xP + \lengthA*cos(\THETAa + \AngleA - 90)}; \pgfmathsetmacro\yR{\yP + \lengthA*sin(\THETAa + \AngleA - 90)};
\pgfmathsetmacro\xRR{\xP + \lengthC*cos(\THETAc + \AngleA - 90)}; \pgfmathsetmacro\yRR{\yP + \lengthC*sin(\THETAc + \AngleA - 90)};
\fill[gray, yellow, draw = black] (\xP, \yP) -- (\xQ, \yQ) -- (\xRR, \yRR) -- cycle;
\fill[gray, green, draw = black] (\xP, \yP) -- (\xQ, \yQ) -- (\xR, \yR) -- cycle;
\draw[thick] (0, 0) node [left] {${z}_{0}$} -- (\xA, \yA) node [below = 2pt, right] {${z}_{1}$} -- (\xB, \yB) node [above = 2pt, left] {${z}_{2}$} -- cycle;
\draw[red, thick]({\xP - 0.58*(\xQ - \xP)}, {\yP - 0.58*(\yQ - \yP)}) node [below] {${T}_{\lambda, \vartheta}$} -- ({\xP + 1.7*(\xQ - \xP)}, {\yP + 1.7*(\yQ - \yP)});
\fill (\xP, \yP) circle (0.5pt) node [right = 3pt, below] {\footnotesize ${P}_{\lambda}$};
\node at ({\xA + \xab*0.21*cos(90 + \AngleA/2)}, {\yA + \xab*0.21*sin(90 + \AngleA/2)}) {\small $\alpha$};
\node at ({\xB + \xab*0.23*cos( - 90 - \AngleB/2)}, {\yB + \xab*0.23*sin( - 90 - \AngleB/2)}) {\small $\beta$};
\node at ({\xab*0.16*cos(\AngleA/2 - \AngleB/2)}, {\xab*0.16*sin(\AngleA/2 - \AngleB/2)}) {\small $\gamma$};
\node[right] at ({\xA}, {0}) {\small $\Gamma_{D}$};
\node[left] at ({\xA*0.5}, {\yA*0.5}) {\small $\Gamma_{N}$};
\node[left] at ({\xB/2}, {\yB/2}) {\small $\Gamma_{N}$};
\end{tikzpicture}
\hspace{1ex}
\begin{tikzpicture}[scale = 2]
\pgfmathsetmacro\xab{1.0}; \pgfmathsetmacro\AngleA{60}; \pgfmathsetmacro\AngleB{40};
\pgfmathsetmacro\AngleC{180 - \AngleA - \AngleB};
\pgfmathsetmacro\yA{ - \xab*tan(90 - \AngleA)}; \pgfmathsetmacro\yB{\xab*tan(90 - \AngleB)};
\pgfmathsetmacro\xA{\xab}; \pgfmathsetmacro\xB{\xab};
\pgfmathsetmacro\PHI{\xab/sin(\AngleA)}; \pgfmathsetmacro\PSI{\xab/sin(\AngleB)};
\pgfmathsetmacro\LAMBDA{\PHI*0.4}; \pgfmathsetmacro\THETA{\AngleC*1.15};
\pgfmathsetmacro\THETAa{(90 - \AngleA)*1.1}; \pgfmathsetmacro\THETAc{2*\THETA - \THETAa};
\pgfmathsetmacro\xP{\LAMBDA*sin(\AngleA)}; \pgfmathsetmacro\yP{ - \LAMBDA*cos(\AngleA)};
\pgfmathsetmacro\llBa{(\PHI - \LAMBDA)*sin(\AngleA)/sin(\THETA + \AngleA)};
\pgfmathsetmacro\llBb{\LAMBDA*sin(\AngleC)/sin(\THETA - \AngleC)};
\pgfmathsetmacro\llB{min(\llBa, \llBb)}; 
\pgfmathsetmacro\llAa{(\PHI - \LAMBDA)*sin(\AngleA)/sin(\THETAa + \AngleA)};
\pgfmathsetmacro\llAb{\LAMBDA*sin(\AngleC)/sin(\THETAa - \AngleC)};
\pgfmathsetmacro\llAc{max(\llAa, \llAb)}; 
\pgfmathsetmacro\llCa{(\PHI - \LAMBDA)*sin(\AngleA)/sin(\THETAc + \AngleA)};
\pgfmathsetmacro\llCb{\LAMBDA*sin(\AngleC)/sin(\THETAc - \AngleC)};
\pgfmathsetmacro\llCc{max(\llCa, \llCb)}; 
\pgfmathsetmacro\llA{min(\llAc, \llCc)};
\pgfmathsetmacro\llC{min(\llAc, \llCc)};
\pgfmathsetmacro\xQ{\xP + \llB*cos(\THETA + \AngleA - 90)}; \pgfmathsetmacro\yQ{\yP + \llB*sin(\THETA + \AngleA - 90)};
\pgfmathsetmacro\xR{\xP + \llA*cos(\THETAa + \AngleA - 90)}; \pgfmathsetmacro\yR{\yP + \llA*sin(\THETAa + \AngleA - 90)};
\pgfmathsetmacro\xRR{\xP + \llC*cos(\THETAc + \AngleA - 90)}; \pgfmathsetmacro\yRR{\yP + \llC*sin(\THETAc + \AngleA - 90)};
\pgfmathsetmacro\llFa{\llB*sin(\THETA - \THETAa)/sin(\THETAa + \AngleA)};
\pgfmathsetmacro\llFt{\xab*(cot(\AngleA) + cot(\AngleB)) - (\PHI - \LAMBDA)*sin(\THETA)/sin(\THETA + \AngleA)};
\pgfmathsetmacro\llFb{\llFt*sin(\AngleB)/sin(180 - 2*\THETA - \AngleA + \AngleC)};
\pgfmathsetmacro\llF{min(\llFa, \llFb)};
\pgfmathsetmacro\xS{\xab}; \pgfmathsetmacro\yS{\yQ - \llF};
\pgfmathsetmacro\xSS{\xQ + \llF*cos(2*\THETA + 2*\AngleA - 90)}; \pgfmathsetmacro\ySS{\yQ + \llF*sin(2*\THETA + 2*\AngleA - 90)};
\fill[gray, yellow, draw = black] (\xP, \yP) -- (\xQ, \yQ) -- (\xSS, \ySS) -- (\xRR, \yRR) -- cycle;
\fill[gray, green, draw = black] (\xP, \yP) -- (\xQ, \yQ) -- (\xS, \yS) -- (\xR, \yR) -- cycle;
\draw[thick] (0, 0) node [left] {${z}_{0}$} -- (\xab, \yA) node [below = 2pt, right] {${z}_{1}$} -- (\xab, \yB) node [above = 2pt, left] {${z}_{2}$} -- cycle;
\fill (\xP, \yP) circle (0.5pt) node [left] {\footnotesize ${P}_{\lambda}$};
\draw[red, thick]({\xP - 0.4*(\xQ - \xP)}, {\yP - 0.4*(\yQ - \yP)}) node [left] {${T}_{\lambda, \vartheta}$} -- ({\xP + 1.41*(\xQ - \xP)}, {\yP + 1.41*(\yQ - \yP)});
\draw[]({\xP - 0.4*(\xR - \xP)}, {\yP - 0.4*(\yR - \yP)}) -- ({\xP + 1.8*(\xR - \xP)}, {\yP + 1.8*(\yR - \yP)}) node [right] {${T}_{\lambda, \vartheta_{1}}$};
\draw[]({\xP - 0.4*(\xRR - \xP)}, {\yP - 0.4*(\yRR - \yP)}) -- ({\xP + 1.8*(\xRR - \xP)}, {\yP + 1.8*(\yRR - \yP)}) node [left] {${T}_{\lambda, \vartheta_{3}}$};
\draw[thick]({\xS - 1.8*(\xR - \xS)}, {\yS - 1.8*(\yR - \yS)}) -- ({\xS + 2.4*(\xR - \xS)}, {\yS + 2.4*(\yR - \yS)}) node [below = - 7pt] {\small ${T}_{\check{\lambda}, \check{\vartheta}} = \hat{T}_{\hat{\lambda}, \hat{\vartheta}}$};
\end{tikzpicture}
\hspace{1ex}
\begin{tikzpicture}[scale = 2]
\pgfmathsetmacro\xab{1.0}; \pgfmathsetmacro\AngleA{60}; \pgfmathsetmacro\AngleB{40};
\pgfmathsetmacro\AngleC{180 - \AngleA - \AngleB};
\pgfmathsetmacro\yA{ - \xab*tan(90 - \AngleA)}; \pgfmathsetmacro\yB{\xab*tan(90 - \AngleB)};
\pgfmathsetmacro\xA{\xab}; \pgfmathsetmacro\xB{\xab};
\pgfmathsetmacro\PHI{\xab/sin(\AngleA)}; \pgfmathsetmacro\PSI{\xab/sin(\AngleB)};
\pgfmathsetmacro\LAMBDA{\PHI*0.5}; \pgfmathsetmacro\THETA{\AngleC*1.3};
\pgfmathsetmacro\THETAa{(90 - \AngleA)*1.5}; \pgfmathsetmacro\THETAc{2*\THETA - \THETAa};
\pgfmathsetmacro\xP{\LAMBDA*sin(\AngleA)}; \pgfmathsetmacro\yP{ - \LAMBDA*cos(\AngleA)};
\pgfmathsetmacro\llBa{(\PHI - \LAMBDA)*sin(\AngleA)/sin(\THETA + \AngleA)};
\pgfmathsetmacro\llBb{\LAMBDA*sin(\AngleC)/sin(\THETA - \AngleC)};
\pgfmathsetmacro\llB{min(\llBa, \llBb)}; 
\pgfmathsetmacro\llAa{(\PHI - \LAMBDA)*sin(\AngleA)/sin(\THETAa + \AngleA)};
\pgfmathsetmacro\llAb{\LAMBDA*sin(\AngleC)/sin(\THETAa - \AngleC)};
\pgfmathsetmacro\llAc{max(\llAa, \llAb)}; 
\pgfmathsetmacro\llCa{(\PHI - \LAMBDA)*sin(\AngleA)/sin(\THETAc + \AngleA)};
\pgfmathsetmacro\llCb{\LAMBDA*sin(\AngleC)/sin(\THETAc - \AngleC)};
\pgfmathsetmacro\llCc{max(\llCa, \llCb)}; 
\pgfmathsetmacro\llA{min(\llAc, \llCc)};
\pgfmathsetmacro\llC{min(\llAc, \llCc)};
\pgfmathsetmacro\xQ{\xP + \llB*cos(\THETA + \AngleA - 90)}; \pgfmathsetmacro\yQ{\yP + \llB*sin(\THETA + \AngleA - 90)};
\pgfmathsetmacro\xR{\xP + \llA*cos(\THETAa + \AngleA - 90)}; \pgfmathsetmacro\yR{\yP + \llA*sin(\THETAa + \AngleA - 90)};
\pgfmathsetmacro\xRR{\xP + \llC*cos(\THETAc + \AngleA - 90)}; \pgfmathsetmacro\yRR{\yP + \llC*sin(\THETAc + \AngleA - 90)};
\pgfmathsetmacro\llFt{\xab/sin(\AngleB) - \LAMBDA*sin(\THETA)/sin(\THETA - \AngleC)};
\pgfmathsetmacro\llFa{\llFt*sin(\AngleB)/sin(2*\THETA - 2*\AngleC - \AngleB)};
\pgfmathsetmacro\llFb{\llB*sin(\THETAc - \THETA)/sin(\THETAc - \AngleC)};
\pgfmathsetmacro\llF{min(\llFa, \llFb)};
\pgfmathsetmacro\xS{\xQ + \llF*cos(2*\THETA - 2*\AngleC - \AngleB - 90)}; \pgfmathsetmacro\yS{\yQ + \llF*sin(2*\THETA - 2*\AngleC - \AngleB - 90)};
\pgfmathsetmacro\xSS{\xQ - \llF*cos(90 - \AngleB)}; \pgfmathsetmacro\ySS{\yQ - \llF*sin(90 - \AngleB)};
\fill (\xP, \yP) circle (0.5pt) node [left] {\footnotesize ${P}_{\lambda}$};
\fill[gray, yellow, draw = black] (\xP, \yP) -- (\xQ, \yQ) -- (\xSS, \ySS) -- (\xRR, \yRR) -- cycle;
\fill[gray, green, draw = black] (\xP, \yP) -- (\xQ, \yQ) -- (\xS, \yS) -- (\xR, \yR) -- cycle;
\draw[thick] (0, 0) node [left] {${z}_{0}$} -- (\xab, \yA) node [below = 3pt, left] {${z}_{1}$} -- (\xab, \yB) node [above = 2pt, right] {${z}_{2}$} -- cycle;
\draw[red, thick]({\xP - 0.4*(\xQ - \xP)}, {\yP - 0.4*(\yQ - \yP)}) node [left] {${T}_{\lambda, \vartheta}$} -- ({\xP + 1.26*(\xQ - \xP)}, {\yP + 1.26*(\yQ - \yP)});
\draw[]({\xP - 0.4*(\xR - \xP)}, {\yP - 0.4*(\yR - \yP)}) -- ({\xP + 1.8*(\xR - \xP)}, {\yP + 1.8*(\yR - \yP)}) node [right] {${T}_{\lambda, \vartheta_{1}}$};
\draw[]({\xP - 0.4*(\xRR - \xP)}, {\yP - 0.4*(\yRR - \yP)}) -- ({\xP + 1.8*(\xRR - \xP)}, {\yP + 1.8*(\yRR - \yP)}) node [left] {${T}_{\lambda, \vartheta_{3}}$};
\draw[]({\xS - 1.25*(\xQ - \xS)}, {\yS - 1.25*(\yQ - \yS)}) node [below = - 7pt] {\small ${T}_{\check{\lambda}, \check{\vartheta}} = \hat{T}_{\hat{\lambda}, \hat{\vartheta}}$} -- ({\xS + 1.43*(\xQ - \xS)}, {\yS + 1.43*(\yQ - \yS)});
\end{tikzpicture} \vspace*{-2ex}
\caption{The moving lines $\color{red}{T}_{\lambda, \vartheta}$ and moving domains $\color{green}{D}_{\lambda, \vartheta, \vartheta_{1}}$}
\label{fig31MMP}
\end{figure}
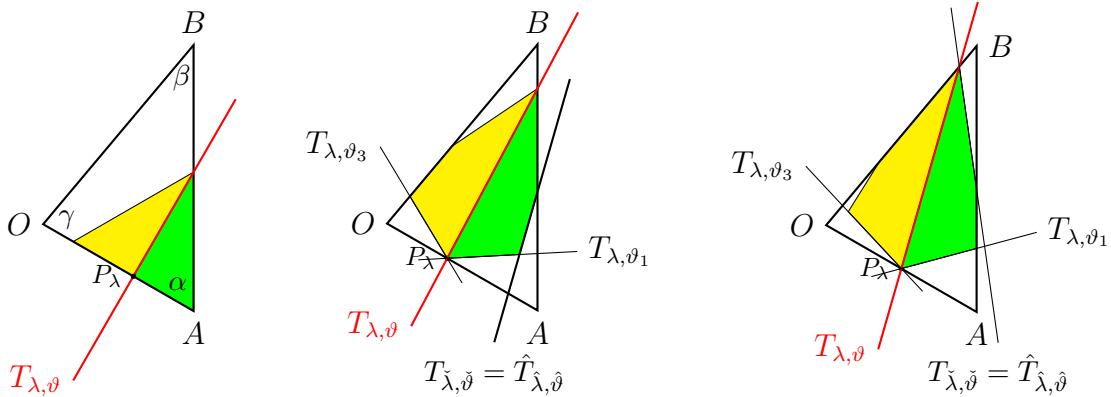

For \(\lambda \geq 0\), \(\vartheta \in (0, \pi)\), and \(0 \leq \vartheta_{1} < \vartheta_{3} \leq \pi\) with \(\vartheta_{1} + \vartheta_{3} = 2\vartheta\), we consider a family of moving domains \({D}_{\lambda, \vartheta, \vartheta_{1}}\) instead of the right cap \(\Omega_{\lambda, \vartheta}\) as follows:
\begin{equation}
{D}_{\lambda, \vartheta, \vartheta_{1}} = \big\{ {x} \in \Omega: \; {x}^{\lambda, \vartheta} \in \Omega \; \text{ and } \; ({x} - {P}_{\lambda}) \cdot \mathbf{e}_{\vartheta + \alpha} < 0 < ({x} - {P}_{\lambda}) \cdot \mathbf{e}_{\vartheta_{1} + \alpha} \big\}
\end{equation}
as illustrated in \autoref{fig31MMP}. Our goal is to prove that
\begin{equation}
\label{LY305b}
{w}^{\lambda, \vartheta} = {u}^{\lambda, \vartheta} - {u} > 0 \text{ in } {D}_{\lambda, \vartheta, \vartheta_{1}}.
\end{equation}
We now clarify some important features of these domains: (i) \({D}_{\lambda, \vartheta} \supset {D}_{\lambda, \vartheta, \vartheta_{1}}\), and both \({D}_{\lambda, \vartheta}\) and \({D}_{\lambda, \vartheta, \vartheta_{1}}\) are subsets of the right cap \(\Omega_{\lambda, \vartheta}\); (ii) Both \(\partial {D}_{\lambda, \vartheta}\) and \(\partial {D}_{\lambda, \vartheta, \vartheta_{1}}\) contain \({T}_{\lambda, \vartheta} \cap \Omega\); (iii) \({D}_{\lambda, \vartheta} = {D}_{\lambda, \vartheta, \vartheta_{1}}\) whenever \(\vartheta_{1} = \max\{0, 2\vartheta - \pi\}\).
The unit outward normal vector to \({D}_{\lambda, \vartheta, \vartheta_{1}}\) on its boundary \(\partial {D}_{\lambda, \vartheta, \vartheta_{1}}\) is still denoted by \({\nu}\). The boundary \(\partial {D}_{\lambda, \vartheta, \vartheta_{1}}\) consists of three parts:
\begin{enumerate}
\item[(1)]
\(\Gamma_{\lambda, \vartheta, \vartheta_{1}}^{0} = {T}_{\lambda, \vartheta} \cap \partial {D}_{\lambda, \vartheta, \vartheta_{1}}\). This set is nonempty whenever \({D}_{\lambda, \vartheta, \vartheta_{1}}\) is nonempty.
\item[(2)]
\(\Gamma_{\lambda, \vartheta, \vartheta_{1}}^{1} = \big( \Gamma_{D} \cup \Gamma_{D}' \big) \cap \big( \partial {D}_{\lambda, \vartheta, \vartheta_{1}} \setminus {T}_{\lambda, \vartheta} \big).\) This portion is contained in \(\Gamma_{D}\) since we always assume \(\lambda > 0\) and \(\vartheta \in [\pi/2 - \alpha, \pi)\).
\item[(3)]
\(\Gamma_{\lambda, \vartheta, \vartheta_{1}}^{2} = \partial {D}_{\lambda, \vartheta, \vartheta_{1}} \setminus \big( \Gamma_{\lambda, \vartheta, \vartheta_{1}}^{0} \cup \Gamma_{\lambda, \vartheta, \vartheta_{1}}^{1} \big).\) The component \(\Gamma_{\lambda, \vartheta, \vartheta_{1}}^{2}\) is associated with the boundaries corresponding to \(\Gamma_{N} \cup {T}_{\lambda, \vartheta_{1}}\) and its reflection, and it further decomposes into
\begin{equation*}
\Gamma_{\lambda, \vartheta, \vartheta_{1}}^{2A} = \Gamma_{\lambda, \vartheta, \vartheta_{1}}^{2} \cap \big( \Gamma_{N}^{ - } \cup (\Gamma_{N}^{ - })' \cup {T}_{\lambda, \vartheta_{1}} \big) \quad
\text{and} \quad
\Gamma_{\lambda, \vartheta, \vartheta_{1}}^{2B} = \Gamma_{\lambda, \vartheta, \vartheta_{1}}^{2} \cap \big( \Gamma_{N}^{ + } \cup (\Gamma_{N}^{ + })' \big).
\end{equation*}
\end{enumerate}
Moreover, \(\Gamma_{\lambda, \vartheta, \vartheta_{1}}^{2B}\) is a line segment and is contained in \({T}_{\check{\lambda}, \check{\vartheta}} = \hat{T}_{\hat{\lambda}, \hat{\vartheta}}\). Here, \({T}_{\check{\lambda}, \check{\vartheta}} = \hat{T}_{\hat{\lambda}, \hat{\vartheta}}\) stands for the line associated with the reflection of the upper boundary \(\Gamma_{N}^{ + }\) with respect to \({T}_{\lambda, \vartheta}\), where
\begin{equation*}
\check{\vartheta} = 2\vartheta - \gamma, \quad \hat{\vartheta} = \pi - 2\vartheta + 2\gamma,
\end{equation*}
and where \(\hat{\lambda} = \hat{\lambda}(\lambda, \vartheta)\) and \(\check{\lambda} = \check{\lambda}(\lambda, \vartheta)\) depend on \(\lambda\) and \(\vartheta\) as
\begin{equation}
\hat{\lambda} = \frac{\lambda \sin\vartheta}{\sin(\vartheta - \gamma)}, \quad \check{\lambda} = \lambda + \frac{\lambda \sin\gamma}{\sin(2\vartheta - \gamma)}.
\end{equation}
Observe that \({D}_{\lambda, \vartheta, \vartheta_{1}}\) is either a triangle or a quadrilateral. Moreover, for \(\lambda \geq 0\) and any angles satisfying \(\pi \geq \vartheta > \vartheta_{1}' > \vartheta_{1} \geq 0\), we have
\begin{equation*}
{D}_{\lambda, \vartheta, \vartheta_{1}} \supset {D}_{\lambda, \vartheta, \vartheta_{1}'} \quad \text{and} \quad \Gamma_{\lambda, \vartheta, \vartheta_{1}}^{2B} \supset \Gamma_{\lambda, \vartheta, \vartheta_{1}'}^{2B}.
\end{equation*}
For simplicity, in what follows we omit the subscripts \(\vartheta, \vartheta_{1}\) and denote the boundary parts by \(\Gamma_{\lambda}^{0}\), \(\Gamma_{\lambda}^{1}\), \(\Gamma_{\lambda}^{2}\), \(\Gamma_{\lambda}^{2A}\), and \(\Gamma_{\lambda}^{2B}\).
As a consequence, \({w}^{\lambda, \vartheta}\) satisfies
\begin{equation*} 
\begin{cases}
\Delta {w}^{\lambda, \vartheta} + {c}^{\lambda, \vartheta}({x}) {w}^{\lambda, \vartheta} = 0 & \text{in } {D}_{\lambda, \vartheta, \vartheta_{1}},
\\
{w}^{\lambda, \vartheta} = 0 & \text{on } \Gamma_{\lambda}^{0},
\\
{w}^{\lambda, \vartheta} > 0 & \text{on } \Gamma_{\lambda}^{1} \quad \text{when } \vartheta > \pi/2 - \alpha,
\\
{w}^{\lambda, \vartheta} = 0 & \text{on } \Gamma_{\lambda}^{1} \quad \text{when } \vartheta = \pi/2 - \alpha,
\end{cases}
\end{equation*}
for \(\lambda > 0\) and
\begin{equation*}
\max\big\{\frac{\pi}{2} - \alpha, 0\big\} \leq \vartheta < \pi.
\end{equation*}
Here,
\begin{equation*}
{c}^{\lambda, \vartheta}({x}) =
\begin{cases}
\frac{{f}({u}^{\lambda, \vartheta}({x})) - {f}({u}({x}))}{{u}^{\lambda, \vartheta}({x}) - {u}({x})} & \text{if } {w}^{\lambda, \vartheta}({x}) \neq 0,
\\
0 & \text{if } {w}^{\lambda, \vartheta}({x}) = 0,
\end{cases}
\end{equation*}
is uniformly bounded with respect to \(\lambda\) and \(\vartheta\); that is, \(|{c}^{\lambda, \vartheta}({x})| < {c}_{0}\) for some constant \({c}_{0} > 0\).

Obtaining an \emph{a priori} boundary condition for \({w}^{\lambda, \vartheta}\) on \(\Gamma_{\lambda}^{2}\) is notably challenging and constitutes the main difficulty in proving the positivity of \({w}^{\lambda, \vartheta}\). The following lemma illustrates that the moving plane method can be initiated by establishing an \emph{a priori} boundary condition for \({w}^{\lambda, \vartheta}\) on \(\Gamma_{\lambda}^{2}\).

\begin{lma} \label{lma301}
Let \(\vartheta \in (0, \pi)\) be fixed with \(\vartheta \geq \pi/2 - \alpha\). Choose \(\vartheta_{1}\) such that \(\max\{2\vartheta - \pi, 0\} \leq \vartheta_{1} < \vartheta\), and let \(\Lambda \in (0, \lambda_{M}(\vartheta))\), where
\begin{equation*}
\lambda_{M}(\vartheta) = \sup\{\lambda \in \R: \; {T}_{\lambda, \vartheta} \cap \Omega = \emptyset\}.
\end{equation*}
Assume that, for every \(\lambda \geq \Lambda\), the following conditions hold:
\begin{subequations} \label{LY316}
\begin{align}
\label{LY316a}
\Delta {w}^{\lambda, \vartheta} + {c}^{\lambda, \vartheta}{w}^{\lambda, \vartheta} = 0 & \text{ in } {D}_{\lambda, \vartheta, \vartheta_{1}}
\\ \label{LY316b}
{w}^{\lambda, \vartheta} = 0 & \text{ on } \Gamma_{\lambda, \vartheta, \vartheta_{1}}^{0},
\\ \label{LY316c}
{w}^{\lambda, \vartheta} \geq 0 & \text{ on } \Gamma_{\lambda, \vartheta, \vartheta_{1}}^{1},
\\ \label{LY316d}
\nabla {w}^{\lambda, \vartheta} \cdot \nu \geq 0 & \text{ on } \Gamma_{\lambda, \vartheta, \vartheta_{1}}^{2},
\\ \label{LY316e}
{w}^{\lambda, \vartheta}\not \equiv 0 & \text{ in }
{D}_{\lambda, \vartheta, \vartheta_{1}}.
\end{align}
\end{subequations}
Then, \eqref{LY305b} and \eqref{LY303a} hold for every \(\lambda \geq \Lambda\).
\end{lma}

\begin{proof}
By the definition of the moving domain \({D}_{\lambda, \vartheta, \vartheta_{1}}\), the closure of the union of \({D}_{\lambda, \vartheta, \vartheta_{1}}\) and its reflection \(({D}_{\lambda, \vartheta, \vartheta_{1}})'\) (with respect to \({T}_{\lambda, \vartheta}\)) is convex. Hence, the second condition in \autoref{lma201} is satisfied. Let \(\eta > 0\) be the constant provided by \autoref{lma201}, so that the maximum principle holds for mixed boundary value problems in domains of sufficiently small width.

\textbf{Step 1: Initiation of the Moving Plane Method}.
By the definition of \({D}_{\lambda, \vartheta, \vartheta_{1}}\), there exists a constant \(\varepsilon_{1} > 0\) such that for every \({\lambda} \in ({\lambda}_{M}(\vartheta) - \varepsilon_{1}, {\lambda}_{M}(\vartheta) )\) we have
\begin{equation*}
{D}_{\lambda, \vartheta, \vartheta_{1}}\subset \big\{{x} \in \R^{2}: \; \operatorname{dist}({x}, {T}_{\lambda, \vartheta}) < {\eta}\big\}.
\end{equation*}
Then by applying the maximum principle with mixed boundary conditions (see \autoref{lma201}), one deduces that \({w}^{\lambda, \vartheta}\) is strictly positive in \({D}_{\lambda, \vartheta, \vartheta_{1}}\). Moreover, applying the Hopf boundary lemma to \({w}^{\lambda, \vartheta}\) yields \eqref{LY303a}. Therefore, \eqref{LY305b} and \eqref{LY303a} hold for all \({\lambda}\) in the range
\({\lambda}_{M}(\vartheta) - \varepsilon_{1} < {\lambda} < {\lambda}_{M}(\vartheta)\).

\textbf{Step 2: \eqref{LY305b} and \eqref{LY303a} hold for every \({\lambda} \geq \Lambda\)}.
We argue by contradiction. Define
\begin{equation*}
\bar{\lambda} = \inf\{\lambda' > 0: \; \text{\eqref{LY305b} and \eqref{LY303a} hold for every }\lambda \geq \lambda'\}.
\end{equation*}
Suppose that \(\bar{\lambda} > \Lambda\). By continuity, we have
\begin{equation*}
{w}^{\bar{\lambda}, \vartheta} \geq 0 \text{ in } {D}_{\bar{\lambda}, \vartheta, \vartheta_{1}}.
\end{equation*}
Then, by the strong maximum principle and the Hopf boundary lemma, we deduce that \({w}^{\bar{\lambda}, \vartheta}\) is strictly positive in \({D}_{\bar{\lambda}, \vartheta, \vartheta_{1}}\) and on the smooth portion of \(\Gamma_{\bar{\lambda}, \vartheta, \vartheta_{1}}^{2}\). Even though \(\Gamma_{\lambda, \vartheta, \vartheta_{1}}^{2}\) may contain non-smooth boundary points (namely, corners formed by two lines), one may verify (see \cite[Remark 1]{YCG21}) that \({w}^{\lambda, \vartheta}\) is also positive at these points. Hence, for every \({\lambda} \geq \Lambda\) one obtains
\begin{equation} \label{LY317a}
\begin{aligned}
{w}^{\lambda, \vartheta} > 0 & \text{ in } \overline{{D}_{\lambda, \vartheta, \vartheta_{1}}} \setminus {T}_{\lambda, \vartheta} \text{ if }\vartheta \in (\pi/2 - \alpha, \pi),
\\
{w}^{\lambda, \vartheta} > 0 & \text{ in }
\overline{{D}_{\lambda, \vartheta, \vartheta_{1}}} \setminus ({T}_{\lambda, \vartheta} \cup \Gamma_{\lambda, \vartheta, \vartheta_{1}}^{1}) \text{ if }\vartheta = \pi/2 - \alpha,
\end{aligned}
\end{equation}
and
\begin{equation*} 
\nabla {u} \cdot \mathbf{e}_{\vartheta + \alpha} > 0 \text{ on } {T}_{\bar{\lambda}, \vartheta} \cap \Omega.
\end{equation*}

\begin{figure}[h]\centering
\begin{tikzpicture}[scale = 1.6]
\pgfmathsetmacro\xab{3}; \pgfmathsetmacro\AngleA{60}; \pgfmathsetmacro\AngleB{45};
\pgfmathsetmacro\AngleC{180 - \AngleA - \AngleB};
\pgfmathsetmacro\yA{ - \xab*tan(90 - \AngleA)}; \pgfmathsetmacro\yB{\xab*tan(90 - \AngleB)};
\pgfmathsetmacro\xA{\xab}; \pgfmathsetmacro\xB{\xab};
\pgfmathsetmacro\PHI{\xab/sin(\AngleA)}; \pgfmathsetmacro\PSI{\xab/sin(\AngleB)};
\pgfmathsetmacro\LAMBDA{\PHI*0.1}; \pgfmathsetmacro\THETA{\AngleC*0.80};
\pgfmathsetmacro\THETAa{(90 - \AngleA)*1.2}; \pgfmathsetmacro\THETAc{2*\THETA - \THETAa};
\pgfmathsetmacro\xP{\LAMBDA*sin(\AngleA)}; \pgfmathsetmacro\yP{ - \LAMBDA*cos(\AngleA)};
\pgfmathsetmacro\llBa{(\PHI - \LAMBDA)*sin(\AngleA)/sin(\THETA + \AngleA)};
\pgfmathsetmacro\llBb{\LAMBDA*sin(\AngleC)/sin(\THETA - \AngleC)};
\pgfmathsetmacro\llB{max(\llBa, \llBb)}; 
\pgfmathsetmacro\xQ{\xP + \llB*cos(\THETA + \AngleA - 90)}; \pgfmathsetmacro\yQ{\yP + \llB*sin(\THETA + \AngleA - 90)};
\pgfmathsetmacro\llAa{(\PHI - \LAMBDA)*sin(\AngleA)/sin(\THETAa + \AngleA)};
\pgfmathsetmacro\llAb{\LAMBDA*sin(\AngleC)/sin(\THETAa - \AngleC)};
\pgfmathsetmacro\llAc{max(\llAa, \llAb)}; 
\pgfmathsetmacro\llCa{(\PHI - \LAMBDA)*sin(\AngleA)/sin(\THETAc + \AngleA)};
\pgfmathsetmacro\llCb{\LAMBDA*sin(\AngleC)/sin(\THETAc - \AngleC)};
\pgfmathsetmacro\llCc{min(\llCa, \llCb)}; 
\pgfmathsetmacro\llA{min(\llAc, \llCc)}; \pgfmathsetmacro\llC{min(\llAc, \llCc)};
\pgfmathsetmacro\xR{\xP + \llA*cos(\THETAa + \AngleA - 90)}; \pgfmathsetmacro\yR{\yP + \llA*sin(\THETAa + \AngleA - 90)};
\pgfmathsetmacro\xRR{\xP + \llC*cos(\THETAc + \AngleA - 90)}; \pgfmathsetmacro\yRR{\yP + \llC*sin(\THETAc + \AngleA - 90)};
\pgfmathsetmacro\llFa{\llB*sin(\THETA - \THETAa)/sin(\THETAa + \AngleA)};
\pgfmathsetmacro\llFt{\xab*(cot(\AngleA) + cot(\AngleB)) - (\PHI - \LAMBDA)*sin(\THETA)/sin(\THETA + \AngleA)};
\pgfmathsetmacro\llFb{\llFt*sin(\AngleB)/sin(180 - 2*\THETA - \AngleA + \AngleC)};
\pgfmathsetmacro\llF{min(\llFa, \llFb)};
\pgfmathsetmacro\xS{\xab}; \pgfmathsetmacro\yS{\yQ - \llF};
\pgfmathsetmacro\xSS{\xQ + \llF*cos(2*\THETA + 2*\AngleA - 90)}; \pgfmathsetmacro\ySS{\yQ + \llF*sin(2*\THETA + 2*\AngleA - 90)};
\fill[gray, green, draw = black, thin] (\xP, \yP) -- (\xQ, \yQ) -- (\xS, \yS) -- (\xR, \yR) -- cycle;
\draw[thin] (0, 0) node [left] {${z}_{0}$} -- (\xab, \yA) node [above right] {${z}_{1}$} -- (\xab, \yB) node [below right] {${z}_{2}$} -- cycle;
\draw[dotted] (\xP, \yP) -- ({\xP + 1.6*(\xR - \xP)}, {\yP + 1.6*(\yR - \yP)}) node [below] {\footnotesize $ {T}_{\bar{\lambda}, \vartheta_{1}}$};
\draw[dotted] (\xR, \yR) -- ({\xR + 1.9*(\xS - \xR)}, {\yR + 1.9*(\yS - \yR)}) node [above] {\footnotesize ${T}_{\check{\lambda}(\bar{\lambda}, \vartheta), \check{\vartheta}}$};
\draw[red!70]({\xP - 0.2*(\xQ - \xP)}, {\yP - 0.2*(\yQ - \yP)}) node [below] {\tiny ${T}_{\bar{\lambda}, \vartheta}$} -- ({\xP + 1.2*(\xQ - \xP)}, {\yP + 1.2*(\yQ - \yP)});
\pgfmathsetmacro\Dela{\xab*0.2}; \pgfmathsetmacro\Delb{\Dela*0.2};
\pgfmathsetmacro\Delc{(\Dela + \Delb)/2/sin(\THETA + \AngleA)};
\pgfmathsetmacro\Deld{(\Dela/sin(\THETA + \AngleA) - \Delc)*tan(180 - \THETA - \AngleA)*1/3};
\pgfmathsetmacro\xMc{\xQ - \Delb}; \pgfmathsetmacro\yMc{\yQ - \Delb*tan(\THETA + \AngleA - 90)};
\pgfmathsetmacro\xMca{\xMc}; \pgfmathsetmacro\yMca{\yMc - \Deld/sin(\THETA + \AngleA)};
\pgfmathsetmacro\xMcb{\xP + \Deld/sin(\THETA)*cos(\AngleA - 90)};
\pgfmathsetmacro\yMcb{\yP + \Deld/sin(\THETA)*sin(\AngleA - 90)};
\pgfmathsetmacro\xMcc{\xMc}; \pgfmathsetmacro\yMcc{ - \xMcc*cot(\AngleA)};
\fill[pattern color = orange!50, pattern = dots, draw = orange, dashed] (\xMca, \yMca) -- (\xMcb, \yMcb) -- (\xMcc, \yMcc) -- cycle;
\pgfmathsetmacro\xMb{\xQ - \Dela}; \pgfmathsetmacro\yMb{\yQ - \Dela*tan(\THETA + \AngleA - 90)};
\pgfmathsetmacro\xMba{\xMb}; \pgfmathsetmacro\yMba{\yMb + \Deld/sin(\THETA + \AngleA)};
\pgfmathsetmacro\xMbb{\xQ}; \pgfmathsetmacro\yMbb{\yQ + \Deld/sin(\THETA + \AngleA)};
\pgfmathsetmacro\xMbc{\xMb}; \pgfmathsetmacro\yMbc{ - \xMbc*cot(\AngleA)};
\pgfmathsetmacro\xMbd{\xA}; \pgfmathsetmacro\yMbd{\yA};
\draw[cyan, pattern color = cyan!60, pattern = north west lines] (\xMba, \yMba) -- (\xMbb, \yMbb) -- (\xMbd, \yMbd) -- (\xMbc, \yMbc) -- cycle;
\pgfmathsetmacro\xMa{\xQ - \Delc*sin(\THETA + \AngleA)}; \pgfmathsetmacro\yMa{\yQ + \Delc*cos(\THETA + \AngleA)};
\pgfmathsetmacro\xMaa{\xMa + \Deld*cos(\THETA + \AngleA)};
\pgfmathsetmacro\yMaa{\yMa + \Deld*sin(\THETA + \AngleA)};
\pgfmathsetmacro\xMab{\xMa - \Deld*cos(\THETA + \AngleA)};
\pgfmathsetmacro\yMab{\yMa - \Deld*sin(\THETA + \AngleA)};
\pgfmathsetmacro\xMac{\xP - \Deld/sin(\THETA)*cos(\AngleA - 90)};
\pgfmathsetmacro\yMac{\yP - \Deld/sin(\THETA)*sin(\AngleA - 90)};
\pgfmathsetmacro\xMad{\xP + \Deld/sin(\THETA)*cos(\AngleA - 90)};
\pgfmathsetmacro\yMad{\yP + \Deld/sin(\THETA)*sin(\AngleA - 90)};
\draw (\xMaa, \yMaa) -- (\xMab, \yMab);
\draw[very thin] ({\xMaa - 4.2*(\xMab - \xMaa)}, {\yMaa - 4.2*(\yMab - \yMaa)}) node[above] {\footnotesize ${L}_{0}$} -- ({\xMaa + 4.2*(\xMab - \xMaa)}, {\yMaa + 4.2*(\yMab - \yMaa)});
\fill[pattern color = blue!60, pattern = north east lines, draw = blue] (\xMaa, \yMaa) -- (\xMab, \yMab) -- (\xMad, \yMad) -- (\xMac, \yMac) -- cycle;
\pgfmathsetmacro\xPA{\xP - \Deld/2/sin(\THETA)*cos(\AngleA - 90)};
\pgfmathsetmacro\yPA{\yP - \Deld/2/sin(\THETA)*sin(\AngleA - 90)};
\pgfmathsetmacro\xQA{\xQ};
\pgfmathsetmacro\yQA{\yQ + \Deld/2/sin(\THETA + \AngleA)};
\draw[red, thick]({\xPA - 0.2*(\xQA - \xPA)}, {\yPA - 0.2*(\yQA - \yPA)}) node [above] {\tiny ${T}_{\lambda, \vartheta}$} -- ({\xPA + 1.2*(\xQA - \xPA)}, {\yPA + 1.2*(\yQA - \yPA)});
\node[orange, fill = white] at ({(\xMca + \xMcb + \xMcc)/3}, {(\yMca + \yMcb + \yMcc)/3}) [fill = white, rotate = 0] {\tiny $\mathcal{D}_{3}$ };
\node[cyan, fill = white] at ({(\xMba + \xMbd)/2}, {(\yMba + \yMbd)/2}) [fill = white, rotate = 0] {\tiny ${D}_{2}$ };
\node[blue, rotate = {\THETA + \AngleA - 90}, fill = white] at ({(\xMaa + \xMad)/2}, {(\yMaa + \yMad)/2}) {\tiny $\mathcal{D}_{1}$ };
\fill (\xQ, \yQ) circle (0.65pt) node [below = 2pt, right] {\footnotesize $\bar{x}$};
\draw[thin, ->] ({\xQ*1.2}, {\yQ*1.2}) -- ++ ({\THETA + \AngleA - 90}: {\xab*0.2}) node [above = 5pt, right = - 3ex] {\footnotesize $\mathbf{e}_{\vartheta + \alpha - \pi/2}$}; 
\draw[thin, ->] ({\xQ*1.2}, {\yQ*1.2}) -- ++ ({0}: {\xab*0.2}) node [right] {\footnotesize $\mathbf{e}_{0}$};
\draw[thin, ->] ({\xQ*1.2}, {\yQ*1.2}) -- ++ ({\THETA + \AngleA}: {\xab*0.2}) node [above = - 2pt] {\footnotesize $\mathbf{e}_{\vartheta + \alpha}$}; 
\end{tikzpicture} \vspace*{-1ex}
\caption{The domains $\color{blue}\mathcal{D}_{1}$, $\color{cyan}\mathcal{D}_{2}$ and $\color{orange}\mathcal{D}_{3}$ }
\label{fig32MMP}
\end{figure}

Assume that the intersection \(\Gamma_{D} \cap {T}_{\bar{\lambda}, \vartheta}\) is nonempty (the case \(\Gamma_{D} \cap {T}_{\bar{\lambda}, \vartheta} = \emptyset\) can be treated similarly). Define \({\bar{x}} = (\bar{x}_{1}, \bar{x}_{2})\) as the unique point in \(\Gamma_{D} \cap {T}_{\bar{\lambda}, \vartheta}\subset \partial {D}_{\bar{\lambda}, \vartheta, \vartheta_{1}}\), and set \({P}_{\bar{\lambda}} = (\bar{\lambda}\sin\alpha, - \bar{\lambda}\cos\alpha)\).
Fix any \(\delta_{1}, \delta_{2} \in (0, {\eta})\) with \(\delta_{2} < \delta_{1}\), and choose \(\delta_{3} > 0\) such that the intersection \({T}_{\bar{\lambda}, \vartheta} \cap {L}_{0}\) is contained in \(\Omega \cap \{{x}: \; - \delta_{1} < ({x} - {\bar{x}}) \cdot \mathbf{e}_{0} < - \delta_{2}\}\), where the line
\begin{equation*}
{L}_{0} = \{{x}: \; ({x} - {\bar{x}}) \cdot \mathbf{e}_{\vartheta + \alpha - \pi/2} = - \delta_{3} \}
\end{equation*}
is defined to be perpendicular to \({T}_{\bar{\lambda}, \vartheta}\). Since \eqref{LY303a} holds for \({\lambda} \geq \bar{\lambda}\), the continuity of \(\nabla {u}\) implies the existence of a small constant \(\delta_{4} \in (0, {\eta}/3)\) such that
\begin{equation*}
\nabla {u} \cdot \mathbf{e}_{\vartheta + \alpha} > 0 \text{ on } \big\{{x} \in {L}_{0}: \; |({x} - \bar{x}) \cdot \mathbf{e}_{\vartheta + \alpha}| \leq 3\delta_{4}\big\},
\end{equation*}
and hence,
\begin{equation} \label{LY318a}
{w}^{\lambda, \vartheta} > 0 \text{ in } {D}_{\lambda, \vartheta, \vartheta_{1}} \cap \big\{{x} \in {L}_{0}: \; |({x} - \bar{x}) \cdot \mathbf{e}_{\vartheta + \alpha}| \leq \delta_{4}\big\} \; \text{ for } |\lambda - \bar{\lambda}| < \delta_{4}.
\end{equation}
Set
\begin{align*}
\mathcal{D}_{1} & = \big\{{x} \in \Omega: \; \big|({x} - \bar{x}) \cdot \mathbf{e}_{\vartheta + \alpha}\big| \leq \delta_{4}, \quad ({x} - \bar{x}) \cdot \mathbf{e}_{\vartheta + \alpha - \pi/2} \leq - \delta_{3}\big\},
\\
\mathcal{D}_{2} & = \big\{{x} \in \Omega: \; ({x} - \bar{x}) \cdot \mathbf{e}_{\vartheta + \alpha} \leq \delta_{4}, \quad ({x} - \bar{x}) \cdot \mathbf{e}_{0} \geq - \delta_{1}\big\},
\\
\mathcal{D}_{3} & = \big\{{x} \in \Omega: \; ({x} - \bar{x}) \cdot \mathbf{e}_{\vartheta + \alpha} \leq - \delta_{4}, \quad ({x} - \bar{x}) \cdot \mathbf{e}_{0} \leq - \delta_{2}\big\}.
\end{align*}
It is clear that \(\mathcal{D}_{1}\cup\mathcal{D}_{2}\cup\mathcal{D}_{3} = \{{x} \in \Omega: \; ({x} - {P}_{\bar{\lambda}}) \cdot \mathbf{e}_{\vartheta + \alpha} \leq \delta_{4} \}\); see \autoref{fig32MMP}. In particular, from \eqref{LY317a} the function \({w}^{\bar{\lambda}, \vartheta}\) is positive in the compact set \(\overline{\mathcal{D}_{3}} \cap \overline{{D}_{\bar{\lambda}, \vartheta, \vartheta_{1}}}\). By continuity, there exists a small constant \(\varepsilon_{2} \in (0, \delta_{4})\) such that
\begin{equation} \label{LY318b}
{w}^{\lambda, \vartheta} > 0 \text{ in } \overline{{D}_{\lambda, \vartheta, \vartheta_{1}}} \cap \overline{\mathcal{D}_{3}} \quad \text{if } {\lambda} \in (\bar{\lambda} - \varepsilon_{2}, \bar{\lambda}).
\end{equation}
Now, let \({\lambda} \in (\bar{\lambda} - \varepsilon_{2}, \bar{\lambda})\). From \eqref{LY318a} and \eqref{LY318b}, the function \({w}^{\lambda, \vartheta}\) satisfies
\begin{equation} \label{LY319}
\begin{cases}
\Delta {w}^{\lambda, \vartheta} + {c}^{\lambda, \vartheta} {w}^{\lambda, \vartheta} = 0 & \text{in } {D}_{\lambda, \vartheta, \vartheta_{1}} \cap \mathcal{D}_{j},
\\
{w}^{\lambda, \vartheta} \geq 0, \quad {w}^{\lambda, \vartheta}\not\equiv 0 & \text{on } \partial ({D}_{\lambda, \vartheta, \vartheta_{1}} \cap \mathcal{D}_{j} ) \setminus \Gamma_{\lambda}^{2},
\\
\nabla {w}^{\lambda, \vartheta} \cdot {\nu} \geq 0 & \text{on } \partial ({D}_{\lambda, \vartheta, \vartheta_{1}} \cap \mathcal{D}_{j} ) \cap \Gamma_{\lambda}^{2},
\end{cases}
\end{equation}
for \(j = 1\). By applying the maximum principle (see \autoref{lma201}), we deduce that
\begin{equation} \label{LY318c}
{w}^{\lambda, \vartheta} > 0 \text{ in } {D}_{\lambda, \vartheta, \vartheta_{1}} \cap \mathcal{D}_{j},
\end{equation}
for \(j = 1\). Using this result together with \eqref{LY318a} and \eqref{LY318b}, one shows that \({w}^{\lambda, \vartheta}\) satisfies \eqref{LY319} for \(j = 2\). Hence, by the maximum principle from \autoref{lma201} we conclude that \eqref{LY318c} holds for \(j = 2\). Furthermore, the Hopf lemma implies that \eqref{LY305b} and \eqref{LY303a} remain valid for \({\lambda} \in (\bar{\lambda} - \varepsilon_{2}, \bar{\lambda})\).

This contradicts the definition of \(\bar{\lambda}\). Hence, we conclude that \(\bar{\lambda} \leq \Lambda\) and consequently, \eqref{LY305b} and \eqref{LY303a} hold for every \({\lambda} > \Lambda\). Moreover, by referring back to the argument at the beginning of Step 2 one deduces that \eqref{LY305b} and \eqref{LY303a} also hold for \({\lambda} = \Lambda\). This completes the proof.
\end{proof}

\begin{rmks}
The proof of the above lemma relies on the maximum principle for narrow domains, as stated in \autoref{lma201}.
However, a simpler argument can be obtained by applying the maximum principle established in \cite[Lemma 2.5]{Yao26} in domains of sufficiently small measure whose Neumann boundary lies in two hyperplanes.
Indeed, let \(\eta > 0\) be the small constant from \cite[Lemma 2.5]{Yao26}. Let \({K}\) be a fixed compact subset, and choose \(\varepsilon_{1} > 0\) such that
\begin{equation*}
{D}_{\lambda, \vartheta, \vartheta_{1}} \supset {K} \quad \text{and} \quad \big|{D}_{\lambda, \vartheta, \vartheta_{1}} \setminus {K}\big| < \eta \gamma \quad \text{for} \quad |\lambda - \bar{\lambda}| < \varepsilon_{1}.
\end{equation*}
By the positivity of \({w}^{\bar{\lambda}, \vartheta}\), there exists \(\varepsilon_{2} \in (0, \varepsilon_{1})\) such that
\begin{equation*}
{w}^{\lambda, \vartheta} > 0 \text{ in } {K} \quad \text{for every } \lambda \in [\bar{\lambda} - \varepsilon_{2}, \bar{\lambda} + \varepsilon_{2}].
\end{equation*}
Now, in the remaining subdomain \(\mathcal{D}: \; = {D}_{\lambda, \vartheta, \vartheta_{1}} \setminus {K}\), the function \({w}^{\lambda, \vartheta}\) satisfies
\begin{equation*}
\Delta {w}^{\lambda, \vartheta} + {c}^{\lambda, \vartheta} {w}^{\lambda, \vartheta} = 0 \text{ in } \mathcal{D}, \;
\partial_{\nu} {w}^{\lambda, \vartheta} \geq 0 \text{ on } \Gamma_{\lambda, \vartheta, \vartheta_{1}}^{2} \subset \partial \mathcal{D}, \;
{w}^{\lambda, \vartheta} \geq, \not\equiv 0 \text{ on } \partial \mathcal{D} \setminus \Gamma_{\lambda, \vartheta, \vartheta_{1}}^{2}.
\end{equation*}
Applying the maximum principle from \cite[Lemma 2.5]{Yao26}, we conclude that \({w}^{\lambda, \vartheta} \geq 0\) in \(\mathcal{D}\), and hence throughout the entire domain \({D}_{\lambda, \vartheta, \vartheta_{1}}\). Finally, by the strong maximum principle, it follows that both \eqref{LY305b} and \eqref{LY303a} hold for all \(\lambda \in [\bar{\lambda} - \varepsilon_{2}, \bar{\lambda}]\).
\end{rmks}

\begin{lma} \label{lma302}
Let
\begin{equation} \label{LY323a}
\Phi_{1} = \max\{\frac{1}{2}\Phi_{0}, \; \Psi_{0}\cos\gamma\} \quad \text{and} \quad \Psi_{1} = \max\{\frac{1}{2}\Psi_{0}, \; \Phi_{0}\cos\gamma\}.
\end{equation}
If \(\alpha \in (0, \pi/2)\), then \eqref{LY305a}, \eqref{LY303a} and
\begin{equation} \label{LY322a}
\nabla {u} \cdot \mathbf{e}_{\alpha - \pi/2} < 0 \text{ on } \Gamma_{N}^{ - } \cap {T}_{\lambda, \pi/2}
\end{equation}
hold for \(\lambda \geq \Phi_{1}\) and \(\vartheta = \pi/2\).
If \(\beta \in (0, \pi/2)\), then \eqref{LY303b} and
\begin{equation} \label{LY322b}
\nabla {u} \cdot \mathbf{e}_{\pi/2 - \beta} < 0 \text{ on } \Gamma_{N}^{ + } \cap \hat{T}_{\lambda, \pi/2}
\end{equation}
hold for \(\vartheta = \pi/2\) and \(\lambda \geq \Psi_{1}\).
\end{lma}

\begin{proof}
We only prove the case \(\alpha \in (0, \pi/2)\); the case \(\beta \in (0, \pi/2)\) follows analogously.

\textbf{Step 1: Monotonicity of \({u}\) in the interior of \(\Omega\)}.
Let \(\vartheta = \pi/2\) and set \(\vartheta_{1} = 0\) (see the left picture in \autoref{fig31MMP}). By the definition of \({\Phi}_{1}\) in \eqref{LY323a}, for every \({\lambda} \geq {\Phi}_{1}\) the set \(\Gamma_{\lambda, \vartheta, \vartheta_{1}}^{2}\) and its reflection
\((\Gamma_{\lambda, \vartheta, \vartheta_{1}}^{2} )'\) (with respect to \({T}_{\lambda, \vartheta}\)) are contained in \(\Gamma_{N}^{ - }\). Moreover,
\({w}^{\lambda, \vartheta} > 0\) on \(\Gamma_{\lambda, \vartheta, \vartheta_{1}}^{1}\). This implies that the conditions in \eqref{LY316} are fulfilled for \({\lambda} \geq {\Phi}_{1}\). Hence, by applying \autoref{lma301} we conclude that \eqref{LY305a} and \eqref{LY303a} hold for \(\vartheta = \pi/2\) and \({\lambda} \geq {\Phi}_{1}\).

\textbf{Step 2: Monotonicity of \({u}\) along the lower Neumann side}.
Since \({w}^{\lambda, \pi/2} > 0\), Serrin’s boundary point lemma yields the non-vanishing of the tangential derivative of \({u}\) along \(\Gamma_{N}^{ - }\); see also \autoref{lma205BP}. Hence, \eqref{LY322a} holds for every \(\lambda \geq \Phi_{1}\).
\end{proof}

For \(\alpha \in (0, \pi/2)\), it is straightforward to verify that
\begin{equation} \label{LY324a}
\Gamma_{\lambda, \vartheta, 0}^{2B} = \emptyset \quad \text{whenever } \vartheta \in [\pi/2 - \alpha, \alpha_{*}], \; \lambda \geq \Phi_{2},
\end{equation}
where the constants \(\alpha_{*}\) and \(\Phi_{2}\) are given by
\begin{align*}
& \alpha_{*} = \frac{\pi}{2} \text{ for } \alpha \in [\frac{\pi}{4}, \frac{\pi}{2}), \quad
\alpha_{*} = \frac{3\pi}{4} \text{ for } \alpha \in [\frac{\pi}{8}, \frac{\pi}{4}), \quad
\alpha_{*} = \pi - 2\alpha \text{ for } \alpha \in (0, \frac{\pi}{8}),
\\
& \Phi_{2} = \max\Big\{ \tfrac{\sin(\alpha_{*} - \gamma)}{\sin\alpha_{*}}\Psi_{0}, \quad \tfrac{1}{1 + \sin\gamma}\Phi_{0} \Big\}.
\end{align*}
Note that \(\pi - 2\alpha \leq \alpha_{*} < \pi - \alpha\) and \(0 < \Phi_{2} < \Phi_{0}\). Analogously, for \(\beta \in (0, \pi/2)\), we define:
\begin{align*}
& \beta_{*} = \frac{\pi}{2} \text{ for } \beta \in [\frac{\pi}{4}, \frac{\pi}{2}), \quad
\beta_{*} = \frac{3\pi}{4} \text{ for } \beta \in [\frac{\pi}{8}, \frac{\pi}{4}), \quad
\beta_{*} = \pi - 2\beta \text{ for } \beta \in (0, \frac{\pi}{8}),
\\
& \Psi_{2} = \max\Big\{\tfrac{\sin(\beta_{*} - \gamma)}{\sin\beta_{*}}\Phi_{0}, \quad \tfrac{1}{1 + \sin\gamma}\Psi_{0} \Big\}.
\end{align*}

\begin{lma} \label{lma303}
If \(\alpha \in (0, \pi/2)\), then \eqref{LY303a} holds for all \(\vartheta \in [\pi/2 - \alpha, \alpha_{*}]\) and \(\lambda \geq \Phi_{2}\).
Similarly, if \(\beta \in (0, \pi/2)\), then \eqref{LY303b} holds for all \(\vartheta \in [\pi/2 - \beta, \beta_{*}]\) and \(\lambda \geq \Psi_{2}\).
\end{lma}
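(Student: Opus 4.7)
By the symmetry interchanging the two Neumann sides (swap $\alpha\leftrightarrow\beta$), it suffices to prove the first assertion. Fix any $\vartheta\in[\pi/2-\alpha,\alpha_{*}]$. My plan is to apply \autoref{lma301} to the moving domain $D_{\lambda,\vartheta,0}$ (taking $\vartheta_{1}=0$) with $\Lambda=\Phi_{2}$, thereby deducing \eqref{LY303a} for every $\lambda\geq\Phi_{2}$. The crucial geometric input is identity \eqref{LY324a}, which under the stated constraints gives $\Gamma^{2B}_{\lambda,\vartheta,0}=\emptyset$. This removes the troublesome boundary piece associated with the reflection of the upper Neumann side, leaving $\Gamma^{2}=\Gamma^{2A}$ tied only to $\Gamma_{N}^{-}$ and its reflection; moreover, with $\vartheta_{1}=0$ the line $T_{\lambda,0}$ coincides with the line through $\Gamma_{N}^{-}$, so the moving domain has no spurious internal boundary.

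It remains to verify the hypothesis \eqref{LY316} of \autoref{lma301}. Parts (a), (b), (c), (e) are immediate: the equation holds in $D_{\lambda,\vartheta,0}$; $w^{\lambda,\vartheta}$ vanishes on $\Gamma^{0}\subset T_{\lambda,\vartheta}$ by reflection; the Dirichlet condition on $\Gamma_{D}$ forces $w^{\lambda,\vartheta}=u^{\lambda,\vartheta}\geq 0$ on $\Gamma^{1}$; and $w^{\lambda,\vartheta}\not\equiv 0$ since $u$ cannot be symmetric across $T_{\lambda,\vartheta}$. The essential content is (d), namely $\partial_{\nu}w^{\lambda,\vartheta}\geq 0$ on $\Gamma^{2A}$. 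On the portion of $\Gamma^{2A}$ lying in $\Gamma_{N}^{-}$ itself, the Neumann condition gives $\partial_{\nu}u=0$, and a direct reflection computation reduces (d) to the sign condition $\nabla u(x^{\lambda,\vartheta})\cdot e_{2\vartheta+\alpha}\geq 0$ to be verified at reflected points $x^{\lambda,\vartheta}$ on $(\Gamma_{N}^{-})'$; on the portion lying in $(\Gamma_{N}^{-})'$ the roles swap, giving $\partial_{\nu}u^{\lambda,\vartheta}=0$ by the Neumann condition at $x^{\lambda,\vartheta}\in\Gamma_{N}^{-}$ and reducing (d) to a corresponding sign condition on $\nabla u$ itself.

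To verify these sign conditions, I would bootstrap \autoref{lma302}. At $\vartheta=\pi/2$ condition (d) is automatic because reflection preserves $\Gamma_{N}^{-}$, and \autoref{lma302} already supplies the normal monotonicity \eqref{LY303a} on $T_{\lambda,\pi/2}$ together with the tangential monotonicity \eqref{LY322a} along $\Gamma_{N}^{-}$ for $\lambda\geq\Phi_{1}$. Using the decomposition $e_{2\vartheta+\alpha}=\cos(2\vartheta)\,e_{\alpha}+\sin(2\vartheta)\,e_{\alpha+\pi/2}$ and a continuity argument in $\vartheta$ rotating away from $\pi/2$ within $[\pi/2-\alpha,\alpha_{*}]$, one combines these two monotonicities to control the sign of $\nabla u\cdot e_{2\vartheta+\alpha}$ at the relevant boundary points. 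I expect the main obstacle to be precisely this verification of (d) for $\vartheta\neq\pi/2$, since $(\Gamma_{N}^{-})'$ no longer coincides with $\Gamma_{N}^{-}$ and the directional derivatives must be handled by carefully combining the two monotonicities of \autoref{lma302} with the geometric emptiness \eqref{LY324a}. Once (d) is established, \autoref{lma301} delivers \eqref{LY305b} and \eqref{LY303a} for all $\lambda\geq\Phi_{2}$, and the symmetric argument for $\beta\in(0,\pi/2)$ finishes the proof.
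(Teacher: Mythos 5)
Your reduction of the problem to hypothesis (d) of \autoref{lma301} is the right diagnosis, but the way you propose to verify it does not work, and this is precisely where the real content of the lemma lies. With $\vartheta_{1}=0$ the piece of $\Gamma^{2A}$ lying on $\Gamma_{N}^{-}$ reflects onto $(\Gamma_{N}^{-})'\subset T_{\lambda,2\vartheta}$, which consists of \emph{interior} points of $\Omega$, and the sign you need there is exactly \eqref{LY303a} at the angle $2\vartheta$ --- i.e.\ the conclusion of the very lemma being proved, at a \emph{larger} angle than $\vartheta$. Your proposed substitute, writing $e_{2\vartheta+\alpha}=\cos(2\vartheta)\,e_{\alpha}+\sin(2\vartheta)\,e_{\alpha+\pi/2}$ and combining \autoref{lma302} with continuity, fails: there is no sign information on $\nabla u\cdot e_{\alpha}$ (the direction normal to $\Gamma_{N}^{-}$) at interior points, the coefficient $\cos(2\vartheta)$ is negative for $\vartheta\in(\pi/4,\pi/2]$ in any case, and the tangential monotonicity \eqref{LY322a} is only available \emph{on} $\Gamma_{N}^{-}$, not at the reflected interior points; a continuity argument in $\vartheta$ can only give a non-strict inequality in a small neighbourhood of $\pi/2$, not the whole interval $[\pi/2-\alpha,\alpha_{*}]$. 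In addition, your blanket choice $\vartheta_{1}=0$ violates the admissibility constraint $\vartheta_{1}\geq\max\{0,2\vartheta-\pi\}$ of \autoref{lma301} as soon as $\vartheta>\pi/2$, which occurs on part of the range whenever $\alpha<\pi/4$ (where $\alpha_{*}=3\pi/4$ or $\pi-2\alpha$).

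The paper resolves the circularity by an induction on the angle rather than a single application of \autoref{lma301}: each step treats a new angle $\vartheta$ using auxiliary angles $\vartheta_{1}<\vartheta<\vartheta_{3}$ with $\vartheta_{1}+\vartheta_{3}=2\vartheta$ at which \eqref{LY303a} is \emph{already known}, and it is exactly this prior knowledge at $\vartheta_{1}$ and $\vartheta_{3}$ that supplies hypothesis (d) on $\Gamma^{2A}\subset T_{\lambda,\vartheta_{1}}$ and its reflection in $T_{\lambda,\vartheta_{3}}$. Concretely: Step 1 pushes from the known angles $\pi/2$ (from \autoref{lma302}) and $\pi$ (where the reflected boundary lands back on the Neumann line, so the sign condition is trivial) up to $\alpha_{*}$ by a dyadic bisection; Step 2 then uses $\vartheta_{1}=0$, $\vartheta_{3}=\alpha_{*}$ together with \eqref{LY324a} only at the single angle $\vartheta=\alpha_{*}/2$, and fills in $[\pi/2-\alpha,\alpha_{*}]$ by further bisection. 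To repair your proposal you would need to reproduce this ordering of the angles (top end first, then descend), since without it the boundary condition on $(\Gamma_{N}^{-})'$ is simply not available.
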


\begin{proof}
We focus on the case \(\alpha < \pi/2\); the case \(\beta < \pi/2\) follows analogously.

\textbf{Step 1}.
We first claim that \eqref{LY303a} holds for \(\lambda \geq \Phi_{2}\) and \(\vartheta = \alpha_{*}\).
Indeed, by \autoref{lma302} and the definition of \(\alpha_{*}, \Phi_{2}\), the claim is already valid for \(\alpha \in [\pi/4, \pi/2)\). It thus remains to consider \(\alpha < \pi/4\).

\textbf{Step 1.1}: In the case \(\alpha < \pi/4\), \eqref{LY303a} holds for \(\lambda \geq \Phi_{2}\) and \(\vartheta \in [\pi/2, 3\pi/4]\).
Define
\begin{equation*}
{J}_{k}: = \Big\{ \frac{\pi}{2} + \frac{j\pi}{2^{k + 1}}: \; j = 0, 1, \dotsc, 2^{k - 1} \Big\}, \quad {J}_{ \infty}: \; = \bigcup_{k = 1}^{ \infty} {J}_{k}.
\end{equation*}
Let \(\vartheta = 3\pi/4\), with \(\vartheta_1 = \pi/2\), \(\vartheta_3 = \pi\). By \autoref{lma302} and \eqref{LY324a}, the boundary conditions \eqref{LY316} are satisfied for \(\lambda \geq \Phi_2\), and \autoref{lma301} applies. Therefore, \eqref{LY303a} holds for all \(\vartheta \in {J}_1\). Assume \eqref{LY303a} holds for all \(\vartheta \in {J}_{k}\). Take any \(\vartheta \in {J}_{k + 1} \setminus {J}_{k}\), and write
\begin{equation*}
\vartheta = \frac{\pi}{2} + (2j + 1)\frac{\pi}{2^{k + 2}}, \quad \text{with } \vartheta_1 = \frac{\pi}{2} + \frac{j\pi}{2^{k + 1}}, \quad \vartheta_3 = \frac{\pi}{2} + \frac{(j + 1)\pi}{2^{k + 1}}.
\end{equation*}
Then \(\vartheta_1, \vartheta_3 \in {J}_{k}\) and \(\vartheta_3 - \vartheta = \vartheta - \vartheta_1 > 0\). Hence, \eqref{LY316} is satisfied and by \autoref{lma301}, \eqref{LY303a} holds for \(\vartheta \in {J}_{k + 1}\). By induction, \eqref{LY303a} holds for all \(\vartheta \in {J}_{ \infty}\).

Since \({J}_{ \infty}\) is dense in the interval \([\pi/2, 3\pi/4]\) and \(\nabla {u}\) is continuous, it follows that
\begin{equation} \label{LY326}
\nabla {u} \cdot \mathbf{e}_{\vartheta + \alpha} \geq 0 \text{ on } \Omega \cap {T}_{\lambda, \vartheta}
\end{equation}
holds for all \(\vartheta \in [\pi/2, 3\pi/4]\) and \({\lambda} \geq {\Phi}_{2}\). To obtain strict positivity, fix any \(\vartheta \in (\pi/2, 3\pi/4)\), and choose \(\vartheta_{1} \in [\pi/2, 3\pi/4]\), \(\vartheta_{3} \in {J}_{ \infty}\) with \(\vartheta_{3} - \vartheta = \vartheta - \vartheta_{1} > 0\). Then \eqref{LY316} holds and \autoref{lma301} yields strict monotonicity. Hence, \eqref{LY303a} holds for all \(\vartheta \in [\pi/2, 3\pi/4]\), concluding Step 1.1. In particular, this completes Step 1 for the case \(\alpha \in [\pi/8, \pi/4)\).

\textbf{Step 1.2}: In the case \(\alpha \in (0, \pi/8)\), \eqref{LY303a} holds for \(\lambda \geq \Phi_{2}\) and \(\pi/2 \leq \vartheta \leq \pi - 2\alpha\). For this purpose, define for \(k \geq 1\) the sets
\begin{equation*}
\tilde{J}_{k} = \big[\frac{\pi}{2}, \pi - \frac{\pi}{2^{k + 1}}\big] \cap \big[\frac{\pi}{2}, \pi - 2\alpha\big].
\end{equation*}
From Step 1.1, we have already established \eqref{LY303a} for \(\vartheta \in \tilde{J}_{1}\). Assume that \eqref{LY303a} holds for all \(\vartheta \in \tilde{J}_{k}\) for some \(k\ge1\). Let \(\vartheta \in \tilde{J}_{k + 1} \setminus \tilde{J}_{k}\) and set \(\vartheta_{3} = \pi\) and \(\vartheta_{1} = 2\vartheta - \vartheta_{3}\). Then \(\vartheta_{1} \in \tilde{J}_{k}\), and \eqref{LY316} is valid for \(\lambda \geq \Phi_{2}\). By \autoref{lma301} it follows that \eqref{LY305a} and \eqref{LY303a} \(\lambda \geq \Phi_{2}\). By induction, we deduce that \eqref{LY303a} holds for all \(\vartheta \in \cup _{k = 1}^{ \infty}\tilde{J}_{k} = [\pi/2, \pi - 2\alpha]\). This concludes Step 1.

\textbf{Step 2}.
We claim that \eqref{LY303a} holds for \(\lambda \geq \Phi_{2}\) and \(\vartheta \in [\pi/2 - \alpha, \alpha_{*}]\).

\textbf{Step 2.1}: This step holds for \(\vartheta = \alpha_{*}/2\). Since \(\alpha_{*} \geq \pi - 2\alpha\), we have \(\alpha_{*}/2 \geq \pi/2 - \alpha\). For \(\vartheta = \alpha_{*}/2\), set \(\vartheta_{1} = 0\) and \(\vartheta_{3} = \alpha_{*}\). Then, for \({\lambda} \geq {\Phi}_{2}\), by \eqref{LY324a} the function \({w}^{\lambda, \vartheta}\) satisfies \eqref{LY316}. Applying \autoref{lma301}, we conclude that \eqref{LY305a} and \eqref{LY303a} hold for \(\vartheta = \alpha_{*}/2\) and \(\lambda \geq \Phi_{2}\).

\textbf{Step 2.2}: This step holds for \(\vartheta \in [\alpha_{*}/2, \alpha_{*}]\). The argument proceeds analogously to Step 1.1, so we omit further details.

\textbf{Step 2.3}: This step holds for \(\vartheta \in [\pi/2 - \alpha, \alpha_{*}]\). This case is handled similarly to Step 1.2; details are omitted for brevity.
This completes the proof.
\end{proof}

\begin{figure}[h]\centering
\begin{tikzpicture}[scale = 2]
\pgfmathsetmacro\AngleA{60}; \pgfmathsetmacro\AngleB{40}; \pgfmathsetmacro\xab{1.0};
\pgfmathsetmacro\AngleC{180 - \AngleA - \AngleB}; \pgfmathsetmacro\yA{ - \xab*tan(90 - \AngleA)}; \pgfmathsetmacro\yB{\xab*tan(90 - \AngleB)}
\pgfmathsetmacro\PHI{\xab/sin(\AngleA)}; \pgfmathsetmacro\PSI{\xab/sin(\AngleB)};
\pgfmathsetmacro\THETA{90 + 0.8*(90 - \AngleA)};
\pgfmathsetmacro\LAMBDA{\PHI + 0.80*(\PSI*cos(\AngleC) - \PHI)};
\pgfmathsetmacro\xP{\LAMBDA*sin(\AngleA)}; \pgfmathsetmacro\yP{ - \LAMBDA*cos(\AngleA)};
\pgfmathsetmacro\xQ{\xab}; \pgfmathsetmacro\yQ{\yA + (\PHI - \LAMBDA)/cos(\AngleA)};
\pgfmathsetmacro\xPP{\xP + (\PHI - \LAMBDA)*tan(\AngleA)/tan(180 - \THETA)*cos(\AngleA - 90)};
\pgfmathsetmacro\yPP{\yP + (\PHI - \LAMBDA)*tan(\AngleA)/tan(180 - \THETA)*sin(\AngleA - 90)};
\pgfmathsetmacro\THETAa{\THETA*2 - 180 + \AngleA};
\pgfmathsetmacro\xR{\xP + (\PHI - \LAMBDA)*tan(\AngleA)/tan(180 - \THETAa)*cos(\AngleA - 90)};
\pgfmathsetmacro\yR{\yP + (\PHI - \LAMBDA)*tan(\AngleA)/tan(180 - \THETAa)*sin(\AngleA - 90)};
\pgfmathsetmacro\xS{\xab};
\pgfmathsetmacro\yS{\yQ - (\PHI - \LAMBDA)*tan(\AngleA)/sin(180 - \THETAa)};
\fill[gray, yellow, draw = black] (\xPP, \yPP) -- (\xQ, \yQ) -- (\xR, \yR) -- cycle;
\fill[gray, green, draw = black] (\xPP, \yPP) -- (\xQ, \yQ) -- (\xS, \yS) -- cycle;
\draw[] (0, 0) node [left] {${z}_{0}$} -- (\xab, \yA) node [right] {${z}_{1}$} -- (\xab, \yB) node [right] {${z}_{2}$} -- cycle;
\draw[red] ({\xQ + 0.2*(\xQ - \xP)}, {\yQ + 0.2*(\yQ - \yP)}) -- ({\xP - 0.2*(\xQ - \xP)}, {\yP - 0.2*(\yQ - \yP)}) node [below] {${T}_{\phi, \pi/2}$};
\draw[red, very thick] ({\xQ + 0.2*(\xQ - \xPP)}, {\yQ + 0.2*(\yQ - \yPP)}) -- ({\xPP - 0.15*(\xQ - \xPP)}, {\yPP - 0.15*(\yQ - \yPP)}) node [below = -2pt] {${T}_{\Upsilon, \vartheta}$};
\end{tikzpicture} \vspace*{-2ex}
\caption{The moving domain ${D}_{\Upsilon, \vartheta}$ when the slope of ${T}_{\Upsilon, \vartheta}$ $\vartheta$ is positive and large}
\label{fig33theta0}
\end{figure}
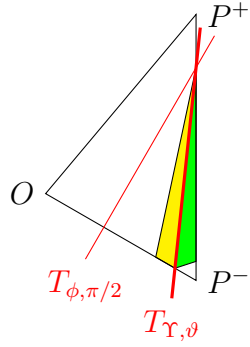

A straightforward computation shows that the three lines \({T}_{\Upsilon, \vartheta}\), \({T}_{\phi, \pi/2}\) and \({T}_{\Phi_{0}, \pi - \alpha} = \{{x}: \; x_{1} = 1\}\) intersect at a common point if and only if
\begin{equation} \label{LY328}
\Upsilon = \Upsilon_{\phi, \vartheta} = \phi + (\Phi_{0} - \phi)\tan\alpha\cot(\pi - \vartheta).
\end{equation}

\begin{lma} \label{lma304}
(1) Let \(\alpha \in (0, \pi/2)\). Suppose that \eqref{LY303a} holds for \(\vartheta = \pi/2\) and \(\lambda \geq \Phi\) for some \(\Phi \in [\max\{\Psi_{0} \cos\gamma, 0\}, \Phi_{0})\). Then \eqref{LY303a} also holds for
\begin{equation*}
\vartheta \in [\pi/2, \pi - \alpha) \quad \text{and} \quad \lambda \geq \Upsilon_{\Phi, \vartheta},
\end{equation*}
where \(\Upsilon_{\phi, \vartheta}\) is defined in \eqref{LY328}.

(2) Let \(\beta \in (0, \pi/2)\). Suppose that \eqref{LY303b} holds for \(\vartheta = \pi/2\) and \(\lambda \geq \Psi\) for some \(\Psi \in [\max\{\Phi_{0} \cos\gamma, 0\}, \Psi_{0})\). Then \eqref{LY303b} also holds for
\begin{equation*}
\vartheta \in [\pi/2, \pi - \beta) \quad \text{and} \quad \lambda \geq \Psi + (\Psi_{0} - \Psi)\tan\beta\cot(\pi-\vartheta).
\end{equation*}
\end{lma}

\begin{proof}
We focus on the first assertion; the second one can be proved in the same way. Note that \({T}_{\lambda, \pi/2} \cap \Gamma_{D} \neq \emptyset\) if and only if \(\lambda \in [\Psi_{0} \cos\gamma, \Phi_{0}]\).  

\textbf{Step 1}.
We first claim that \eqref{LY305a} and \eqref{LY303a} hold for all \((\vartheta, {\lambda})\) satisfying
\begin{equation} \label{LY329a}
\vartheta \in [\bar{\vartheta}, \pi - \alpha ) \quad \text{and} \quad \lambda \geq {\Psi}_{0}\sin\bigl(\vartheta - \gamma\bigr)\csc\vartheta,
\end{equation}
where \(\bar{\vartheta} < \pi - \alpha\) is any fixed number satisfying
\begin{equation} \label{LY329b}
2\bar{\vartheta} - \gamma \geq \pi - \alpha, \quad 2\bar{\vartheta} - \pi \geq \pi/2 - \alpha, \quad \text{and} \quad {\Psi}_{0}\sin\bigl(\bar{\vartheta} - \gamma\bigr)
\csc\bar{\vartheta} \geq {\Phi}_{2}.
\end{equation}
Under the condition \eqref{LY329a} (with the choice \(\vartheta_{3} = \pi\)) the definition \eqref{LY324a} implies that
\begin{equation*}
\Gamma_{\lambda, \vartheta, \vartheta_{1}}^{2B} = \emptyset \quad \text{and} \quad
\Gamma_{\lambda, \vartheta, \vartheta_{1}}^{2A}\subset {T}_{\lambda, 2\vartheta - \pi},
\end{equation*}
with
\begin{equation*}
2\vartheta - \pi \in [\pi/2 - \alpha, \alpha_{*}] \quad \text{and} \quad \lambda \geq {\Phi}_{2}.
\end{equation*}
See \autoref{fig33theta0}. Hence, by applying \autoref{lma303} we deduce that \eqref{LY305a} and \eqref{LY303a} hold under \eqref{LY329a}.

\textbf{Step 2}.
For any fixed \({\bar{\phi}} \in \bigl[{\Phi}, {\Phi}_{0}\bigr)\), let
\({\bar{x}} = ({\bar{x}}_{1}, {\bar{x}}_{2})\) be the unique intersection point of the line \({T}_{{\bar{\phi}}, \pi/2}\) and \(\Gamma_{D}\). We aim to establish that
\begin{equation} \label{LY330}
\nabla {u} \cdot \mathbf{e}_{\vartheta + \alpha} > 0 \text{ on } \Omega \cap \{ {x}: \; ({x} - {\bar{x}}) \cdot \mathbf{e}_{\vartheta + \alpha} = 0 \}
\end{equation}
for every \(\vartheta \in [\pi/2, \bar{\vartheta}]\). To prove this, consider the angular derivative of \({u}\) about \({\bar{x}}\), defined by
\begin{equation} \label{LY331a}
({R}_{\bar{x}}{u})({x}) = ({x}_{1} - \bar{x}_{1}) \partial_{x_{2}}{u}({x}) - ({x}_{2} - \bar{x}_{2}) \partial_{x_{1}}{u}({x}),
\end{equation}
in the domain
\begin{equation*}
\mathcal{D} = \bigl\{{x} \in \Omega: \; ({x} - {\bar{x}}) \cdot \mathbf{e}_{\pi/2 + \alpha} < 0, \quad ({x} - {\bar{x}}) \cdot \mathbf{e}_{\bar{\vartheta} + \alpha} > 0\bigr\}.
\end{equation*}
Then \({R}_{\bar{x}}{u} \in {C}^{1}(\overline{\mathcal{D}})\) satisfies the corresponding linearized equation in the weak sense:
\begin{equation} \label{LY331b}
[\Delta + {f}'({u})] {R}_{\bar{x}}{u} = 0 \text{ in } \mathcal{D} \quad \text{and} \quad {R}_{\bar{x}}{u} \leq, \not \equiv 0 \text{ on } \partial\mathcal{D},
\end{equation}
where \({f}'({u})\) is defined a.e. in \(\Omega\) and belongs to \(L^{\infty}(\Omega)\) (since \({f} \in \mathrm{Lip}_{loc}(\mathbb R)\) and \({u}\) is bounded), and the boundary inequality in \eqref{LY331b} follows from Step~1 and \autoref{lma302}. Furthermore, since
\begin{equation*}
[\Delta + {f}'({u})] (\nabla {u} \cdot \mathbf{e}_{\pi/2 + \alpha}) = 0 \text{ in } \mathcal{D} \quad \text{and} \quad \nabla {u} \cdot \mathbf{e}_{\pi/2 + \alpha} > 0 \text{ in } \overline{\mathcal{D}} \setminus \partial\Omega,
\end{equation*}
we apply the maximum principle (see, e.g., \cite{BNV94}) to \eqref{LY331b} and deduce that \({R}_{\bar{x}}{u} < 0\) in \(\mathcal{D}\), hence establishing \eqref{LY330}.

Combining the results of Step 1 and Step 2, we conclude that \eqref{LY330} holds for any \(\vartheta \in [\pi/2, \pi - \alpha)\) and for any \({\bar{x}} \in \bigl( \Gamma_{D} \cap \{ {x}: \; {x} \cdot \mathbf{e}_{\alpha - \pi/2} \geq {\Phi}\}\bigr)\). In particular, this implies that \eqref{LY303a} holds for
\(\vartheta \in [\pi/2, \pi - \alpha)\) and \(\lambda \geq \Upsilon_{\Phi, \vartheta}\).
This completes the proof.
\end{proof}

\begin{lma} \label{lma305}
Let \(\gamma \in (0, \pi/2]\).

(i)
If \(\alpha \in (0, \pi/2)\), then \eqref{LY305a} and \eqref{LY303a} hold for \(\vartheta = \pi/2\) and all \(\lambda \geq \Psi_{0}\cos\gamma\). Moreover, \eqref{LY303b} holds for \(\vartheta \in (\pi - \beta, \pi/2 + \gamma]\) and \(\lambda \geq \Psi_{0}\).

(ii)
If \(\beta \in (0, \pi/2)\), then \eqref{LY303b} holds for \(\vartheta = \pi/2\) and all \(\lambda \geq \Phi_{0}\cos\gamma\). Moreover, \eqref{LY303a} holds for
\(\vartheta \in (\pi - \alpha, \pi/2 + \gamma]\) and \(\lambda \geq \Phi_{0}\).
\end{lma}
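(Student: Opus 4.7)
The plan is to prove both parts by continuously extending the monotonicity statements of \autoref{lma302}, \autoref{lma303} and \autoref{lma304} using the moving-domain framework of \autoref{lma301}. Part (ii) is obtained from part (i) by the interchange $\alpha \leftrightarrow \beta$ and the corresponding swap of the two Neumann sides, so I focus on (i). The non-obtuse Neumann vertex condition $\gamma \in (0, \pi/2]$ enters through the identity $\Psi_{0}\cos\gamma = \sin\alpha - \cos\alpha\cot\beta$, which is precisely the value of $\lambda$ at which the moving line $T_{\lambda, \pi/2}$ first meets the vertex $B$; for $\lambda \geq \Psi_{0}\cos\gamma$ the line $T_{\lambda, \pi/2}$ does not cross $\Gamma_{N}^{+}$.

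For the first claim in (i), the range $\lambda \geq \Phi_{1} = \max\{\Phi_{0}/2, \Psi_{0}\cos\gamma\}$ is already given by \autoref{lma302}, so it remains to treat the interval $[\Psi_{0}\cos\gamma, \Phi_{0}/2)$, which is nonempty only when $\gamma$ is close to $\pi/2$. In this regime the reflected image of $\Gamma_{N}^{+}$ across $T_{\lambda, \pi/2}$ intrudes into the right cap starting from the reflected Neumann vertex $O^{\lambda, \pi/2} = P_{2\lambda} \in \Gamma_{N}^{-}$, so the choice $\vartheta_{1} = 0$ used in the proof of \autoref{lma302} fails here. I would instead apply \autoref{lma301} with $\vartheta = \pi/2$ and a suitably chosen auxiliary angle $\vartheta_{1} = \vartheta_{1}(\lambda) > 0$ so that the trimmed moving domain ${D}_{\lambda, \pi/2, \vartheta_{1}}$ avoids the intrusion of $(\Gamma_{N}^{+})'$; the required Neumann-type inequality on the new trimming piece lying on $T_{\lambda, \vartheta_{1}}$ is then furnished by \eqref{LY303a} at angle $\vartheta_{1}$, which is available from \autoref{lma303} and \autoref{lma304}. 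A continuous sweep as in \autoref{lma301} slides $\lambda$ from $\Phi_{1}$ down to $\Psi_{0}\cos\gamma$.

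For the second claim in (i), a direct calculation solving for the intersection of $\hat{T}_{\lambda, \pi/2 + \gamma}$ with the line containing $\Gamma_{N}^{-}$ yields $\hat{T}_{\lambda, \pi/2 + \gamma} = T_{\lambda\cos\gamma, \pi/2}$; since $\lambda\cos\gamma \geq \Psi_{0}\cos\gamma$ whenever $\lambda \geq \Psi_{0}$, the first claim already supplies \eqref{LY303b} at the endpoint $\vartheta = \pi/2 + \gamma$. At the other extreme $\vartheta \to (\pi - \beta)^{+}$, the line $\hat{T}_{\lambda, \vartheta}$ is nearly vertical and \eqref{LY303b} reduces to $\partial_{x_{1}} u < 0$ near $\Gamma_{D}$, which is provided by \autoref{lma203GNN}. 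The intermediate angles are handled by rotating $\vartheta$ continuously through $(\pi - \beta, \pi/2 + \gamma)$ with $\lambda \geq \Psi_{0}$ fixed and invoking the $\hat{T}$-version of \autoref{lma301} (obtained via the $\alpha \leftrightarrow \beta$ relabelling) at each step, using the monotonicity established at neighbouring angles as the Neumann-type input on the auxiliary boundary. The condition $\lambda \geq \Psi_{0}$ ensures $Q_{\lambda}$ lies on or past $B$, which keeps the geometry of the $\hat{T}$-moving domain tractable.

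The principal obstacle I expect is the verification of $\nabla {w} \cdot \nu \geq 0$ on the lateral boundary $\Gamma_{\lambda}^{2}$ of the trimmed moving domain, since $\Gamma_{\lambda}^{2}$ may consist of several pieces coming from $\Gamma_{N}^{\pm}$, their reflections and one or two auxiliary lines, and the geometric picture changes as $\lambda$ crosses the critical value $\Phi_{0}/2$. The resolution is a careful $\lambda$-dependent choice of the trimming angle $\vartheta_{1}$ together with an inductive sweep in the angular parameter, feeding back the monotonicity statements already established in \autoref{lma302}, \autoref{lma303} and \autoref{lma304} as boundary data for \autoref{lma301} at each successive stage.
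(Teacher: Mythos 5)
Your reduction of (ii) to (i), and your observation that $\hat{T}_{\lambda, \pi/2 + \gamma} = T_{\lambda\cos\gamma, \pi/2}$ so that the ``Moreover'' family is just the reparametrized family of $T$-lines passing through or beyond the vertex $B$ (with $\lambda = \Psi_0$ corresponding exactly to lines through $B$, since $\Psi_0\cos\gamma$ is the projection parameter of $B$), are both correct and consistent with the paper. But the core of your argument for the first claim has a genuine gap. You propose to reach $\lambda$ down to $\Psi_0\cos\gamma$ by trimming the moving domain with an auxiliary line $T_{\lambda,\vartheta_1}$, $\vartheta_1(\lambda)>0$, ``so that the trimmed moving domain avoids the intrusion of $(\Gamma_N^+)'$''. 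This fails geometrically and logically in exactly the regime that matters, namely $\gamma > \alpha$ (not ``$\gamma$ close to $\pi/2$'': the gap $[\Psi_0\cos\gamma, \Phi_0/2)$ is nonempty precisely when $\gamma>\alpha$). For $\vartheta=\pi/2$ the reflected upper side lies on $T_{2\lambda,\pi-\gamma}$ and runs from $P_{2\lambda}\in\Gamma_N^-$ up to the reflected vertex $B'$, whose distance to the moving line $T_{\lambda,\pi/2}$ tends to $0$ as $\lambda\downarrow\Psi_0\cos\gamma$; a trimming line through $P_\lambda$ can only swallow this segment if $\vartheta_1\to\pi/2$, and the Neumann-type input you would then need on $T_{\lambda,\vartheta_1}$ (and on its mirror $T_{\lambda,2\vartheta-\vartheta_1}$) is \eqref{LY303a} at angles near $\pi/2$ for $\lambda$ near $\Psi_0\cos\gamma$ --- which is essentially the statement being proved. \autoref{lma303} only supplies such input for $\lambda\geq\Phi_2$ (and $\Phi_2\geq\Phi_0/(1+\sin\gamma)$ can exceed $\Psi_0\cos\gamma$), while \autoref{lma304} is conditional on the very $\vartheta=\pi/2$ statement at the lower threshold. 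Likewise, ``a continuous sweep as in \autoref{lma301} slides $\lambda$ from $\Phi_1$ down to $\Psi_0\cos\gamma$'' is not a mechanism: \autoref{lma301} requires the hypothesis \eqref{LY316} on $\Gamma^2_{\lambda,\vartheta,\vartheta_1}$ for all $\lambda$ above the target, and producing that hypothesis below $\Phi_1$ is precisely the open point.

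What the paper does instead --- and what is missing from your proposal --- is to keep the intruding piece $\Gamma^{2B}$ and feed it the needed inequality from monotonicity already known at the larger parameter: for $\vartheta=\pi/2$ one has $\Gamma^{2B}\subset T_{2\lambda,\pi-\gamma}$ with $\pi-\gamma\in[\pi/2,\pi-\alpha)$, so \eqref{LY303a} at angle $\pi-\gamma$ and parameter $2\lambda$ suffices. Combining this with \autoref{lma304} (which converts a $\vartheta=\pi/2$ threshold $\Phi$ into the full angular range $[\pi/2,\pi-\alpha)$ for $\lambda\geq\Upsilon_{\Phi,\vartheta}$) yields an iteration $\Lambda_0=\Phi_0$, $2\Lambda_{j+1}=\Upsilon_{\Lambda_j,\pi-\gamma}$, which is strictly decreasing with limit exactly $\Psi_0\cos\gamma$; when $\gamma\leq\alpha$ no iteration is needed since then $\Phi_1=\Psi_0\cos\gamma$ and \autoref{lma302}, \autoref{lma304} already conclude. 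Finally, the borderline case (lines exactly through $B$, i.e.\ $\lambda=\Psi_0\csc\vartheta\sin(\vartheta-\gamma)$, equivalently $\lambda=\Psi_0$ in the $\hat T$-parametrization) does not follow from a limiting sweep alone: the paper upgrades the nonstrict inequality obtained by continuity to a strict one via the angular derivative $R_{\bar x}u$ about $\bar x=(1,\cot\beta)$ and the strong maximum principle. Your treatment of the ``Moreover'' range by rotating $\vartheta$ at fixed $\lambda\geq\Psi_0$ with \autoref{lma203GNN} at the endpoint is both vague (the chords near $\vartheta=\pi-\beta$ are not uniformly inside the region where that local lemma applies, in particular near the corners) and unnecessary once the first claim is established, since \autoref{lma304} with $\Phi=\Psi_0\cos\gamma$ plus the angular-derivative step already delivers it.
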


\begin{proof}
We focus on the case \(\alpha < \pi/2\); the case \(\beta < \pi/2\) follows similarly. Suppose first that \(\gamma \leq \alpha\). Then from \eqref{LY323a}, we have \(\Phi_{1} = \Psi_{0}\cos\gamma\). Thus, the conclusion follows from \autoref{lma302} and \autoref{lma304}.
We mainly consider \(\gamma > \alpha\). Then from \eqref{LY323a}, we have \(\Phi_{1} = \Phi_{0}/2\). Define \(\Lambda_{0}: = \Phi_{0}\) and
\begin{equation*}
\Lambda_{j + 1}: \; = \frac{1}{2} \Upsilon_{\Lambda_{j}, \pi - \gamma} = \frac{1}{2} \left( (\Phi_{0} - \Lambda_{j}) \tan\alpha \cot\gamma + \Lambda_{j} \right) \; \text{ for } j \in \mathbb{N}.
\end{equation*}
It can be shown that \(\Lambda_{1} = \Phi_{1}\), and that the sequence \(\{ \Lambda_{j} \}\) is strictly decreasing and converges to
\begin{equation*}
\Lambda_{ \infty} = \Psi_{0} \cos\gamma.
\end{equation*}
By \autoref{lma302} and \autoref{lma304}, \eqref{LY303a} holds for \(\vartheta \in [\pi/2, \pi - \alpha )\) and
\(\lambda \geq \Upsilon_{\Lambda_{1}, \vartheta}\). Now, assume that the claim holds for some \(k\ge1\); that is, \eqref{LY303a} holds for all
\(\vartheta \in [\pi/2, \pi - \alpha )\) and \(\lambda \geq \Upsilon_{\Lambda_{k}, \vartheta}\).
Let
\begin{equation} \label{LY332b}
\vartheta = \pi/2 \; \text{ and } \; \lambda \geq \Lambda_{k + 1},
\end{equation}
Then \(\Gamma_{\lambda, \vartheta, \vartheta_{1}}^{2B}\subset{T}_{2\lambda, \pi - \gamma}\)
with \(\pi - \gamma \in [\pi/2, \pi)\) and \(2\lambda \geq 2\Lambda_{k + 1} = \Upsilon_{\Lambda_{k}, \pi - \gamma}\). Hence, \eqref{LY316} is satisfied.
According to \autoref{lma301}, we conclude that \eqref{LY305a} and \eqref{LY303a} hold under \eqref{LY332b}. Combining this with \autoref{lma304}, \eqref{LY303a} holds for \(\vartheta \in [\pi/2, \pi - \alpha )\) and
\(\lambda \geq \Upsilon_{\Lambda_{k + 1}, \vartheta}\). By mathematical induction, the claim is valid for every \(j\), implying that \eqref{LY303a} holds for \(\vartheta \in [\pi/2, \pi - \alpha)\) and
\(\lambda > \Upsilon_{\Lambda_{ \infty}, \vartheta}\).

Thus, we obtain that \eqref{LY303a} holds for \(\vartheta \in [\pi/2, \pi - \alpha)\) and \(\lambda > \Psi_{0}\csc\vartheta\sin(\vartheta - \gamma)\). It remains to show the case \(\lambda = \Psi_{0}\csc\vartheta\sin(\vartheta - \gamma)\).
By continuity, \eqref{LY326} holds \(\vartheta \in [\pi/2, \pi - \alpha)\) and \(\lambda = \Psi_{0}\csc\vartheta\sin(\vartheta - \gamma)\).
Let \({R}_{\bar{x}}{u}\) be the angular derivative of \({u}\) about \(\bar{x} = (1, \cot\beta)\) (see \eqref{LY331a}), and define
\begin{equation*}
\mathcal{D} = \{ {x} \in \Omega: \;
({x} - \bar{x}) \cdot \mathbf{e}_{\pi/2 + \alpha} < 0\}.
\end{equation*}
Since \({R}_{\bar{x}}{u} \leq, \not \equiv 0\) on \(\mathcal{D}\) and satisfies the linear equation \eqref{LY331b}, the strong maximum principle yields \({R}_{\bar{x}}{u} < 0\) in \(\mathcal{D}\). Hence, \eqref{LY303a} holds for
\begin{equation*}
\vartheta \in (\pi/2, \pi - \alpha) \hspace{1mm}\text{and} \hspace{1mm}\lambda = \Psi_{0}\csc\vartheta\sin(\vartheta - \gamma).
\end{equation*}
In particular, \eqref{LY305a} and \eqref{LY303a} hold for \(\vartheta = \pi/2\) and \(\lambda \geq \Psi_{0}\cos\gamma\); moreover, by \autoref{lma304}, \eqref{LY303b} holds for \(\vartheta \in (\pi - \beta, \pi/2 + \gamma]\) and \(\lambda \geq \Psi_{0}\).
This completes the proof.
\end{proof}

As a direct consequence, the following holds.

\begin{cor} \label{cor306}
Let \(\gamma = \pi - \alpha - \beta = \pi/2\). Then \(\partial_{{x}_{1}}{u} < 0\) in \(\Omega\). Moreover, \eqref{LY303a} and \eqref{LY303b} hold for all \(\lambda > 0\) and \(\vartheta \in [\pi/2, \pi)\).
\end{cor}

\begin{lma} \label{lma307}
Let \(\beta \leq \pi/2\), and let \(\lambda > 0\) be fixed. Suppose that \eqref{LY303a} holds for all \(\vartheta \in [\gamma, \pi/2 + \gamma]\), and that \eqref{LY305a} holds at the endpoints \(\vartheta \in \{\gamma, \pi/2 + \gamma\}\). Then \eqref{LY305a} holds for all \(\vartheta \in [\gamma, \pi/2 + \gamma]\).
\end{lma}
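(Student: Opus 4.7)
\textbf{Proof plan for \autoref{lma307}.} The natural approach is a continuity argument in the parameter $\vartheta$, parallel in spirit to the $\lambda$-continuity argument that underlies \autoref{lma301}. Define
\begin{equation*}
S = \{\vartheta \in [\gamma, \pi/2 + \gamma] : w^{\lambda,\vartheta} > 0 \text{ in } D_{\lambda,\vartheta}\}.
\end{equation*}
By hypothesis, $\{\gamma, \pi/2 + \gamma\} \subset S$. Since the interval $[\gamma, \pi/2 + \gamma]$ is connected, it is enough to show that $S$ is both relatively open and relatively closed in $[\gamma, \pi/2 + \gamma]$; the conclusion $S = [\gamma, \pi/2 + \gamma]$ then follows. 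I would first record that for every $\vartheta$ in the interval, the condition $\vartheta \geq \gamma > \pi/2 - \alpha$ forces $\Gamma^{1}_{\lambda}$ to lie in $\Gamma_{D}$ with $w^{\lambda,\vartheta} > 0$ there, so $w^{\lambda,\vartheta}$ is never identically zero, ruling out the trivial case in the maximum principle arguments below.

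For closedness of $S$, I would take a sequence $\vartheta_{n} \to \vartheta_{0}$ with $\vartheta_{n} \in S$. Since the family $\{D_{\lambda,\vartheta}\}$ varies continuously in $\vartheta$ (in Hausdorff distance), passing to the limit in the pointwise inequality $w^{\lambda,\vartheta_{n}} > 0$ yields $w^{\lambda,\vartheta_{0}} \geq 0$ in $D_{\lambda,\vartheta_{0}}$. Combining this with the fact that $w^{\lambda,\vartheta_{0}}$ satisfies the linear equation $\Delta w + c^{\lambda,\vartheta_{0}}w = 0$ and is positive on $\Gamma^{1}_{\lambda} \subset \Gamma_{D}$, the strong maximum principle forces $w^{\lambda,\vartheta_{0}} > 0$ in $D_{\lambda,\vartheta_{0}}$, so $\vartheta_{0} \in S$.

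For openness of $S$, fix $\vartheta_{0} \in S$. The strategy is to mimic Step 2 of the proof of \autoref{lma301}, but now perturbing the angle rather than $\lambda$. The Hopf lemma combined with hypothesis \eqref{LY303a} yields $w^{\lambda,\vartheta_{0}} > 0$ strictly in $\overline{D_{\lambda,\vartheta_{0}}} \setminus T_{\lambda,\vartheta_{0}}$, and the continuity of $\nabla u$ together with the strict inequality $\nabla u \cdot e_{\vartheta_{0}+\alpha} > 0$ on $T_{\lambda,\vartheta_{0}} \cap \Omega$ extends positivity to a tubular neighborhood of $T_{\lambda,\vartheta_{0}} \cap \Omega$ for all nearby $\vartheta$. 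One then decomposes $D_{\lambda,\vartheta}$ for $|\vartheta - \vartheta_{0}|$ small into a compact interior piece, a strip adjacent to $T_{\lambda,\vartheta}$, and possibly corner pieces near $\Gamma_{D} \cap T_{\lambda,\vartheta}$, exactly as in the regions $\mathcal{D}_{1}, \mathcal{D}_{2}, \mathcal{D}_{3}$ of \autoref{fig32MMP}. Positivity on the compact interior piece is inherited from $\vartheta_{0}$ by continuity; in the narrow strip, one applies the narrow-domain maximum principle \autoref{lma201} after verifying that the reflected domain is convex (so that the condition $\nu \cdot e_{1} \geq 0$ holds on $\Gamma^{2}_{\lambda}$) and that the Neumann-reflection condition $\nabla w^{\lambda,\vartheta} \cdot \nu \geq 0$ on $\Gamma^{2}_{\lambda}$ is available from the geometric setup as in the earlier lemmas.

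The main obstacle is the openness step at an endpoint, where $\vartheta_{0} = \gamma$ makes $T_{\lambda,\vartheta_{0}}$ parallel to $\Gamma_{N}^{+}$ and the geometry of $D_{\lambda,\vartheta}$ degenerates; one must verify that the decomposition argument and the Neumann-reflection hypothesis of \autoref{lma201} still apply uniformly in a one-sided neighborhood. This should follow by continuity of the reflection data in $\vartheta$ and the fact that the convex double-domain condition is preserved, but requires a careful inspection of the piece $\Gamma^{2B}_{\lambda}$ as $\vartheta$ passes through $\gamma$ and $\pi/2 + \gamma$.
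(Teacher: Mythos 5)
Your open--closed continuity argument in $\vartheta$ has a genuine gap at precisely the point this lemma is designed to get around. In the openness step you need to run a maximum principle (\autoref{lma201}, or the narrow-strip version) on $D_{\lambda,\vartheta}$ for $\vartheta$ near $\vartheta_{0}$, and for that you must verify $\nabla w^{\lambda,\vartheta}\cdot\nu\geq 0$ on the whole of $\Gamma^{2}_{\lambda,\vartheta}$, in particular on the piece $\Gamma^{2B}_{\lambda,\vartheta}$ coming from the reflection of $\Gamma_{N}^{+}$. On that piece the identity $\nabla w^{\lambda,\vartheta}\cdot\nu=-\nabla u\cdot\nu$ shows the required sign is equivalent to monotonicity of $u$ across the line ${T}_{\check{\lambda},\check{\vartheta}}=\hat{T}_{\hat{\lambda},\hat{\vartheta}}$, i.e.\ to an instance of \eqref{LY303a} at the \emph{larger} parameter $\check{\lambda}(\lambda,\vartheta)>\lambda$ (or of \eqref{LY303b} for the hat family). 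The hypotheses of \autoref{lma307} give \eqref{LY303a} only at the single fixed $\lambda$, so this boundary information is simply not available; indeed the paper states explicitly, just before the proof, that $w^{\lambda,\vartheta}$ fails the Neumann condition on $\Gamma^{2B}_{\lambda,\vartheta}$ for some intermediate $\vartheta$ (e.g.\ $\vartheta$ slightly below $\pi/2+\gamma$). Your remark that this condition ``is available from the geometric setup as in the earlier lemmas'' is exactly the missing step, and it is false in general here, so the openness argument does not close. (The closedness step is fine but carries no real content.)

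The paper's proof avoids the maximum principle altogether and is purely pointwise: the hypothesis \eqref{LY303a} for all $\vartheta\in[\gamma,\pi/2+\gamma]$ is converted into angular monotonicity of $u$ along arcs $S(P_{\lambda},r)$ centred at $P_{\lambda}$ (statement \eqref{LY334c}), while \eqref{LY305a} at the two endpoint angles gives the comparisons \eqref{LY334a}--\eqref{LY334b} across the extreme lines ${T}_{\lambda,\gamma}$ and ${T}_{\lambda,\pi/2+\gamma}$. For a fixed intermediate $\vartheta$ and $x\in D_{\lambda,\vartheta}$, both $x$ and $x^{\lambda,\vartheta}$ lie on the same circle about $P_{\lambda}$, and one compares $u(x)<u(x^{\lambda,\vartheta})$ by at most three monotone steps: first reflect $x$ across ${T}_{\lambda,\gamma}$ if its angular coordinate is out of range, then slide along the arc, then reflect across ${T}_{\lambda,\pi/2+\gamma}$ if needed (the four cases in the paper, following Lemma 13 of \cite{YCG21}). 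If you want to salvage your approach you would have to strengthen the hypotheses to control $\Gamma^{2B}$ at the shifted parameters $\check{\lambda}$, which is exactly what the surrounding lemmas cannot supply at this stage of the iteration; the folding argument is what makes the weak hypotheses of \autoref{lma307} sufficient.
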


\begin{proof}
Note that the function \({w}^{\lambda, \vartheta}\) may fail to satisfy the Neumann boundary condition on \(\Gamma_{\lambda, \vartheta}^{2B}\) for certain values of \(\vartheta\) (for example, when \(\vartheta\) is slightly less than \(\pi/2 + \gamma\)).
To establish
\begin{equation*} 
{w}^{\lambda, \vartheta}( {x} ) = {u}({x}^{\lambda, \vartheta}) - {u}( {x} ) > 0 \text{ for } {x} \in \overline{ {D}_{\lambda, \vartheta} } \setminus {T}_{\lambda, \vartheta},
\end{equation*}
we follow the approach in \cite[Lemma 13]{YCG21}. From the assumptions of this lemma, we have
\begin{align} \label{LY334a}
{w}^{\lambda, \pi/2 + \gamma} > 0 & \; \text{ in } \overline{ {D}_{\lambda, \pi/2 + \gamma} } \setminus {T}_{\lambda, \pi/2 + \gamma},
\\ \label{LY334b}
{w}^{\lambda, \gamma} > 0 & \; \text{ in } \overline{ {D}_{\lambda, \gamma} } \setminus {T}_{\lambda, \gamma},
\end{align}
and
\begin{equation} \label{LY334c} \normalsize
\text{ \({u}( {x} )\) is increasing as the angle of \(\overrightarrow{{P}_{\lambda}{x}}\) and \(\overrightarrow{{P}_{\lambda}{z}_{0}}\) decreases along each arc \({S}({P}_{\lambda}, {r})\)},
\end{equation}
where \({P}_{\lambda} = (\lambda\sin\alpha, - \lambda\cos\alpha)\) and
\begin{equation*}
{S}({P}_{\lambda}, {r}) = \{ {x} \in \overline{\Omega}: \; |{x} - {P}_{\lambda}| = {r}, \; ({x} - {P}_{\lambda}) \cdot \mathbf{e}_{\pi/2 - \beta} \geq 0 \geq
({x} - {P}_{\lambda}) \cdot \mathbf{e}_{ - \beta} \}.
\end{equation*}
The condition \(\beta \leq \pi/2\) guarantees that
\(\gamma \geq \pi/2 - \alpha\), so that \({S}({P}_{\lambda}, {r})\) is connected and lies on a circular arc. Fix any \(\vartheta \in (0, \pi/2 + \gamma)\) and let \({x}\) be an arbitrary point in
\(\overline{ {D}_{\lambda, \vartheta} } \setminus {T}_{\lambda, \vartheta}\). Then there exist two angles \(\psi_{1}, \psi_{2} \in [0, \pi]\) such that
\begin{equation}
{x} \in {T}_{\lambda, \psi_{1}} \quad \text{and} \quad {y}: = {x}^{\lambda, \vartheta} \in {T}_{\lambda, \psi_{2}},
\end{equation}
where we note that \(\psi_{1} = 2\vartheta - \psi_{2} < \vartheta\). We consider the following four cases.

\textbf{Case 1}:
\(\psi_{2}, \psi_{1} \in [\gamma, \pi/2 + \gamma]\).
By the monotonicity in \eqref{LY334c}, we directly obtain:
\begin{equation*}
{u}( {x} ) < {u}({y}).
\end{equation*}

\textbf{Case 2}:
\(\psi_{2} \in [\gamma, \pi/2 + \gamma]\) and \(\psi_{1}\not \in [\gamma, \pi/2 + \gamma]\).
Reflect \({x}\) with respect to the line \({T}_{\lambda, \gamma}\) to obtain a point \({x}^{\lambda, \gamma}\). Then \({x}^{\lambda, \gamma} \in {T}_{\lambda, \psi_{1}'}\) with \(\psi_{1}' = 2\gamma - \psi_{1}\). Since \(\gamma > \pi/2 - \alpha\), we deduce that \({x}^{\lambda, \gamma} \subset\{{x}_{1} < 1\}\).
Since \(\psi_{1}' - \psi_{2} = 2(\gamma - \vartheta) < 0\) and \(\psi_{2} \leq \pi/2 + \gamma\), we deduce that \({x}^{\lambda, \gamma}\) lies below the line containing \(\Gamma_{N}^{ - }\). Thus, \({x}^{\lambda, \gamma} \in \overline{\Omega}\). Then from \eqref{LY334b} and \eqref{LY334c}, we obtain:
\begin{equation*}
{u}( {x} ) < {u}({x}^{\lambda, \gamma}) \quad \text{and} \quad {u}({x}^{\lambda, \gamma}) < {u}({y}).
\end{equation*}

\textbf{Case 3}:
\(\psi_{2}\not \in [\gamma, \pi/2 + \gamma]\) and \(\psi_{1} \in [\gamma, \pi/2 + \gamma]\).
Reflect \({y}\) with respect to the line \({T}_{\lambda, \pi/2 + \gamma}\) to obtain a point \({y}^{\lambda, \pi/2 + \gamma}\). Then \({y}^{\lambda, \pi/2 + \gamma} \in {T}_{\lambda, \psi_{2}'}\) with \(\psi_{2}' = \pi + 2\gamma - \psi_{2}\). It follows that \({y}^{\lambda, \pi/2 + \gamma}\) lies on or below the line containing \(\Gamma_{N}^{ + }\). Moreover, since \(\psi_{2}' - \psi_{1} = 2(\pi/2 + \gamma - \vartheta) > 0\) and \(\psi_{1} \geq \pi/2 - \alpha\), we deduce that \({y}^{\lambda, \pi/2 + \gamma} \in \overline{\Omega}\). Thus, from \eqref{LY334c} and \eqref{LY334a}, we obtain
\begin{equation*}
{u}( {x} ) < {u}({y}^{\lambda, \pi/2 + \gamma}) \quad \text{and} \quad {u}({y}^{\lambda, \pi/2 + \gamma}) < {u}({y}).
\end{equation*}

\textbf{Case 4}:
\(\psi_{2}, \psi_{1}\not \in [\gamma, \pi/2 + \gamma]\).
In this case, we have \({y}^{\lambda, \pi/2 + \gamma} \in {T}_{\lambda, \psi_{2}'}\) and \({x}^{\lambda, \gamma} \in {T}_{\lambda, \psi_{1}'}\).
Since \(0 \leq \psi_{1} < \psi_{2} \leq \pi\), we deduce that \(\psi_{1} \geq \max\{2\vartheta - \pi, 0\}\) and hence
\(\psi_{2}' - \psi_{1}' = 2(\pi/2 + \psi_{1} - \vartheta) \geq 0\). Thus, one obtains
\begin{equation*}
0 \leq \psi_{1} < \gamma < \psi_{1}' \leq \psi_{2}' < \pi/2 + \gamma < \psi_{2} \leq \pi.
\end{equation*}
It follows that both \({x}^{\lambda, \gamma}\) and \(({x}^{\lambda, \vartheta})^{\lambda, \pi/2 + \gamma}\) belong to \(\overline{\Omega}\). Consequently, from \eqref{LY334b}, \eqref{LY334c} and \eqref{LY334a} we have
\begin{equation*}
{u}( {x} ) < {u}({x}^{\lambda, \gamma}), \quad
{u}({x}^{\lambda, \gamma}) \leq {u}({y}^{\lambda, \pi/2 + \gamma}), \quad \text{and} \quad {u}({y}^{\lambda, \pi/2 + \gamma}) < {u}({y}).
\end{equation*}
In all cases, we have shown that \({u}( {x} ) < {u}({y})\). This concludes the proof.
\end{proof}

\begin{lma} \label{lma308}
Let \(\alpha\), \(\beta\), and \(\gamma\) be acute angles.
Then there exists a small constant \(\varepsilon > 0\) such that
\begin{enumerate}
\item[\rm(i)]
\eqref{LY303a} is valid for \(\vartheta \in [\pi/2 - \alpha, \pi/2 + \gamma]\) and \(\lambda > \Phi_{0} - \varepsilon\),
\item[\rm(ii)]
\eqref{LY303b} is valid for \(\vartheta \in [\pi/2 - \beta, \pi/2 + \gamma]\) and \(\lambda > \Psi_{0} - \varepsilon\).
\end{enumerate}
\end{lma}

\begin{proof}
We focus on the first assertion; the second one can be proved in the same way.

\textbf{Step 1}.
Assume that both \(\alpha\) and \(\beta\) are acute. Then \eqref{LY303a} holds for
\begin{equation*} 
\lambda \geq \Phi_{0} - \varepsilon_{1} \quad \text{and} \quad \pi/2 - \alpha \leq \vartheta \leq \pi - \alpha,
\end{equation*}
where \(\varepsilon_{1}\in(0, \Phi_{0} - \Phi_{2})\) is chosen so that, setting \(\Phi_{3}: = \Phi_{0} - \varepsilon_{1}\) and
\(\Psi_{3}: = \Phi_{3}\sin\alpha\, \csc\beta\), the following hold:
\begin{subequations}\label{LY335cd}
\begin{align}
\label{LY335c}
&\text{\eqref{LY303a} holds for \(\vartheta \in [\tfrac{\pi}{2} - \alpha, \alpha_{**}]\) and \(\lambda \geq \Phi_{3}\), }
\\
\label{LY335d}
&\text{\eqref{LY303b} holds for \(\vartheta \in [\tfrac{\pi}{2} - \beta, \beta_{**}]\) and \(\lambda \geq \Psi_{3}\).}
\end{align}
\end{subequations}
The existence of such an \(\varepsilon_{1}\) follows from \autoref{lma303} and \autoref{lma304}, together with the fixed constants \(\alpha_{**}\) and \(\beta_{**}\) satisfying
\begin{equation*}
\max\{\beta/2 + \gamma, \ \pi - 2\alpha\} < \alpha_{**} < \pi - \alpha,
\quad
\pi - 2\beta < \beta_{**} = \pi - 2\alpha_{**} + 2\gamma < \pi - \beta.
\end{equation*}

For \(\vartheta = \pi - \alpha\) and \(\lambda \geq \Phi_{3}\), we have \(\Gamma_{\lambda, \vartheta}^{2A} \subset {T}_{\lambda, \pi - 2\alpha}\) and \(\Gamma_{\lambda, \vartheta}^{2B} \subset {T}_{\hat{\lambda}, \pi - 2\beta}\),
where \(\hat{\lambda} = \lambda\sin\alpha\csc\beta \geq \Psi_{3}\).
By \eqref{LY335cd}, \(w^{\lambda, \vartheta}\) satisfies the strict boundary condition in \eqref{LY316} on \(\Gamma_{\lambda, \vartheta}^{2}\).
Hence, by \autoref{lma301}, \eqref{LY305a} and \eqref{LY303a} hold for \(\vartheta = \pi - \alpha\) and \(\lambda \geq \Phi_{3}\).

It remains to consider \(\vartheta\in(\alpha_{**}, \pi - \alpha)\) with \(\lambda \geq \Phi_{3}\).
In this case,
\(\Gamma_{\lambda, \vartheta}^{2B}\subset {T}_{\hat{\lambda}, \hat{\vartheta}}\), where
\begin{equation*}
\hat{\lambda} = \hat{\lambda}(\lambda, \vartheta) > \Psi_{3}
\quad \text{and} \quad
\hat{\vartheta} = \pi - 2\vartheta + 2\gamma \in [\pi - 2\beta, \beta_{**}].
\end{equation*}
It then follows from \eqref{LY335d} that \(w^{\lambda, \vartheta}\) satisfies the strict boundary condition in \eqref{LY316} on \(\Gamma_{\lambda, \vartheta}^{2B}\).
Therefore, arguing as in Step 1.1 of \autoref{lma303}, we deduce that \eqref{LY305a} and \eqref{LY303a} hold for \(\vartheta\in(\alpha_{**}, \pi - \alpha)\) and \(\lambda \geq \Phi_{3}\).
This completes Step~1.

\textbf{Step 2}.
We claim that \eqref{LY303a} holds when \(\vartheta = \tilde{\vartheta}_{*}\) and \(\Phi_{0} - \lambda\) is positive and sufficiently small.
Here, we define \(\tilde{\vartheta}_{*} = \tilde{\vartheta}_{\kappa}\), where the integer \(\kappa\) is chosen such that \(\tilde{\vartheta}_{\kappa} \geq 2\gamma > \tilde{\vartheta}_{\kappa - 1}\), i.e.,
\(\tilde{\vartheta}_{\kappa + 1} \geq \gamma + \pi/2 > \tilde{\vartheta}_{\kappa }\), and
\begin{equation*}
\tilde{\vartheta}_{0} = \pi - \alpha, \quad
\tilde{\vartheta}_{j} = (\pi + \tilde{\vartheta}_{j - 1})/2, \; j \in \mathbb{N}^{+}.
\end{equation*}

By Step 1, the claim is valid when \(\tilde{\vartheta}_{*} \leq \pi - \alpha\), that is, when \(2\gamma \leq \pi - \alpha\) or equivalently \(\gamma \leq \beta\).
Now we assume that \(\gamma > \beta\) (hence \(\gamma > \pi/4\)).
Applying \autoref{lma307}, we obtain
\begin{equation} \label{LY337a}
{w}^{\lambda, \vartheta} > 0 \text{ in } \overline{{D}_{\lambda, \vartheta, 2\vartheta - \pi}} \setminus {T}_{\lambda, \vartheta}
\end{equation}
for \(\vartheta \in [\pi - \alpha/2, \pi/2 + \gamma]\) and \(\lambda \geq \Phi_{0}\).
Based on this and the monotonicity \eqref{LY322b} on the upper Neumann boundary \(\Gamma_{N}^{ + }\) (see details in \autoref{lma305} and Step 2 of \autoref{lma302}),
it follows by continuity that there exists \(\varepsilon_{2} \in (0, \varepsilon_{1})\) such that
\begin{equation*} 
{w}^{\lambda, \vartheta} > 0 \text{ on } \Gamma_{\lambda, \vartheta, 2\vartheta - \pi}^{2B} \; \text{ for } \lambda > \Phi_{0} - \varepsilon_{2} \; \text{ and } \; \vartheta \in \{\tilde{\vartheta}_{1}, \tilde{\vartheta}_{2}, \ldots, \tilde{\vartheta}_{\kappa}\}.
\end{equation*}

Now, set \(\vartheta = \tilde{\vartheta}_{1}\) and \(\vartheta_{1} = 2\vartheta - \pi\).
Then \(\vartheta_{1} = \tilde{\vartheta}_{0}\) and
\begin{equation*}
\begin{cases}
\Delta {w}^{\lambda, \vartheta} + {c}^{\lambda, \vartheta} {w}^{\lambda, \vartheta} = 0 & \text{in } {D}_{\lambda, \vartheta},
\\
{w}^{\lambda, \vartheta} \geq 0 & \text{on } \partial {D}_{\lambda, \vartheta} \setminus \Gamma_{\lambda, \vartheta}^{2A},
\\
\partial_{\nu}{w}^{\lambda, \vartheta} \geq, \not \equiv 0 & \text{on } \Gamma_{\lambda, \vartheta}^{2A},
\end{cases}
\end{equation*}
which is similar to \eqref{LY316}. Following an argument similar to that in \autoref{lma301}, we conclude that \eqref{LY337a} and \eqref{LY303a} hold for \(\lambda > \Phi_{0} - \varepsilon_{2}\) with \(\vartheta = \tilde{\vartheta}_{1}\).
By mathematical induction, we conclude that \eqref{LY337a} and \eqref{LY303a} hold for all \(\lambda > \Phi_{0} - \varepsilon_{2}\) and \(\vartheta \in \{\tilde{\vartheta}_{1}, \tilde{\vartheta}_{2}, \ldots, \tilde{\vartheta}_{\kappa}\}\).

\textbf{Step 3}.
We claim that \eqref{LY303a} holds for \(\vartheta \in [\pi - \alpha, \pi/2 + \gamma]\) and \(\lambda > \Phi_{0} - \varepsilon_{2}\).
Indeed, this follows by an argument analogous to Step 2 in the proof of \autoref{lma304}.
For any fixed \(\lambda \in [\Phi_{0} - \varepsilon_{2}, \Phi_{0})\) and \(\bar{x} = (\bar{x}_{1}, \bar{x}_{2})\) with
\(\bar{x}_{1} = \lambda\sin\alpha\), \(\bar{x}_{2} = - \lambda\cos\alpha\), define
\begin{equation*}
\mathcal{D} = \{ {x} \in \Omega: \;
({x} - \bar{x}) \cdot \mathbf{e}_{\tilde{\vartheta}_{*} + \alpha} < 0, \; ({x} - \bar{x}) \cdot \mathbf{e}_{\tilde{\vartheta}_{0} + \alpha} > 0\}.
\end{equation*}
Then the angular derivative \({R}_{\bar{x}}{u}\), introduced in \eqref{LY331a}, belongs to \({C}^{1}(\overline{\mathcal{D}})\) and satisfies
\begin{equation} \label{LY338}
[\Delta + {f}'({u})] {R}_{\bar{x}}{u} = 0 \text{ in } \mathcal{D} \quad \text{and} \quad {R}_{\bar{x}}{u} \geq, \not \equiv 0 \text{ on } \partial\mathcal{D}
\end{equation}
as established in Steps 1 and 2. Recalling that
\begin{equation*}
[\Delta + {f}'({u})] (\nabla {u} \cdot \mathbf{e}_{\tilde{\vartheta}_{*} + \alpha}) = 0 \text{ in } \mathcal{D} \quad \text{and} \quad (\nabla {u} \cdot \mathbf{e}_{\tilde{\vartheta}_{*} + \alpha}) < 0 \text{ in } \overline{\mathcal{D}} \setminus \partial\Omega,
\end{equation*}
we apply the maximum principle (see, e.g., \cite{BNV94}) to conclude that
\({R}_{\bar{x}}{u} > 0\). In particular, this establishes \eqref{LY303a} for \(\vartheta \in [\pi/2 - \alpha, \tilde{\vartheta}_{*}]\) and \(\lambda \geq \Phi_{0} - \varepsilon_{2}\).

Now, let \(\vartheta = \pi/2 + \gamma\), \(\vartheta_{1} = 2\vartheta - \pi = 2\gamma\) with \(\lambda \geq \Phi_{0} - \varepsilon_{2}\). Observe that \(\Gamma_{\lambda, \vartheta}^{2B}\) and its reflection \((\Gamma_{\lambda, \vartheta}^{2B})'\) belong to \(\Gamma_{N}^{ + }\).
By the definition of \(\tilde{\vartheta}_{*}\), we derive \(2\gamma \in [\pi/2 - \alpha, \tilde{\vartheta}_{*}]\), which ensures that \({w}^{\lambda, \vartheta}\) satisfies \eqref{LY316}. Applying \autoref{lma301}, we conclude that both \eqref{LY305a} and \eqref{LY303a} hold for
\(\vartheta = \pi/2 + \gamma\) and \(\lambda \geq \Phi_{0} - \varepsilon_{2}\).

Finally, as argued at the beginning of this step, it follows that
\eqref{LY303a} remains valid for \(\vartheta \in [\pi - \alpha, \pi/2 + \gamma]\) with \(\lambda \geq \Phi_{0} - \varepsilon_{2}\).
This completes the proof.
\end{proof}

\begin{thm} \label{thm309}
Let \({u}\) be a positive solution of the semilinear mixed boundary value problem \eqref{LY101} in a planar triangle \(\Omega\), with the notations as in \autoref{Sec03acu}. Suppose that
\begin{equation*}
\alpha \in (0, \pi/2), \quad \beta \in (0, \pi/2), \quad \gamma \in (0, \pi/2].
\end{equation*}
Then \(\partial_{{x}_{1}}{u} < 0\) in \(\Omega\). Moreover, for every \(\lambda > 0\) and \(\vartheta \in [\gamma, \pi)\), the following inequalities hold:
\begin{gather}
\label{LY333a}
\nabla {u} \cdot \mathbf{e}_{\vartheta + \alpha} > 0 \text{ on } (\Omega \cup \Gamma_{N}^{ - }) \cap {T}_{\lambda, \vartheta},
\\
\label{LY333b}
\nabla {u} \cdot \mathbf{e}_{ - \vartheta - \beta} > 0 \text{ on } (\Omega \cup \Gamma_{N}^{ + }) \cap \hat{T}_{\lambda, \vartheta}.
\end{gather}
\end{thm}

\begin{proof}
The case \(\gamma = \pi/2\) is already established in \autoref{cor306}; we thus assume \(\gamma < \pi/2\). Let \((\bar{t}, \infty)\) denote the largest open interval of positive values \({t}\) for which \eqref{LY303a} holds for \(\vartheta \in [\gamma, \pi/2 + \gamma]\) and \(\lambda \geq {t} \Phi_{0}\), and for which \eqref{LY303b} holds for \(\vartheta \in [\gamma, \pi/2 + \gamma]\) and \(\lambda \geq {t} \Psi_{0}\). By \autoref{lma308}, \(\bar{t}\) is well defined and \(\bar{t} < 1\). Assume by contradiction that \(\bar{t} > 0\), and define \(\bar{\Phi}: = \bar{t}\Phi_{0}\).

\textbf{Step 1}.
We first establish that \eqref{LY303a} holds for \(\vartheta \in [\gamma, \pi/2 + \gamma]\) and \(\lambda \geq \bar{t} \Phi_{0}\), while \eqref{LY303b} remains valid for \(\vartheta \in [\gamma, \pi/2 + \gamma]\) and \(\lambda \geq \bar{t} \Psi_{0}\).
In fact, we set
\begin{equation*}
\bar{x}: = (\bar{x}_{1}, \bar{x}_{2}), \quad \text{where} \quad \bar{x}_{1}: = \bar{\Phi}\sin\alpha, \quad \bar{x}_{2}: = - \bar{\Phi}\cos\alpha,
\end{equation*}
and
\begin{equation*}
\mathcal{D}: = \{ {x} \in \Omega: \; ({x} - \bar{x}) \cdot \mathbf{e}_{\pi/2 - \beta} > 0, \quad ({x} - \bar{x}) \cdot \mathbf{e}_{ - \beta} < 0 \}.
\end{equation*}
By the monotonicity near the Dirichlet boundary (see \autoref{lma203GNN}), the angular derivative \({R}_{\bar{x}}{u}\), defined in \eqref{LY331a}, does not vanish identically in \(\mathcal{D}\). From the definition of \(\bar{t}\) and continuity, we know that the nonstrict inequality \eqref{LY326} holds for \(\vartheta \in [\gamma, \pi/2 + \gamma]\) and \(\lambda \geq \bar{\Phi} = \bar{t} \Phi_{0}\), and hence \({R}_{\bar{x}}{u} \geq 0\) in \(\mathcal{D}\).
Applying the strong maximum principle to the linear elliptic equation satisfied by the nonnegative function \({R}_{\bar{x}}{u}\) (see \eqref{LY338}), we conclude that \({R}_{\bar{x}}{u} > 0\) in \(\mathcal{D}\), implying that \eqref{LY303a} holds strictly for \(\vartheta \in (\gamma, \pi/2 + \gamma)\) and \(\lambda = \bar{\Phi}\).

For \(\vartheta = \gamma\) and \(\lambda \geq \bar{\Phi}\), we observe that \(\Gamma_{\lambda, \gamma}^{2B}\subset {T}_{2\lambda, \gamma}\), and
\begin{equation*}
\Gamma_{\lambda, \gamma}^{2A} \subset {T}_{\lambda, 0}, \quad (\Gamma_{\lambda, \gamma}^{2A})'\subset {T}_{\lambda, 2\gamma} \quad \text{with} \quad 2\gamma \in [\gamma, \pi/2 + \gamma].
\end{equation*}
Thus, \eqref{LY305a} and \eqref{LY303a} hold for \(\vartheta = \gamma\) and \(\lambda \geq \bar{\Phi}\).

Similarly, for \(\vartheta = \pi/2 + \gamma\) and \(\lambda \geq \bar{\Phi}\), we have \(\Gamma_{\lambda, \gamma}^{2B} \subset \Gamma_{N}^{ + }\), and
\begin{equation*}
(\Gamma_{\lambda, \pi/2 + \gamma}^{2A})'\subset {T}_{\lambda, \pi}, \quad \Gamma_{\lambda, \pi/2 + \gamma}^{2A}\subset {T}_{\lambda, 2\gamma} \quad \text{with} \quad 2\gamma \in [\gamma, \pi/2 + \gamma].
\end{equation*}
Thus, \eqref{LY305a} and \eqref{LY303a} hold for \(\vartheta = \pi/2 + \gamma\) and \(\lambda \geq \bar{\Phi}\).
Furthermore, by Step 2 of \autoref{lma302}, \({u}\) is strictly monotone along the upper Neumann boundary \(\Gamma_{N}^{ + }\); in particular, \eqref{LY322b} holds for \(\lambda \geq \bar{\Phi}\cos\gamma\). In summary, we have:
\begin{itemize}
\item
\eqref{LY303a} holds for \(\vartheta \in [\gamma, \pi/2 + \gamma]\) and \(\lambda \geq \bar{\Phi}\), and \eqref{LY322b} is true for \(\lambda \geq \bar{\Phi}\cos\gamma\).
\item
Similarly, \eqref{LY303b} holds for \(\vartheta \in [\gamma, \pi/2 + \gamma]\) and \(\lambda \geq \bar{t} \Psi_{0}\), and \eqref{LY322a} is true for \(\lambda \geq \bar{t} \Psi_{0}\cos\gamma\).
\end{itemize}

\textbf{Step 2}.
We next show that \eqref{LY303a} and \eqref{LY303b} remain valid for \(\vartheta \in [\gamma, \pi/2 + \gamma]\) and \(0 < \bar{t}\Phi_{0} - \lambda\ll1\), similarly, \eqref{LY303b} holds for \(\vartheta \in [\gamma, \pi/2 + \gamma]\) and \(0 < \bar{t}\Psi_{0} - \lambda\ll1\).
Indeed, let \(\bar{\vartheta}\) be the unique angle such that \({T}_{\bar{\Phi}, \bar{\vartheta}}\) passes through the upper mixed boundary point. Since \(\bar{\Phi} < \Phi_{0}\), it follows that \(\bar{\vartheta} \in (\gamma, \pi - \alpha)\). Choose \(\delta > 0\) sufficiently small so that
\begin{equation} \label{LY339a}
0 < \delta < \min\big\{ \tfrac{\bar{\vartheta} - \gamma}{3}, \; \tfrac{\pi/2 + \gamma - \bar{\vartheta}}{5}, \; \tfrac{\pi/2 + 2\gamma + \beta - 2\bar{\vartheta}}{2} \big\} \quad \text{and} \quad \hat{\lambda}(\bar{\Phi}, \bar{\vartheta} + \delta) > \Psi_{2}.
\end{equation}
Then there exists \(\varepsilon_{1} > 0\) small enough so that
\begin{equation} \label{LY339b}
\hat{\lambda}(\lambda, \vartheta) > \Psi_{2} \quad \text{and} \quad \check{\lambda}(\lambda, \vartheta) > \bar{\Phi}
\quad \text{for all } |\vartheta - \bar{\vartheta}| \leq \delta \; \text{ and } \; \lambda \geq \bar{\Phi} - \varepsilon_{1}.
\end{equation}
Note that \({u}\) is strictly monotone along the Neumann boundary (see \eqref{LY322a} for \(\lambda \geq \bar{\Phi}\) and \eqref{LY322b} for \(\lambda \geq \bar{\Phi}\cos\gamma\)) as well as near the Dirichlet boundary (see \autoref{lma203GNN}). By continuity, there exists a small constant \(\varepsilon_{2} \in (0, \varepsilon_{1})\) such that \eqref{LY303a} holds for
\begin{equation*}
\vartheta \in [\gamma, \bar{\vartheta} - \delta] \cup [\bar{\vartheta} + \delta, \pi/2 + \gamma] \quad \text{and} \quad \lambda \geq \bar{\Phi} - \varepsilon_{2}.
\end{equation*}
Now consider the case where \(|\vartheta - \bar{\vartheta}| < \delta\) and \(\lambda \geq \bar{\Phi} - \varepsilon_{2}\). Define \(\vartheta_{1} = \bar{\vartheta} - 3\delta\) and \(\vartheta_{3} = 2\vartheta - \vartheta_{1}\). By \eqref{LY339a}, both \(\vartheta_{1}\) and \(\vartheta_{3}\) belong to \([\gamma, \bar{\vartheta} - \delta] \cup [\bar{\vartheta} + \delta, \pi/2 + \gamma]\),
ensuring that \({w}^{\lambda, \vartheta}\) satisfies the strict boundary condition on \(\Gamma_{\lambda, \vartheta, \vartheta_{1}}^{2A}\).
Furthermore, \(\Gamma_{\lambda, \vartheta}^{2B}\subset \hat{T}_{\hat{\lambda}, \hat{\vartheta}} = {T}_{\check{\lambda}, \check{\vartheta}}\) with
\begin{align*}
& \hat{\lambda} > \Psi_{2} \quad \text{and} \quad \hat{\vartheta} = \pi - 2\vartheta + 2\gamma \in [\pi/2 - \beta, \pi/2] \quad \text{if } \vartheta \geq \pi/4 + \gamma,
\\
& \check{\lambda} > \bar{\Phi} \quad \text{and} \quad \check{\vartheta} = 2\vartheta - \gamma \in [\gamma, \pi/2 + \gamma] \quad \text{if } \vartheta < \pi/4 + \gamma,
\end{align*}
where \eqref{LY339b} is used. Therefore, \({w}^{\lambda, \vartheta}\) satisfies the strict boundary condition on \(\Gamma_{\lambda, \vartheta, \vartheta_{1}}^{2B}\). Consequently, \({w}^{\lambda, \vartheta}\) satisfies \eqref{LY316}. Applying \autoref{lma301}, we deduce that \eqref{LY305b} and \eqref{LY303a} remain valid for all \(\vartheta \in [\bar{\vartheta} - \delta, \bar{\vartheta} + \delta]\), with \(\vartheta_{1} = \bar{\vartheta} - 3\delta\) and for \(\lambda \geq \bar{\Phi} - \varepsilon_{2}\). Thus, \eqref{LY303a} holds for every \(\vartheta \in [\gamma, \pi/2 + \gamma]\) and \(\lambda \geq \bar{\Phi} - \varepsilon_{2}\). A similar argument applies to \eqref{LY303b}, ensuring its validity for all \(\vartheta \in [\gamma, \pi/2 + \gamma]\) when \(0 < \bar{t} \Psi_{0} - \lambda \ll 1\).

This contradicts the definition of \(\bar{t}\), implying that \(\bar{t} = 0\). The proof is complete.
\end{proof}

\begin{thm} \label{thm310}
Let \({u}\) be a positive solution of the semilinear mixed boundary value problem \eqref{LY101} in a planar triangle \(\Omega\), with the notations as in \autoref{Sec03acu}. Suppose that
\begin{equation*}
\max\{\alpha, \beta\} \geq \pi/2.
\end{equation*}
Then \(\partial_{{x}_{1}}{u} < 0\) in \(\Omega\). Furthermore, \({u}\) has no non-vertex critical point, and
\(\nabla {u} \cdot \mathbf{e}_{\theta} < 0\) in \(\Omega \) when \( - \min\{\pi/2, \beta\} \leq \theta \leq \min\{\pi/2, \alpha\}\).
\end{thm}

\begin{proof}
Without loss of generality, we assume that \(\alpha \geq \pi/2\). Set
\begin{equation*}
\bar{\Phi}: = \inf\{\Phi > 0: \; \eqref{LY303a} \text{ holds for every } \vartheta \in (0, \pi/2 + \gamma] \; \text{ and } \; \lambda \geq \Phi\}.
\end{equation*}
We argue by contradiction and suppose that \(\bar{\Phi} > 0\). By \autoref{lma305}, this infimum is well-defined and satisfies \(\bar{\Phi} \leq \Phi_{0}\). Using the same reasoning as in Step 1 of \autoref{thm309}, it follows that \eqref{LY303a} holds for all \(\vartheta \in (0, \pi/2 + \gamma]\) and \(\lambda \geq \bar{\Phi}\). More precisely, for each \(\lambda \geq \bar{\Phi}\), we have:
\begin{equation*} 
{w}^{\lambda, \pi/2 + \gamma} > 0 \text{ in } \overline{ {D}_{\lambda, \pi/2 + \gamma} } \setminus {T}_{\lambda, \pi/2 + \gamma},
\end{equation*}
and
\begin{equation*} 
\text{ \({u}\) is increasing as the angle between \(\overrightarrow{ {P}_{\lambda} {x} }\) and \(\overrightarrow{ {P}_{\lambda} {z}_{0}}\) decreases on each arc \({S}({P}_{\lambda}, {r})\)},
\end{equation*}
where \({S}({P}_{\lambda}, {r})\) is a connected arc of a circle defined by
\begin{equation*}
{S}({P}_{\lambda}, {r}) = \{ {x} \in \overline{\Omega}: \; |{x} - {P}_{\lambda}| = {r}, \; ({x} - {P}_{\lambda}) \cdot \mathbf{e}_{\pi/2 - \beta} \geq 0 \}.
\end{equation*}

\textbf{Step 1}.
For \(\vartheta \in (0, \pi/2 + \gamma]\) and \(\lambda \geq \bar{\Phi}\), the inequality
\begin{equation} \label{LY343}
{w}^{\lambda, \vartheta} > 0 \text{ in } \overline{ {D}_{\lambda, \vartheta} } \setminus {T}_{\lambda, \vartheta}
\end{equation}
holds. Details are omitted as the argument parallels that in \autoref{lma307}.

\textbf{Step 2}.
Both \eqref{LY305a} and \eqref{LY303a} hold at \(\vartheta = \pi/2\) for all \(\lambda \in (\bar{\Phi} - \varepsilon_{1}, \bar{\Phi}]\) for some small constant \(\varepsilon_{1} > 0\). This follows from the same argument as in Step 2 of the proof of \autoref{lma308}.

\textbf{Step 3}.
We now show that \eqref{LY303a} holds for
\begin{equation} \label{LY345}
0 < \vartheta \leq \tilde{\vartheta}_{0} \; \text{ and } \; \lambda \geq \bar{\Phi} - \varepsilon_{2},
\end{equation}
where \(\tilde{\vartheta}_{0} = \max\{\pi/4 + \gamma, \bar{\vartheta} + \delta\}\) and \(\bar{\vartheta}\) is the unique angle such that \({T}_{\bar{\Phi}, \bar{\vartheta}}\) contains the upper mixed boundary point. The constants \(\delta > 0\) and \(\varepsilon_{2} \in (0, \varepsilon_{1})\) are chosen to satisfy:
\begin{equation} \label{LY346}
\bar{\vartheta} + \delta < \pi/4 + \gamma + \beta/2, \quad
\hat{\lambda}(\bar{\Phi} - \varepsilon_{2}, \bar{\vartheta} + \delta) > \Psi_{2},
\quad (\bar{\Phi} - \varepsilon_{2})(1 + \sin\gamma) > \bar{\Phi}.
\end{equation}
Note that the existence of such \(\delta\) and \(\varepsilon_{2}\) is ensured by the fact that \(\bar{\vartheta} \leq \pi - \alpha < \pi/4 + \gamma + \beta/2\).

Step 3.1.
We verify that the strict boundary condition of \({w}^{\lambda, \vartheta}\) holds on \(\Gamma_{\lambda, \vartheta}^{2B}\) under the assumptions in \eqref{LY345}. In fact, we have:
\begin{enumerate}
\item[\rm(1)]
If \(\vartheta \in (0, \gamma/2]\), then \(\Gamma_{\lambda, \vartheta}^{2B} = \emptyset\) is always valid.
\item[\rm(2)]
If \(\vartheta \in (\gamma/2, \pi/4 + \gamma]\), then \(\Gamma_{\lambda, \vartheta}^{2B} = {T}_{\check{\lambda}, \check{\vartheta}}\) with
\begin{equation*}
\check{\vartheta} = 2\vartheta - \gamma \in (0, \pi/2 + \gamma] \; \text{ and } \; \check{\lambda} = \lambda + \frac{\lambda\sin\gamma}{\sin(2\vartheta - \gamma)} > \bar{\Phi}.
\end{equation*}
\item[\rm(3)]
If \(\vartheta \in (\pi/4 + \gamma, \pi/4 + \gamma + \beta/2]\), then \(\Gamma_{\lambda, \vartheta}^{2B}\subset\hat{T}_{\hat{\lambda}, \hat{\vartheta}}\) with
\begin{equation*}
\hat{\vartheta} = \pi - 2\vartheta + 2\gamma \in [\pi/2 - \beta, \pi/2) \; \text{ and } \; \hat{\lambda} = \frac{\lambda\sin\vartheta}{\sin(\vartheta - \gamma)} > \Psi_{2}.
\end{equation*}
\end{enumerate}
By the definition of \(\bar{\Phi}\) and applying \autoref{lma303}, we conclude that the boundary condition of \({w}^{\lambda, \vartheta}\) is satisfied on \(\Gamma_{\lambda, \vartheta}^{2B}\).

Step 3.2.
\eqref{LY343} and \eqref{LY303a} hold for \(\vartheta \in (0, \pi/4]\) and \(\lambda \geq \bar{\Phi} - \varepsilon_{2}\). This follows directly from the argument used in Step 1.1 of \autoref{lma303}.

Step 3.3.
\eqref{LY343} and \eqref{LY303a} to \(\vartheta \in (0, \min\{\pi/2, \tilde{\vartheta}_{0}\}]\) and \(\lambda \geq \bar{\Phi} - \varepsilon_{2}\). This is achieved by the same reasoning as in Step 1.2 of \autoref{lma303}.

Step 3.4.
Finally, \eqref{LY343} and \eqref{LY303a} hold for \(\vartheta \in (0, \tilde{\vartheta}_{0}]\) and \(\lambda \geq \bar{\Phi} - \varepsilon_{2}\). Again, this follows from arguments similar to those in Step 1.2 of \autoref{lma303}.

\textbf{Step 4}.
We show that \eqref{LY303a} holds for \(\vartheta = \tilde{\vartheta}_{*}\) and \(\lambda \geq \bar{\Phi} - \varepsilon_{3}\) for some \(\varepsilon_{3} \in (0, \varepsilon_{2})\).
Here, we define \(\tilde{\vartheta}_{*} = \tilde{\vartheta}_{\kappa}\) where the integer \(\kappa\) is chosen such that \(\tilde{\vartheta}_{\kappa} \geq 2\gamma > \tilde{\vartheta}_{\kappa - 1}\), i.e.,
\(\tilde{\vartheta}_{\kappa + 1} \geq \gamma + \pi/2 > \tilde{\vartheta}_{\kappa }\), and
\begin{equation*}
\tilde{\vartheta}_{0} = \max\{\pi/4 + \gamma, \bar{\vartheta} + \delta\}, \quad
\tilde{\vartheta}_{j} = (\pi + \tilde{\vartheta}_{j - 1})/2, \; j \in \mathbb{N}^{+}.
\end{equation*}
Since the proof follows the same argument as in Step 2 of \autoref{lma308}, the details are omitted.

\textbf{Step 5}.
We conclude that \eqref{LY303a} holds for \(\vartheta \in (0, \pi/2 + \gamma]\) and \(\lambda \geq \bar{\Phi} - \varepsilon_{3}\).
Since this follows the same reasoning as Step 3 in \autoref{lma308}, we omit the details.

By combining these steps, we obtain a contradiction to the definition of \(\bar{\Phi}\). Therefore, we must have \(\bar{\Phi} = 0\). This completes the proof.
\end{proof}

We conclude this section by noting that \autoref{thm103} follows from \autoref{thm309} and \autoref{thm310}.


\section{The symmetry property in an isosceles triangle} \label{Sec04iso}

In this section, we consider the isosceles case and present a refined version of \autoref{thm102}.

\begin{thm} \label{thm401}
Let \({u}\) be a positive solution of \eqref{LY101} in an isosceles triangle \(\Omega\) such that \(\Gamma_{N}\) consists of the two equal sides (i.e., \(\alpha = \beta\)). Then \({u}\) is symmetric with respect to the \({x}_{1}\)-axis. More precisely,
\begin{equation*}
\partial_{x_{1}}{u} < 0 \text{ in } \Omega \cup ( \Gamma_{N}^{ + } \cup \Gamma_{N}^{ - } ), \quad \text{and} \quad x_{2} \partial_{x_{2}}{u} < 0 \text{ in } \Omega \cap \{{x}_{2} \neq 0\}.
\end{equation*}
\end{thm}

\begin{proof}
Define
\begin{equation*}
\bar{\Lambda}: = \inf\{\Lambda > 0: \; \text{\eqref{LY303a} and \eqref{LY303b} hold for } \vartheta \in [\gamma/2, \pi/2 + \gamma/2], \;
\lambda > \Lambda\}.
\end{equation*}
By Step 1 of \autoref{lma308}, this infimum is well defined and satisfies \(\bar{\Lambda} < \Phi_{0}\). To prove the theorem, it suffices to show that \(\bar{\Lambda} = 0\). We argue by contradiction and assume that \(\bar{\Lambda} > 0\).
By continuity, for all \(\vartheta \in [\gamma/2, \pi/2 + \gamma/2]\) and \(\lambda \geq \bar{\Lambda}\), we have
\begin{equation*}
\nabla {u} \cdot \mathbf{e}_{\vartheta + \alpha} \geq 0 \text{ on } \Omega \cap {T}_{\lambda, \vartheta}, \quad \text{and} \quad
\nabla {u} \cdot \mathbf{e}_{ - \vartheta - \beta} \geq 0 \text{ on } \Omega \cap \hat{T}_{\lambda, \vartheta}.
\end{equation*}
Moreover,
\begin{equation*}
{w}^{\lambda, \gamma/2} > 0 \text{ in } {D}_{\lambda, \gamma/2} \quad \text{and} \quad
{w}^{\lambda, \pi/2 + \gamma/2} > 0 \text{ in } {D}_{\lambda, \pi/2 + \gamma/2},
\end{equation*}
Thus, \eqref{LY303a} and \eqref{LY303b} hold for \(\vartheta \in [\gamma/2, \pi/2 + \gamma/2]\) and \(\lambda \geq \bar{\Lambda}\). By \autoref{lma307}, we obtain
\begin{equation*}
{w}^{\lambda, \pi/2} > 0 \text{ in } \overline{{D}_{\lambda, \pi/2}} \setminus {T}_{\lambda, \pi/2} \quad \text{for all } \lambda \geq \bar{\Lambda}.
\end{equation*}

Let \(\bar{\vartheta}\) be the angle such that \({T}_{\bar{\Lambda}, \bar{\vartheta}}\) passes through the upper mixed boundary point. Since \(\bar{\Lambda} < \Phi_{0}\), we have \(\bar{\vartheta} \in (\gamma, \pi - \alpha)\). Choose a small constant \(\delta > 0\) such that
\begin{equation} \label{LY402a}
0 < \delta < \min\big\{ \tfrac{\bar{\vartheta} - \gamma}{3}, \; \tfrac{\pi + \gamma - 2\bar{\vartheta}}{10}, \; \tfrac{2\pi + 3\gamma - 4\bar{\vartheta}}{4} \big\}.
\end{equation}
We then choose \(\varepsilon_{1} > 0\) small enough so that
\begin{equation} \label{LY402b}
\hat{\lambda}(\lambda, \vartheta) > \bar{\Lambda} \; \text{ and } \; \check{\lambda}(\lambda, \vartheta) > \bar{\Lambda} \;
\text{ for } |\vartheta - \bar{\vartheta}| \leq \delta, \; \lambda \geq \bar{\Lambda} - \varepsilon_{1}.
\end{equation}
By continuity, there exists a small constant \(\varepsilon_{2} \in (0, \varepsilon_{1})\) such that \eqref{LY303a} holds for
\begin{equation*}
\vartheta \in [\gamma, \bar{\vartheta} - \delta] \cup [\bar{\vartheta} + \delta, \pi/2 + \gamma/2] \; \text{ and } \; \lambda \geq \bar{\Lambda} - \varepsilon_{2}.
\end{equation*}
Furthermore, applying the same reasoning as in Step 2 of \autoref{lma303}, we conclude that \eqref{LY303a} also holds for \(\vartheta \in [\gamma/2, \gamma]\) and \(\lambda \geq \bar{\Lambda} - \varepsilon_{2}\).
Now for \(|\vartheta - \bar{\vartheta}| < \delta\) and \(\lambda \geq \bar{\Lambda} - \varepsilon_{2}\), take \(\vartheta_{1} = \bar{\vartheta} - 3\delta\) and \(\vartheta_{3} = 2\vartheta - \vartheta_{1}\). The condition \eqref{LY402a} ensures that
\begin{equation*}
\vartheta_{1} \in [\gamma, \bar{\vartheta} - \delta] \quad \text{and} \quad \vartheta_{3} \in [\bar{\vartheta} + \delta, \pi/2 + \gamma/2].
\end{equation*}
From \eqref{LY402b}, we find that \(\Gamma_{\lambda, \vartheta}^{2B}\subset \hat{T}_{\hat{\lambda}, \hat{\vartheta}} = {T}_{\check{\lambda}, \check{\vartheta}}\) with
\begin{align*}
\hat{\lambda} > \bar{\Lambda} & \; \text{ and } \; \hat{\vartheta} = \pi - 2\vartheta + 2\gamma \in [\gamma/2, \pi/2 + \gamma/2] \text{ if } \vartheta \geq \pi/4 + 3\gamma/4,
\\
\check{\lambda} > \bar{\Lambda} & \; \text{ and } \; \check{\vartheta} = 2\vartheta - \gamma \in [\gamma/2, \pi/2 + \gamma/2] \text{ if } \vartheta < \pi/4 + 3\gamma/4.
\end{align*}
Hence, \({w}^{\lambda, \vartheta}\) satisfies the strict boundary conditions on both \(\Gamma_{\lambda, \vartheta, \vartheta_{1}}^{2A}\) and \(\Gamma_{\lambda, \vartheta, \vartheta_{1}}^{2B}\), and satisfies \eqref{LY316}. Applying \autoref{lma301}, we conclude that \eqref{LY305b} and \eqref{LY303a} hold for all \(\lambda \geq \bar{\Lambda} - \varepsilon_{2}\), \(\vartheta \in (\bar{\vartheta} - \delta, \bar{\vartheta} + \delta)\) and \(\vartheta_{1} = \bar{\vartheta} - 3\delta\).

In summary, \eqref{LY303a} holds for \(\vartheta \in [\gamma/2, \pi/2 + \gamma/2]\) and
\(\lambda \geq \bar{\Lambda} - \varepsilon_{2}\).
Similarly, \eqref{LY303b} remains valid for \(\vartheta \in [\gamma/2, \pi/2 + \gamma/2]\) and \(\bar{\Lambda} - \lambda\ll1\). This contradicts the minimality of \(\bar{\Lambda}\), hence \(\bar{\Lambda} = 0\). Consequently, both \eqref{LY303a} and \eqref{LY303b} hold for \(\vartheta \in [\gamma/2, \pi/2 + \gamma/2]\) and
\(\lambda > 0\).

To conclude the symmetry, observe that
\begin{equation*}
{w}^{\lambda, \gamma/2}( {x} ) = {u}({x}_{1}, - 2\lambda\sin(\gamma/2) - x_{2}) - {u}({x}_{1}, x_{2}) > 0 \text{ for } {x} \in \Omega \text{ with } x_{2} < - \lambda\sin(\gamma/2).
\end{equation*}
Similarly, one can deduce that
\begin{equation*}
{u}({x}_{1}, 2\lambda\sin(\gamma/2) - x_{2}) - {u}({x}_{1}, x_{2}) > 0 \text{ for } {x} \in \Omega \text{ with } x_{2} > \lambda\sin(\gamma/2).
\end{equation*}
Taking the limit as \(\lambda \to 0^{ + }\), we obtain
\begin{equation*}
{u}({x}_{1}, - x_{2}) - {u}({x}_{1}, x_{2}) = 0 \; \text{ for } {x} \in \Omega,
\end{equation*}
which proves the symmetry of \({u}\) with respect to the \(x_{1}\)-axis.
\end{proof}


\section{The proof of monotonicity for an obtuse Neumann vertex} \label{Sec05obt}

In this section, we establish the monotonicity property of solutions when the two Neumann boundary components meet at an obtuse angle.

\begin{lma} \label{lma501}
Let \(\gamma > \pi/2\) and
\begin{equation*} 
\max\{\alpha, \beta\} \geq \pi/4.
\end{equation*}
Then the solution \({u}\) is strictly decreasing in the inward normal direction to the longer Neumann side, and \({u}\) has a nonzero tangential derivative in the interior of the shorter Neumann boundary.
\end{lma}

\begin{proof}
Without loss of generality, we assume that \(\alpha \geq \beta\), which implies
\begin{equation} \label{LY501b}
\alpha \geq \pi/4.
\end{equation}

\textbf{Part 1}.
We claim that \eqref{LY303a} holds for \(\vartheta \in [\pi/2 - \alpha, \pi/2]\) and \(\lambda \geq 0\).
To prove this, we argue by contradiction and assume that \(\bar{\Phi} > 0\), where we set
\begin{equation*} 
\bar{\Phi}: = \inf \{\Phi > 0: \; \eqref{LY303a} \text{ holds for } \vartheta \in [\pi/2 - \alpha, \pi/2] \text{ and } \lambda > \Phi\}.
\end{equation*}
By Step 1 of \autoref{lma308}, this infimum is well-defined and satisfies \(\bar{\Phi} < \Phi_{0}\). For \(\vartheta = \pi/2\) and \(\lambda \geq \bar{\Phi}/2\), we have \(\Gamma_{\lambda, \pi/2}^{2B}\subset {T}_{2\lambda, \pi - \gamma}\) and \(\pi - \gamma \in [\pi/2 - \alpha, \pi/2]\). Therefore, \({w}^{\lambda, \pi/2}\) satisfies the boundary condition \eqref{LY316}. By \autoref{lma301}, both \eqref{LY305a} and \eqref{LY303a} hold for \(\vartheta = \pi/2\) and \(\lambda \geq \bar{\Phi}/2\).
Moreover, the condition \eqref{LY501b} implies that \(\pi/4 \geq \pi/2 - \alpha\). We can apply the arguments from Step 1.1 and Step 2 of \autoref{lma303} to deduce that \eqref{LY303a} with \(\lambda \geq \bar{\Phi}/2\) holds for \(\vartheta \in [\pi/4, \pi/2]\) and for \(\vartheta \in [\pi/2 - \alpha, \pi/4]\). This contradicts the definition of \(\bar{\Phi}\). Hence, \(\bar{\Phi} = 0\), and \eqref{LY303a} holds for all \(\vartheta \in [\pi/2 - \alpha, \pi/2]\) and \(\lambda \geq 0\). This completes Part 1.

\begin{figure}[h]\centering
\begin{tikzpicture}[scale = 1.6]
\pgfmathsetmacro\AngleA{48}; \pgfmathsetmacro\AngleB{33}; \pgfmathsetmacro\xab{1.0};
\pgfmathsetmacro\AngleC{180 - \AngleA - \AngleB};
\pgfmathsetmacro\xA{\xab}; \pgfmathsetmacro\yA{ - \xab*tan(90 - \AngleA)};
\pgfmathsetmacro\xB{\xab}; \pgfmathsetmacro\yB{\xab*tan(90 - \AngleB)};
\pgfmathsetmacro\PHI{\xab/sin(\AngleA)}; \pgfmathsetmacro\PSI{\xab/sin(\AngleB)};
\pgfmathsetmacro\xBB{\PSI*cos(90 - \AngleB - 2*\AngleC)}; \pgfmathsetmacro\yBB{\PSI*sin(90 - \AngleB - 2*\AngleC)};
\pgfmathsetmacro\THETA{\AngleC}; \pgfmathsetmacro\LAMBDA{\PHI*0.6};
\pgfmathsetmacro\xP{\LAMBDA*sin(\AngleA)}; \pgfmathsetmacro\yP{ - \LAMBDA*cos(\AngleA)};
\pgfmathsetmacro\xQ{\xab}; \pgfmathsetmacro\yQ{\yA + (\PHI - \LAMBDA)/cos(\AngleA)};
\pgfmathsetmacro\llm{(\PHI - \LAMBDA)*sin(\THETA)/sin(\THETA + \AngleA)};
\pgfmathsetmacro\xM{\xab}; \pgfmathsetmacro\yM{\yA + \llm};
\pgfmathsetmacro\xAA{\xM + \llm*cos(2*\THETA + 2*\AngleA - 90)};
\pgfmathsetmacro\yAA{\yM + \llm*sin(2*\THETA + 2*\AngleA - 90)};
\pgfmathsetmacro\lln{(\PHI - \LAMBDA)*sin(\THETA)/sin(\THETA - \AngleA)};
\pgfmathsetmacro\xN{\xA + \lln*cos(90 + 2*\AngleA)}; \pgfmathsetmacro\yN{\yA + \lln*sin(90 + 2*\AngleA)};
\fill[gray, yellow, draw = black] (\xM, \yM) -- (\xN, \yN) -- (\xAA, \yAA) -- cycle;
\fill[gray, green, draw = black] (\xM, \yM) -- (\xN, \yN) -- (\xA, \yA) -- cycle;
\draw[dashed] (\xab, \yB) -- ( - \xab*0.85, - \yB*0.85) node [right] {\small $\hat{T}_{0, 0}$};
\draw[thick] (0, 0) node [above left] {${z}_{0}$} -- (\xab, \yA) node [right] {${z}_{1}$} -- (\xab, \yB) node [below right] {${z}_{2}$} -- cycle;
\draw[] (0, 0) -- (\xBB, \yBB) -- (\xab, \yA);
\draw[red, very thick] ({\xM + 0.2*(\xM - \xN)}, {\yM + 0.2*(\yM - \yN)}) -- ({\xM - 1.25*(\xM - \xN)}, {\yM - 1.25*(\yM - \yN)}) node [right] {\small ${T}_{\lambda, \gamma}$};
\end{tikzpicture}
\hspace*{2em}
\begin{tikzpicture}[scale = 1.6]
\pgfmathsetmacro\AngleA{48}; \pgfmathsetmacro\AngleB{33}; \pgfmathsetmacro\xab{1.0};
\pgfmathsetmacro\AngleC{180 - \AngleA - \AngleB};
\pgfmathsetmacro\yA{ - \xab*tan(90 - \AngleA)}; \pgfmathsetmacro\yB{\xab*tan(90 - \AngleB)};
\pgfmathsetmacro\xA{\xab}; \pgfmathsetmacro\xB{\xab};
\pgfmathsetmacro\PHI{\xab/sin(\AngleA)}; \pgfmathsetmacro\PSI{\xab/sin(\AngleB)};
\pgfmathsetmacro\xBB{\PSI*cos(90 - \AngleB - 2*\AngleC)}; \pgfmathsetmacro\yBB{\PSI*sin(90 - \AngleB - 2*\AngleC)};
\pgfmathsetmacro\THETA{\AngleC}; \pgfmathsetmacro\LAMBDA{\PHI*0.35};
\pgfmathsetmacro\xP{\LAMBDA*sin(\AngleA)}; \pgfmathsetmacro\yP{ - \LAMBDA*cos(\AngleA)};
\pgfmathsetmacro\xQ{\xab}; \pgfmathsetmacro\yQ{\yA + (\PHI - \LAMBDA)/cos(\AngleA)};
\pgfmathsetmacro\llm{(\PHI - \LAMBDA)*sin(\THETA)/sin(\THETA + \AngleA)};
\pgfmathsetmacro\xM{\xab}; \pgfmathsetmacro\yM{\yA + \llm};
\pgfmathsetmacro\xAA{\xM + \llm*cos(2*\THETA + 2*\AngleA - 90)};
\pgfmathsetmacro\yAA{\yM + \llm*sin(2*\THETA + 2*\AngleA - 90)};
\pgfmathsetmacro\lln{(\PHI - \LAMBDA)*sin(\THETA)/sin(\THETA - \AngleA)};
\pgfmathsetmacro\xN{\xA + \lln*cos(90 + 2*\AngleA)}; \pgfmathsetmacro\yN{\yA + \lln*sin(90 + 2*\AngleA)};
\pgfmathsetmacro\llr{(\xab*tan(90 - \AngleA) + \xab*tan(90 - \AngleB) - \llm)*sin(\AngleB)/sin(360 - 2*\THETA - 2*\AngleA - \AngleB)};
\pgfmathsetmacro\xR{\xab}; \pgfmathsetmacro\yR{\yM - \llr};
\pgfmathsetmacro\xRR{\xM + \llr*cos(2*\THETA + 2*\AngleA - 90)}; \pgfmathsetmacro\yRR{\xRR*tan(90 - \AngleB)};
\pgfmathsetmacro\lls{(\xab/sin(\AngleA)*sin(\AngleC)/sin(\AngleC - \AngleA) - \lln)*sin(\AngleC - \AngleA)/sin(2*\THETA - \AngleC - \AngleA)};
\pgfmathsetmacro\xS{\xN + \lls*cos(2*\AngleA - 90)}; \pgfmathsetmacro\yS{\yN + \lls*sin(2*\AngleA - 90)};
\pgfmathsetmacro\xSS{\xN + \lls*cos(2*\THETA - 90)}; \pgfmathsetmacro\ySS{\xSS*tan(90 - \AngleB)};
\fill[gray, yellow, draw = black] (\xM, \yM) -- (\xN, \yN) -- (\xSS, \ySS) -- (\xRR, \yRR) -- cycle;
\fill[gray, green, draw = black] (\xM, \yM) -- (\xN, \yN) -- (\xS, \yS) -- (\xR, \yR) -- cycle;
\draw[dashed] (\xab, \yB) -- ( - \xab*0.85, - \yB*0.85) node [right] {\small $\hat{T}_{0, 0}$};
\draw[thick] (0, 0) node [above left] {${z}_{0}$} -- (\xab, \yA) node [right] {${z}_{1}$} -- (\xab, \yB) node [below right] {${z}_{2}$} -- cycle;
\draw[] (0, 0) -- (\xBB, \yBB) -- (\xab, \yA);
\draw[red, very thick] ({\xM + 0.2*(\xM - \xN)}, {\yM + 0.2*(\yM - \yN)}) -- ({\xM - 1.16*(\xM - \xN)}, {\yM - 1.16*(\yM - \yN)}) node [right] {\small ${T}_{\lambda, \gamma}$};
\end{tikzpicture} \vspace*{ - 2ex}
\caption{The case for $\gamma > \pi/2$ and $\alpha \geq \pi/4$}
\label{fig37gamma}
\end{figure}
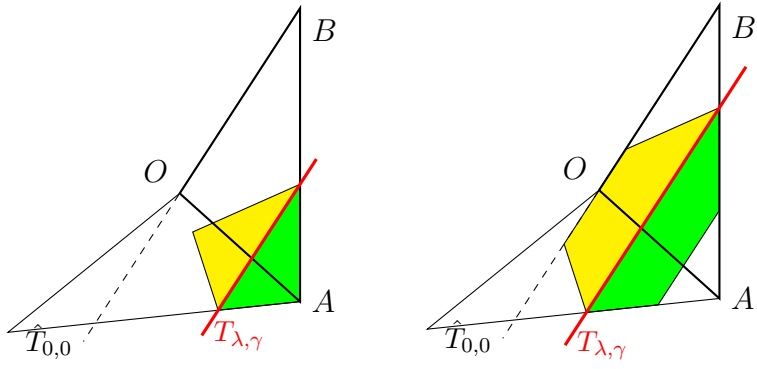

\textbf{Part 2}.
We now claim that \eqref{LY303a} holds for \(\vartheta = \gamma\) and \(\lambda > 0\).
To prove this, define \(\tilde{u}\) by reflecting \({u}\) across the line \({T}_{0, 0}\), which contains the lower Neumann boundary \(\Gamma_{N}^{ - }\), as follows:
\begin{equation} \label{LY504}
\tilde{u}( {x} ) =
\begin{cases}
{u}( {x} ), & \text{ if } {x} \in \overline{\Omega},
\\
{u}({x}^{0, 0}), & \text{ if }{x}^{0, 0} \in \overline{\Omega},
\end{cases}
\end{equation}
where \({x}^{0, 0}\) denotes the reflection of \({x}\) with respect to \(\Gamma_{N}^{ - }\). Then \(\tilde{u}\) is a positive solution of \eqref{LY101} in the doubled domain \(\tilde{\Omega}: = \Omega \cup \Omega' \cup \Gamma_{N}^{ - }\), where \(\Omega'\) is the reflection of \(\Omega\) with respect to \(\Gamma_{N}^{ - }\); see \autoref{fig37gamma}.
From the assumptions \(\gamma > \pi/2\) and \(\alpha \geq \pi/4\), it follows that \(\pi - \gamma \in [\pi/2 - \alpha, \pi/2]\). Thus, by Part 1, we obtain that \(\nabla {u} \cdot \mathbf{e}_{\alpha - \gamma} < 0\) on \(\Omega \cap {T}_{0, \pi - \gamma}\), that is,
\begin{equation*}
\nabla \tilde{u} \cdot \mathbf{e}_{ - \beta} < 0 \text{ on } \tilde{\Omega} \cap \hat{T}_{0, 0},
\end{equation*}
where \(\hat{T}_{0, 0}\) is the line containing \(\Gamma_{N}^{ + }\). Consequently, \(\tilde{u}\) satisfies
\begin{equation*}
\begin{cases}
\Delta \tilde{u} + {f}(\tilde{u}) = 0 & \text{ in } \tilde{\Omega} \cap \{{x} \cdot \mathbf{e}_{ - \beta} > 0\},
\\
\tilde{u} = 0 & \text{ on } \partial\tilde{\Omega} \cap \{{x} \cdot \mathbf{e}_{ - \beta} > 0\},
\\
\nabla\tilde{u} \cdot \mathbf{e}_{ - \beta} \leq 0 & \text{ on } \overline{\tilde{\Omega}} \cap \{{x} \cdot \mathbf{e}_{ - \beta} = 0\}.
\end{cases}
\end{equation*}
Applying the moving plane method in the direction \(\mathbf{e}_{ - \beta}\), it is easy to derive that
\begin{gather*}
\tilde{u}({x}^{\lambda, \gamma}) - \tilde{u}( {x} ) > 0 \text{ for } {x} \in \tilde{\Omega} \text{ with } \lambda\sin\gamma < {x} \cdot \mathbf{e}_{ - \beta} \leq 2\lambda\sin\gamma,
\\
\nabla \tilde{u} \cdot \mathbf{e}_{ - \beta} < 0 \text{ on } \tilde{\Omega} \cap {T}_{\lambda, \gamma}
\end{gather*}
for every \(\lambda > 0\). In particular, this implies that \(\nabla {u} \cdot \mathbf{e}_{ - \beta} < 0 \) in \(\Omega\). Therefore, \eqref{LY303a} holds for all \(\vartheta \in [\pi/2 - \alpha, \gamma]\) and \(\lambda > 0\), and \eqref{LY303b} holds for all \(\vartheta \in [\pi - \beta, \pi]\) and \(\lambda \geq \Psi_{0}\). This completes the proof.
\end{proof}

\begin{lma} \label{lma502}
Let \(\gamma > \pi/2\) and
\begin{equation*} 
\gamma - \pi/2 \geq \min\{\alpha, \beta\}.
\end{equation*}
Then the solution \({u}\) is strictly decreasing in the inward normal direction to the longer Neumann side, and \({u}\) has a nonzero tangential derivative in the interior of the shorter Neumann boundary.

In particular, the conclusion holds if \(\gamma \geq 2\pi/3\).
\end{lma}

\begin{proof}
The isosceles case (i.e., \(\alpha = \beta\)) has been established in \autoref{thm401}. Without loss of generality, we assume that \(\alpha > \beta\), which implies
\begin{equation} \label{LY506b}
\gamma \geq \pi/2 + \beta.
\end{equation}
To prove the result, we define
\begin{equation*} 
\bar{\Phi}: = \inf \{\Phi > 0: \; \text{ \eqref{LY303a} holds for \(\vartheta \in [\pi/2 - \alpha, \gamma]\) and \(\lambda > \Phi\)} \}.
\end{equation*}
By Step 1 of \autoref{lma308}, this infimum is well-defined and satisfies \(\bar{\Phi} < \Phi_{0}\). Suppose, for contradiction, that \(\bar{\Phi} > 0\).

\textbf{Part 1}.
We first show that \eqref{LY303a} holds for all \(\vartheta \in [\pi/2 - \alpha, \gamma]\) and \(\lambda \geq \bar{\Phi}\).
In fact, by continuity and the strong maximum principle applied to the angular derivative of \({u}\), we obtain \eqref{LY303a} for all \(\vartheta \in (\pi/2 - \alpha, \gamma)\) and \(\lambda \geq \bar{\Phi}\); see Step 1 of \autoref{thm309}. For the endpoint \(\vartheta = \pi/2 - \alpha\), we take \(\vartheta_{1} = 0\), \(\vartheta_{3} = \pi - 2\alpha\). Note that \eqref{LY506b} implies that \(\vartheta_{3} \in [\pi/2 - \alpha, \gamma]\) and \(\Gamma_{\lambda, \vartheta}^{2B} = \emptyset\). Since \(\lambda \geq \bar{\Phi}\), we have \({w}^{\lambda, \vartheta}\) satisfies \eqref{LY316}. Hence, both \eqref{LY305a} and \eqref{LY303a} hold for \(\vartheta = \pi/2 - \alpha\) and \(\lambda \geq \bar{\Phi}\).
For the other endpoint \(\vartheta = \gamma\), we take \(\vartheta_{3} = \pi\), \(\vartheta_{1} = 2\gamma - \pi\), so that \(\Gamma_{\lambda, \gamma}^{2B}\subset {T}_{2\lambda, \gamma}\) and \(\Gamma_{\lambda, \gamma}^{2A}\subset {T}_{\lambda, 2\gamma - \pi}\). By \eqref{LY506b}, \(2\gamma - \pi \in [\pi/2 - \alpha, \gamma]\), and so \({w}^{\lambda, \vartheta}\) satisfies \eqref{LY316}. Thus, \eqref{LY305a} and \eqref{LY303a} hold for \(\vartheta = \gamma\) and \(\lambda \geq \bar{\Phi}\).

\textbf{Part 2}.
We prove that for \(\lambda \geq \bar{\Phi}\),
\begin{equation} \label{LY508}
\nabla {u} \cdot \mathbf{e}_{\alpha - \pi/2} < 0 \text{ on } \Gamma_{N}^{ - } \cap {T}_{\lambda, \pi/2}.
\end{equation}
Note that it is difficult to derive the positivity of the function \({w}^{\lambda, \pi/2}\), the argument employed in Step 2 of \autoref{lma302} is no longer applicable. We therefore adopt a different strategy based on the local asymptotic analysis of the angular derivative developed in \cite[Lemma 3.7]{Yao26}. The required details are presented below.
Let \(\lambda \in [\bar{\Phi}, \Phi_{0})\), and define
\begin{equation*}
\bar{x}: = (\bar{x}_{1}, \bar{x}_{2}), \quad \text{with } \; \bar{x}_{1} = \lambda\sin\alpha \text{ and } \bar{x}_{2} = - \lambda\cos\alpha.
\end{equation*}
The angular derivative \({R}_{\bar{x}}{u}\) about \(\bar{x}\), defined in \eqref{LY331a}, satisfies
\begin{equation*}
\Delta {R}_{\bar{x}}{u} + {f}'({u}) {R}_{\bar{x}}{u} = 0
\text{ in } \Omega, \quad \text{and} \quad {R}_{\bar{x}}{u} = 0 \text{ on } \Gamma_{N}^{ - }.
\end{equation*}
Let \(({r}, \vartheta)\) denote polar coordinates centered at \(\bar{x}\), where \(\vartheta\) is the polar angle measured from the ray \(\bar{x}{z}_{1}\) to the ray \(\bar{x}{x}\). From \cite{HW53, HHHO99}, there exist an integer \({l} \geq 1\) and a constant \({C}_{0} \in \R \setminus \{0\}\), such that
\begin{equation} \label{LY509a}
({R}_{\bar{x}}{u})({r}, \vartheta) = {C}_{0}{r}^{l}\sin(l\vartheta) + O({r}^{l + 1}),
\end{equation}
with \({R}_{\bar{x}}{u}\) has exactly \({l} - 1\) nodal domains near \(\bar{x}\) in \(\Omega\). Here, the term \(O({r}^{l + 1})/{r}^{l + 1}\) is bounded as \({r}\to0\).
By Part 1, we have that
\begin{equation*} 
({R}_{\bar{x}}{u})({r}, \vartheta) > 0 \; \text{ for } \vartheta \in [\pi/2 - \alpha, \gamma],
\end{equation*}
so combining with \eqref{LY509a} yields
\begin{equation} \label{LY509c}
{C}_{0}\sin({l}\vartheta) > 0 \; \text{ for } \vartheta \in (\pi/2 - \alpha, \gamma).
\end{equation}
Therefore, the interval \((\pi/2 - \alpha, \gamma)\) must be contained in a nodal interval of the function \(\sin({l}\vartheta)\), whose length is at most \(\pi/{l}\). Combining this with the fact that
\begin{equation} \label{LY506c}
\gamma > \pi/2 \quad \text{and} \quad \alpha > \beta,
\end{equation}
we deduce that \(\beta < (\alpha + \beta)/2 = (\pi - \gamma)/2 < \pi/4\), and
\begin{equation*}
{l} \leq \frac{\pi}{\gamma - (\pi/2 - \alpha)} = \frac{\pi}{\pi/2 - \beta} < 4.
\end{equation*}
Since \(\pi/2 \in (\pi/2 - \alpha, \gamma)\), it follows from \eqref{LY509c} that \({C}_{0}\sin({l}\pi/2) > 0\), so \({l}\) is odd.

We claim that \({l} \neq 3\). Indeed, if \({l} = 3\), then \eqref{LY509c} becomes \({C}_{0}\sin (3\vartheta) > 0\) for \(\vartheta \in (\pi/2 - \alpha, \gamma)\).
Combining this with the fact that the interval \((3(\pi/2 - \alpha), 3\gamma)\) contains \(3\pi/2\), we conclude that \({C}_{0} < 0\) and
\begin{equation*}
3(\pi/2 - \alpha) \geq \pi, \quad 3\gamma \leq 2\pi,
\end{equation*}
that is, \(\alpha \leq \pi/6\) and \(\gamma \leq 2\pi/3\). From this and the condition \(\alpha > \beta\) in \eqref{LY506c}, we obtain \(\alpha > (\alpha + \beta)/2 = (\pi - \gamma)/2 > \pi/6\), which yields a contradiction.

Since \({l} < 4\), \({l}\) is odd, and \({l}\neq3\), we conclude \({l} = 1\) and \({C}_{0} > 0\). Therefore, \({R}_{\bar{x}}{u} > 0\) near \(\bar{x}\) in \(\Omega\). Applying Hopf lemma to \({R}_{\bar{x}}{u}\), it follows that the outward normal derivative of \({R}_{\bar{x}}{u}\) at \(\bar{x}\) is negative, i.e., \(\partial_{\nu}{R}_{\bar{x}}{u}(\bar{x}) < 0\). Therefore, \(\eqref{LY508}\) holds.

\textbf{Part 3}.
We show that \eqref{LY303a} holds for \(\vartheta \in [\pi/2 - \alpha, \gamma]\) and \(\bar{\Phi} - \lambda \ll1\).
Since \({u}\) is strictly monotone near the lower Neumann boundary (by \(\eqref{LY508}\)), near the Dirichlet boundary (see \autoref{lma203GNN}), and in the interior of \(\Omega\) (by Part 1), it follows by continuity that \(\eqref{LY303a}\) holds for
\begin{equation*}
\vartheta \in [\pi - 2\alpha, \gamma] \quad \text{and} \quad \lambda \geq \bar{\Phi} - \varepsilon
\end{equation*}
for some small constant \(\varepsilon > 0\). Using the same argument as in Step 2 of \autoref{lma303}, we deduce that \eqref{LY303a} holds for \(\vartheta \in [\pi/2 - \alpha, \pi - 2\alpha]\) and \(\lambda \geq \bar{\Phi} - \varepsilon\). This contradicts the definition of \(\bar{\Phi}\), and we conclude \(\bar{\Phi} = 0\). This completes the proof.
\end{proof}

We now show that the global maximum of \({u}\) may not be attained at the vertex of the domain \(\Omega\).

\begin{lma} \label{lma503}
Let \(\gamma > \pi/2\), and suppose that \(\alpha > \beta\) and
\begin{equation} \label{LY511}
\tilde{v}: = \nabla {u} \cdot \mathbf{e}_{ - \beta} \leq 0 \text{ in } \Omega.
\end{equation}
Then the global maximum of \({u}\) is not attained at the vertices of the triangle \(\Omega\).
\end{lma}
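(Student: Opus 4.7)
Since $A$ and $B$ lie on the Dirichlet side $\Gamma_{D}$, we have $u(A)=u(B)=0$, strictly less than the positive values of $u$ in the interior. Hence neither $A$ nor $B$ can be a global maximum, and the content of the lemma is to rule out the Neumann vertex $O$. I argue by contradiction, supposing that $M:=u(O)=\max_{\overline{\Omega}}u$, and I plan to derive a contradiction through a careful local analysis at the conical point $O$, where the two Neumann sides meet at the obtuse angle $\gamma>\pi/2$.

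Introduce polar coordinates $(r,\theta)$ centred at $O$, with $\theta=0$ along $OB$ and $\theta=\gamma$ along $OA$. By the corner regularity theory of \cite{Gri11} applied to $\Delta(u-M)=-f(u)=-f(M)+O(|u-M|)$ together with the two Neumann conditions, one obtains the asymptotic expansion
\[
u(r,\theta)-M \;=\; -\tfrac{f(M)}{4}\,r^{2} + \sum_{k\ge 1} a_{k}\,r^{k\pi/\gamma}\cos\!\Big(\tfrac{k\pi\theta}{\gamma}\Big) + o(r^{2}),
\]
the first term being the particular radial solution of $\Delta w = -f(M)$ and the $r^{k\pi/\gamma}\cos(k\pi\theta/\gamma)$ being the homogeneous Neumann--Neumann modes at an angle-$\gamma$ corner. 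Because $\pi/\gamma\in(1,2)$, any nonzero $a_{1}$ would produce a leading term dominating $r^{2}$ and of both signs across $\theta\in(0,\gamma)$; this is incompatible with $u\le M$ near $O$, so $a_{1}=0$ and consequently $f(M)\ge 0$. Differentiating the expansion (and using that $e_{-\beta}$ is perpendicular to $OB$, i.e.\ lies at local angle $\pi/2$) gives
\[
\nabla u\cdot e_{-\beta}(r,\theta) \;=\; -\tfrac{f(M)}{2}\,r\sin\theta + o(r),
\]
so the strict hypothesis $\nabla u\cdot e_{-\beta}<0$ in $\Omega$, with $r\sin\theta>0$ in the angular sector, forces $f(M)>0$.

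The crucial step is to close the contradiction using the asymmetry $\alpha>\beta$. Introduce $v:=\nabla u\cdot e_{-\beta}$; then $v$ solves the linear elliptic equation $\Delta v+f'(u)v=0$ in $\Omega$, vanishes on $OB$ by the Neumann condition, and on $OA$ reduces via the Neumann condition $\nabla u\cdot e_{\alpha}=0$ to $v|_{OA}=\sin\gamma\cdot\partial_{\tau}u|_{OA}$. By \autoref{lma502} the tangential derivative $\partial_{\tau}u$ does not vanish on the interior of the shorter Neumann side $OA$; combined with $u(O)=M>0=u(A)$ this yields $v<0$ on $\operatorname{int}(OA)$. Thus the nodal set of $v$ at the conical point $O$ consists only of the branch $OB$, while $v<0$ strictly on both $\Omega$ and $\operatorname{int}(OA)$. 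Applying the Hartman--Wintner local classification of nodal lines at conical points \cite{HW53,HHT09} to $v$ pins down the leading angular mode of $v$ near $O$; matching it against the expansion $-\tfrac{f(M)}{2}\,r\sin\theta$ obtained above and against the asymmetric angular geometry (in which $\alpha\ne\beta$ breaks symmetry about the bisector $\theta=\gamma/2$) forces a constraint that cannot be satisfied, producing the desired contradiction.

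The main obstacle is this final incompatibility step: the preceding manipulations of the corner expansion are routine, but rigorously extracting a contradiction requires combining the Hartman--Wintner nodal classification with the Grisvard corner expansion to control higher-order angular modes of $v$ at $O$ and to pit them against the non-isosceles global shape of the triangle. Intuitively the locally radial leading behaviour $u-M\sim -\tfrac{f(M)}{4}r^{2}$ forced by a maximum at $O$ is inconsistent with the two Dirichlet endpoints $A$ and $B$ lying at distinct distances $\csc\alpha\ne\csc\beta$ from $O$, and the precise argument makes this intuition rigorous.
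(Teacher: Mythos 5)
Your proposal does not close the argument, and the route you sketch cannot close it in the form stated. The decisive step is left open by your own admission (``forces a constraint that cannot be satisfied'' / ``the main obstacle is this final incompatibility step''), and the intuition you offer for it is flawed: every quantity you compute at the corner --- the Grisvard expansion, the vanishing of the first mode $a_{1}$ under the contradiction hypothesis, the sign of $f(M)$, the leading behaviour of $\nabla u\cdot e_{-\beta}$ --- depends only on the opening angle $\gamma$, not on $\alpha$ versus $\beta$. In the isosceles case $\alpha=\beta$ the maximum \emph{is} attained at $O$ (\autoref{thm11}, \autoref{thm401}), and all of your local data at $O$ are then realized; so no purely local analysis at $O$, however refined, can produce the contradiction, and the appeal to ``$A$ and $B$ at distinct distances $\csc\alpha\neq\csc\beta$'' never enters your computation in a usable way. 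A secondary but genuine gap: you invoke \autoref{lma502} to get the non-vanishing tangential derivative on the shorter Neumann side, but that lemma assumes $\max\{\alpha,\beta\}\ge\pi/4$, which is not part of the hypotheses here ($\gamma>\pi/2$ forces $\alpha+\beta<\pi/2$, and $\alpha$ may well be below $\pi/4$); this information must instead be extracted from the hypothesis \eqref{LY511} itself.

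For comparison, the paper's proof makes the global input enter before the corner analysis. First, starting from \eqref{LY511} it runs the moving plane machinery (\autoref{lma301} with $\vartheta=\gamma/2$, $\vartheta_1=0$, $\vartheta_3=\gamma$, then the dyadic induction of \autoref{lma303}) to obtain \eqref{LY303a} for all $\lambda\ge 0$ and $\vartheta\in[\pi/2-\alpha,\gamma]$, together with strict tangential monotonicity on $\mathrm{Int}(\Gamma_N^-)$ via a reflection and the strong maximum principle. This yields positivity of the \emph{rotational} derivative about the vertex, $v(x)=x_{1}\partial_{x_2}u-x_{2}\partial_{x_1}u$, on the sector $\{0\le x_{2}<x_{1}\cot\beta\}$. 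Since this $v$ vanishes on \emph{both} Neumann sides (unlike your constant-direction derivative $\nabla u\cdot e_{-\beta}$, which vanishes only on $OB$), the Hartman--Wintner/Helffer--Hoffmann-Ostenhof--Terracini description of its nodal branches at $O$ applies cleanly: the $l+1$ branches must avoid the sector of positivity, giving $\gamma/l\ge\pi/2-\beta$, and the hypothesis $\alpha>\beta$ gives $\gamma<2(\pi/2-\beta)$, hence $l=1$. This forces the coefficient $c_{1}$ of the $r^{\omega}$-mode ($\omega=\pi/\gamma\in(1,2)$) in the expansion of $u$ at $O$ to be strictly positive, so $u-u(O)$ changes sign in every neighbourhood of $O$ and $O$ is not even a local extremum; together with $u=0$ at $A,B$ this proves the lemma. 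In short, the missing idea in your proposal is exactly this combination: upgrade \eqref{LY511} to angular monotonicity about $O$ in a sector wider than $\gamma/2$, and then count nodal branches of the rotational derivative to show the first corner mode does \emph{not} vanish --- rather than assuming it vanishes and hoping a local contradiction appears.
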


\begin{proof}
Since the nonpositive function \(\tilde{v}\) is negative near \(z_{1}\) (e.g., \autoref{lma303}) and satisfies \([\Delta + {f}'({u})] \tilde{v} = 0\) in \(\Omega\), the strong maximum principle implies that \(\tilde{v} < 0\) in \(\Omega\).
The proof is divided into two parts.

\textbf{Part 1}.
We first show that assumption \eqref{LY511} implies that \eqref{LY303a} holds for every \(\lambda \geq 0\) and for all \(\vartheta \in [\pi/2 - \alpha, \gamma]\). Moreover, it follows that
\begin{equation} \label{LY512}
\nabla {u} \cdot \mathbf{e}_{\alpha - \pi/2} < 0 \text{ on } \Gamma_{N}^{ - }.
\end{equation}
Indeed, since \(\alpha > \beta\), it follows that \(\gamma/2 > \pi/2 - \alpha\). For \(\vartheta = \gamma/2\), take \(\vartheta_{1} = 0\), \(\vartheta_{3} = \gamma\). Then, by \eqref{LY511} and for any \(\lambda \geq 0\), the function \({w}^{\lambda, \vartheta}\) satisfies the boundary condition \eqref{LY316}. Applying \autoref{lma301}, we deduce that both \eqref{LY305b} and \eqref{LY303a} hold. Following Step 1.1 of \autoref{lma303}, we conclude that \eqref{LY303a} holds for all \(\lambda \geq 0\) and \(\vartheta \in [\gamma/2, \gamma]\). A similar argument from Step 1.2 of \autoref{lma303} shows that \eqref{LY303a} remains valid for
\begin{equation*}
\vartheta \in \cup_{k = 1}^{ \infty}[\min\{2^{ - k}\gamma, \; \pi/2 - \alpha\}, \; \gamma].
\end{equation*}
In particular, \eqref{LY303a} holds for \(\lambda \geq 0\) and for \(\vartheta = \pi/2\), this is,
\begin{equation*}
\nabla {u}({x}) \cdot \mathbf{e}_{\alpha - \pi/2} < 0 \; \text{ for } {x} \in \Omega \text{ with } {x} \cdot \mathbf{e}_{\alpha - \pi/2} \geq 0.
\end{equation*}
Consequently, the even extension \(\tilde{u}\) of \({u}\) with respect to the Neumann boundary \(\Gamma_{N}^{ - }\), defined in \eqref{LY504}, satisfies
\begin{equation*}
\nabla \tilde{u}({x}) \cdot \mathbf{e}_{\alpha - \pi/2} \leq, \not\equiv 0 \; \text{ for } {x} \in \tilde{\Omega} \text{ with } {x} \cdot \mathbf{e}_{\alpha - \pi/2} \geq 0,
\end{equation*}
where \(\tilde{\Omega}\) denotes the reflected (or doubled) domain obtained by reflecting \(\Omega\) across the lower Neumann boundary \(\Gamma_{N}^{ - }\); see \autoref{fig37gamma}. Since the function \(\nabla \tilde{u} \cdot \mathbf{e}_{\alpha - \pi/2}\) satisfies the linear equation
\begin{equation*}
[\Delta + {f}'(\tilde{u})] (\nabla \tilde{u} \cdot \mathbf{e}_{\alpha - \pi/2}) = 0 \text{ in } \tilde{\Omega},
\end{equation*}
the strong maximum principle guarantees that \(\nabla \tilde{u} \cdot \mathbf{e}_{\alpha - \pi/2}\) must be negative in \(\tilde{\Omega} \cap \{{x} \cdot \mathbf{e}_{\alpha - \pi/2} \geq 0\}\). This proves \eqref{LY512}.

\textbf{Part 2}.
We now show that the origin \({z}_{0}\) is not an extremum point of \({u}\).

Let \(({r}, \theta)\) denote the standard polar coordinates centered at the origin. By classical regularity theory for solutions in domains with conical points (e.g., Theorem  6.4.2.5 in \cite{Gri11}), the expansion of \({u}\) near the origin is
\begin{equation} \label{LY515u}
{u}({r}, \theta) = {c}_{0} - {c}_{1} {r}^{\omega}\cos\big(\omega(\theta - \alpha + \pi/2)\big) - \tfrac{1}{4}{c}_{2} {r}^{2} + o({r}^{2})
\end{equation}
where \(\omega = \pi/\gamma \in (1, 2)\), \({c}_{0} = {u}({z}_{0}) > 0\), and \({c}_{2} = {f}({c}_{0})\). Define the angular derivative
\begin{equation}
{v}( {x} ): = x_{1}\partial_{x_{2}}{u}( {x} ) - x_{2}\partial_{x_{1}}{u}( {x} ).
\end{equation}
From Part 1, we know that \eqref{LY303a} holds for \(\lambda = 0\) and \(\vartheta \in [\pi/2 - \alpha, \gamma]\). That is
\begin{equation} \label{LY514}
{v}( {x} ) > 0 \; \text{ for } {x} \in \Omega \text{ with } 0 \leq x_{2} < x_{1}\cot\beta.
\end{equation}
Note that \({v}\) satisfies linear equation
\begin{equation*}
[\Delta + {f}'({u})] {v} = 0 \text{ in } \Omega \quad \text{and} \quad {v} = 0 \text{ on } \Gamma_{N},
\end{equation*}
and \({v}\not\equiv 0\) in \(\Omega\) (in any neighborhood of \({z}_{0}\)).
By \cite{HW53, HHT09}, there exist an integer \({l} \geq 1\) and a constant \(\tilde{c}_{l} \neq 0\) such that
\begin{equation*}
{v}({r}, \theta) = {l}\omega \tilde{c}_{l} {r}^{l\omega}\sin\big({l}\omega(\theta - \alpha + \pi/2)\big) + o({r}^{{l}\omega}).
\end{equation*}
Moreover, the nodal set \(\mathcal{Z}({v}) = \overline{\{{x} \in \Omega: \; {v}({x}) = 0\}}\) has exactly \({l} - 1\) branches near the origin, each tangent to the line
\begin{equation*}
x_{1} \sin(\frac{j\gamma}{l} + \alpha) - x_{2}\cos(\frac{j\gamma}{l} + \alpha) = 0
\end{equation*}
at the origin \({z}_{0}\) for \(j = 1, \ldots, {l} - 1\). Combining this with \eqref{LY514}, we obtain \(\gamma/{l} \geq \pi/2 - \beta\), hence \({l} \leq \gamma/(\pi/2 - \beta)\). Since \(\alpha > \beta\) implies \(\gamma = \pi - \alpha - \beta < \pi - 2\beta\), we conclude \({l} < 2\), thus \({l} = 1\). Hence,
\begin{equation*}
{v}({r}, \theta) = \tilde{c}_{1}\omega {r}^{\omega}\sin\big(\omega(\theta - \alpha + \pi/2)\big) + o({r}^{2}).
\end{equation*}
From \eqref{LY514}, we deduce that \(\tilde{c}_{1} > 0\) and \({v} > 0\) in a neighborhood of the origin \({z}_{0}\). Therefore, \({c}_{1} > 0\), which implies that the origin is not an extremum point of \({u}\).
\end{proof}

As a direct consequence of \autoref{lma501}, \autoref{lma502} and \autoref{lma503}, we obtain \autoref{thm104}.


\section{The eigenfunction with obtuse Neumann vertex} \label{Sec06EF}

In this section, we analyze the first mixed eigenfunction on a triangle whose Neumann vertex is \emph{obtuse}.
For a Lipschitz domain \({U}\) with a portion \(\Gamma_{1}\) of \(\partial{U}\), we denote by \(\lambda_{1}^{\mathrm{mix}}({U}, \Gamma_{1})\) the first (smallest) eigenvalue of the mixed Dirichlet-Neumann boundary value problem
\begin{equation}
\begin{cases}
\Delta \varphi + \lambda \varphi = 0 & \text{ in } {U}, \\
\partial_{\nu} \varphi = 0 & \text{ on } \Gamma_{1}, \\
\varphi = 0 & \text{ on } \Gamma_{0}: = \partial{U} \setminus \Gamma_{1}.
\end{cases}
\end{equation}
We write \(\lambda_{1}({U})\) for the first \emph{Dirichlet} eigenvalue of \(- \Delta\) in \({U}\).
For a continuous function \({v}\) on \(\overline{{U}}\), we denote its nodal line (zero level set) \(\mathcal{Z}({v})\) as the closure of \(\{ {x} \in {U}: {v}({x}) = 0 \}\), and refer to the connected components of \({U} \setminus \mathcal{Z}({v})\) as the \emph{nodal domains} of \({v}\).

\begin{thm} \label{thm601}
Let the triangle \(\Omega\), together with its Dirichlet boundary \(\Gamma_{D}\) and Neumann boundary \(\Gamma_{N}\), be as defined at the beginning of \autoref{Sec03acu}. Let \({u} > 0\) be the eigenfunction associated with \(\mu = \lambda_{1}^{\mathrm{mix}}(\Omega, \Gamma_{N})\). Assume that the Neumann vertex is obtuse (i.e., \(\gamma > \pi/2\)).
Then the following properties hold:
\begin{enumerate}
\item[\rm(1)]
\({u}\) is strictly decreasing in the inward normal direction to the longer Neumann side.
\item[\rm(2)]
\({u}\) has at most one non-vertex critical point, which lies on the longer Neumann side.
\item[\rm(3)]
Such a non-vertex critical point exists if and only if the triangle \(\Omega\) is not isosceles.
\end{enumerate}
\end{thm}

\begin{proof}
Define the set of non-vertex critical points by
\begin{equation}
\operatorname{crit}_{\mathrm{nv}}({u}): = \{{x} \in \overline{\Omega}: \; |\nabla {u}({x})| = 0 \text{ and } {x} \text{ is not a vertex}\}.
\end{equation}

\textbf{Step 1.}
We show that \(\operatorname{crit}_{\mathrm{nv}}({u}) \cap \Gamma_{N}\) contains at most one point, and at that point the second tangential derivative of \({u}\) is negative.

Let \({v}\) be the directional derivative of \({u}\) in the direction parallel to \(\Gamma_{N}^{ + }\), namely,
\begin{equation}
{v}: = \nabla {u} \cdot \mathbf{e}_{\pi/2 - \beta}.
\end{equation}
Applying the Hopf lemma to \({u}\) on \(\Gamma_{D}\) and using \autoref{lma302}, we obtain
\begin{equation} \label{LY604}
\begin{gathered}
{v} < 0 \text{ on } \operatorname{Int}(\Gamma_{D}), \\
{v} < 0 \text{ in } \mathcal{O}_{{z}_{2}} \cap (\Omega \cup \Gamma_{N}^{ + }) \quad \text{and} \quad
{v} > 0 \text{ on } \mathcal{O}_{{z}_{1}} \cap \Gamma_{N}^{ - },
\end{gathered}
\end{equation}
where \(\mathcal{O}_{{z}_{i}}\) denotes a small neighborhood of \({z}_{i}\) for \(i = 1, 2\). Note that
\begin{equation*}
\Delta {v} + \mu {v} = 0 \text{ in } \Omega \quad \text{and} \quad \partial_{\nu} {v} = 0 \text{ on } \Gamma_{N}^{ + }.
\end{equation*}
By classical nodal set theory (see, e.g., \cite[Theorem 4.1]{HHHO99}), for any \({p} \in \operatorname{crit}_{\mathrm{nv}}({u}) \cap \Gamma_{N}^{ + }\), there exists an integer \({n} \in \mathbb{N}^{ + }\) such that exactly \({n}\) smooth arcs of \(\mathcal{Z}({v})\) emanate from \({p}\), intersecting \(\Gamma_{N}^{ + }\) at angles \((2j - 1)/(2{n})\), \(j = 1, 2, \ldots, {n}\). Thus, \(\operatorname{crit}_{\mathrm{nv}}({u}) \cap \Gamma_{N}^{ + }\) consists of finitely many isolated points. An analogous argument for \(\Gamma_{N}^{ - }\) shows that \({u}\) has only finitely many critical points on \(\Gamma_{N}^{ - }\), and thus on \(\partial\Omega\).
Let \(\overline{\Gamma}_{N}^{\pm}\) denote the closures of the two Neumann sides \(\Gamma_{N}^{\pm}\).

\textbf{Step 1.1.}
\(\overline{\Gamma}_{N}^{ + } \cup \mathcal{Z}({v})\) cannot contain a loop, and \(\mathfrak{D}_{ - }: = \{{x} \in \Omega: {v}({x}) < 0\}\) is connected.

If there were a loop in \(\overline{\Gamma}_{N}^{ + } \cup \mathcal{Z}({v})\), it would enclose a nodal domain \(\mathfrak{D}_{1}\) with \(\partial\mathfrak{D}_{1} \subset \overline{\Gamma}_{N}^{ + } \cup \mathcal{Z}({v})\). Hence, \(\lambda_{1}^{\mathrm{mix}}\bigl(\mathfrak{D}_{1}, \partial\mathfrak{D}_{1} \cap \Gamma_{N}^{ + }\bigr) = \mu\). By the variational characterization of the first eigenvalue,
\begin{equation*}
\lambda_{1}^{\mathrm{mix}}\bigl(\mathfrak{D}_{1}, \partial\mathfrak{D}_{1} \cap \Gamma_{N}^{ + }\bigr)
> \lambda_{1}^{\mathrm{mix}}(\Omega, \Gamma_{N}) = \mu,
\end{equation*}
which is a contradiction. Thus, no such loop can occur.

Suppose, for contradiction, that \(\mathfrak{D}_{ - }\) is disconnected. By \eqref{LY604}, one component of \(\mathfrak{D}_{ - }\) meets \(\operatorname{Int}(\Gamma_{D})\), so there is another component \(\mathfrak{D}_{*}\) whose boundary does not meet \(\operatorname{Int}(\Gamma_{D})\). Since \(\overline{\Gamma}_{N}^{ + }\cup\mathcal{Z}({v})\) has no loop, \(\partial\mathfrak{D}_{*} \cap \Gamma_{D} = \emptyset\) and
\(\Gamma_{*}: = \partial\mathfrak{D}_{*} \cap \overline{\Gamma}_{N}^{ - } \neq \emptyset\). Define the auxiliary function
\begin{equation*}
\phi({x}) =
\begin{cases}
{v}({x}) & \text{ if } {x} \in \mathfrak{D}_{*},
\\
0 & \text{ if } {x} \in \Omega \setminus \mathfrak{D}_{*}.
\end{cases}
\end{equation*}
Then \(\phi\) satisfies
\begin{equation} \label{LY605}
\phi \in W^{1, 2}(\Omega) \quad \text{with} \quad \phi = 0 \text{ on } \Gamma_{D},
\end{equation}
and \(\partial_{\nu}\phi = 0\) on \(\Gamma_{N}^{ + }\). Integration by parts gives
\begin{equation*}
\int_{\Omega} |\nabla \phi|^{2} dx
= \mu \int_{\Omega} |\phi|^{2} dx + \int_{\partial\Omega} \phi \partial_{\nu}\phi ds_x
= \mu \int_{\Omega} |\phi|^{2} dx + \int_{\Gamma_{*}} {v}\partial_{\nu}{v} ds_x.
\end{equation*}
To handle \(\int_{\Gamma_{*}} {v} \partial_{\nu}{v} ds_{x}\), we use a technique inspired by Terence Tao \cite{Pol12}. Denote the tangential and normal vectors of \(\Gamma_{N}^{ - }\) by
\begin{equation*}
\mathit{e} = (\sin\alpha, - \cos\alpha), \quad \mathit{e}^{\perp} = (\cos\alpha, \sin\alpha),
\end{equation*}
and write \({v} = {c}_{1}\partial_{\mathit{e}}{u} + {c}_{2}\partial_{\mathit{e}^{\perp}}{u}\), with \({c}_{1} = \cos \gamma < 0\) and \({c}_{2} = \sin \gamma > 0\). A direct calculation yields
\begin{equation*}
{v}\partial_{\nu}{v}
= ({c}_{1}\partial_{\mathit{e}} {u}) \cdot ( - {c}_{2} (\partial_{\mathit{e}^{\perp}})^{2} {u})
= {c}_{1}{c}_{2} \partial_{\mathit{e}} {u} \cdot ((\partial_{\mathit{e}})^{2} {u} + \mu {u}).
\end{equation*}
Since \({v} < 0\) in \(\mathfrak{D}_{*}\), one infers \(\partial_{\mathit{e}} {u} \geq 0\) on \(\Gamma_{*}\), hence
\begin{equation*}
\int_{\Gamma_{*}} {u} \partial_{\mathit{e}} {u} ds_{x} \geq 0.
\end{equation*}
Moreover, \(\mathcal{Z}({v}) \cap \overline{\Gamma}_{N}^{ - }\) is a finite set of critical points of \({u}\), so \(\Gamma_{*}\) is a union of finitely many isolated points and finitely many line segments \({q}_{2i}{q}_{2i + 1}\), \(i = 0, \ldots, {m}\), where \({q}_{i}\) are distinct critical points of \({u}\) and \({q}_{i}\) lies to the left of \({q}_{i + 1}\). Therefore
\begin{align*}
\int_{\Gamma_{*}} \partial_{\mathit{e}} {u} (\partial_{\mathit{e}})^{2}{u} ds_x
=
\frac{1}{2} \int_{\Gamma_{*}} \partial_{\mathit{e}}\big(\partial_{\mathit{e}}{u}\big)^{2} ds_x
=
\frac{1}{2}\sum_{i = 0}^{m}(|\partial_{\mathit{e}} {u}({q}_{2i + 1})|^{2} - |\partial_{\mathit{e}} {u}({q}_{2i})|^{2} ) = 0.
\end{align*}
Hence,
\begin{equation*}
\int_{\Gamma_{*}} {v}\partial_{\nu}{v} ds_{x}
= {c}_{1}{c}_{2} \int_{\Gamma_{*}} \partial_{\mathit{e}}{u} (\partial_{\mathit{e}})^{2}{u} ds_{x}
+ {c}_{1}{c}_{2} \mu \int_{\Gamma_{*}} {u} \partial_{\mathit{e}}{u} ds_{x} \leq 0,
\end{equation*}
and so \(\phi\) satisfies \eqref{LY605} and
\begin{equation}
\int_{\Omega} |\nabla \phi|^{2} d{x} \leq \mu \int_{\Omega} |\phi|^{2} d{x}.
\end{equation}
By the variational characterization of the first mixed eigenvalue \(\mu = \lambda_{1}^{\mathrm{mix}}(\Omega, \Gamma_{N})\), \(\phi\) must be a multiple of \({u}\), contradicting the definition of \(\phi\). Therefore, \(\mathfrak{D}_{ - }\) is connected.

\textbf{Step 1.2.}
\(\operatorname{crit}_{\mathrm{nv}}({u}) \cap \Gamma_{N}^{ + }\) contains at most one point, and at that point the second-order tangential derivative of \({u}\) is strictly negative.
It is well-known (cf. Section 2 of \cite{JM20}) that the nodal line \(\mathcal{Z}({v})\) decomposes into immersed \({C}^{1}\) loops and properly immersed \({C}^{1}\) arcs. By Step 1.1, no loop can occur, so \(\mathcal{Z}({v})\) is a finite union of properly immersed \({C}^{1}\) arcs. Each such arc has exactly two distinct endpoints, one on \(\overline{\Gamma}_{N}^{ - }\) and one on \(\Gamma_{N}\). Since \(\mathfrak{D}_{ - }\) is connected, there is at most one arc ending on \(\Gamma_{N}^{ + }\). Standard nodal line theory (see \cite{HHHO99}) then shows that \({v}\) has at most one zero on \(\Gamma_{N}^{ + }\), and at that zero the second tangential derivative of \({u}\) is negative.

\textbf{Step 1.3.}
\({u}\) has at most one critical point on \(\Gamma_{N}^{ + } \cup \Gamma_{N}^{ - }\).
Suppose for contradiction that there are two such points. A similar argument as in Steps 1.1 and 1.2 implies that the restriction of \({u}\) to \(\Gamma_{N}^{ - }\) has at most one critical point, which is non-degenerate. Hence, \({u}\) has two critical points, \({p} \in \Gamma_{N}^{ + }\) and \({q} \in \Gamma_{N}^{ - }\). From \eqref{LY604},
\begin{equation}
{v} < 0 \text{ on } \operatorname{Int}(\Gamma_{D}) \cup \operatorname{Int}({z}_{2}{p}) \cup \operatorname{Int}({z}_{0}{q}) \quad \text{and} \quad {v} > 0 \text{ on } \operatorname{Int}({p}{z}_{0}) \cup \operatorname{Int}({q}{z}_{1}).
\end{equation}
Let \(\mathfrak{D}_{2}\) be the nodal domain of \({v}\) whose closure contains the line segment \({p}{z}_{0}\). Since \(\mathfrak{D}_{ - } = \{{x} \in \Omega: {v}({x}) < 0\}\) is connected (by Step 1.1), its boundary must contain both the line segments \({z}_{2}{p}\) and \({z}_{0}{q}\). Hence, \(\mathfrak{D}_{2}\) is bounded by the line segment \({p}{z}_{0}\) together with a portion of \(\mathcal{Z}({v})\), contradicting Step 1.1. This contradiction completes the proof of Step 1.

\textbf{Step 2.}
\(\operatorname{crit}_{\mathrm{nv}}({u})\) has at most one point, which lies on the longer Neumann side.

\textbf{Step 2.1.}
\({u}\) is strictly monotone in some direction.
By Step 1, \({u}\) has at most one non-vertex critical point on \(\partial\Omega\). Without loss of generality, assume there is no critical point on \(\Gamma_{N}^{ - }\). Consider the directional derivative \(\tilde{v} = \nabla {u} \cdot \mathbf{e}_{ - \beta}\) in the direction perpendicular to \(\Gamma_{N}^{ + }\). Then \(\tilde{v} \leq, \not\equiv 0\) on \(\partial\Omega\).
If \(\tilde{v}\) were positive somewhere in \(\Omega\), its positive nodal domain \(\mathfrak{D}_{3}\) would satisfy
\begin{equation*}
\Delta \tilde{v} + \mu \tilde{v} = 0 \text{ in } \mathfrak{D}_{3}, \quad \tilde{v} = 0 \text{ on } \partial\mathfrak{D}_{3},
\end{equation*}
forcing \(\mu = \lambda_{1}(\mathfrak{D}_{3}) > \lambda_{1}(\Omega) > \lambda_{1}^{\mathrm{mix}}(\Omega, \Gamma_{N}) = \mu\), a contradiction. Hence, \(\tilde{v} \leq 0\) in \(\Omega\), and by the strong maximum principle, \(\tilde{v} < 0\) in \(\Omega\). Furthermore, applying the Hopf lemma to \(\tilde{v}\) on \(\Gamma_{N}^{ + }\), we conclude that the inner normal derivative of \({u}\) on \(\Gamma_{N}^{ + }\) is negative. Consequently, the unique non-vertex critical point of \({u}\), if it exists, is a non-degenerate maximum.

\textbf{Step 2.2.}
The global maximum of \({u}\) cannot be attained in the interior of the strictly shorter Neumann side.
Without loss of generality, suppose \(\alpha > \beta\). Then the function \({w}^{0, \gamma/2}\), defined in \eqref{LY305a}, satisfies
\begin{equation*}
\begin{cases}
\Delta {w}^{0, \gamma/2} + \mu {w}^{0, \gamma/2} = 0 & \text{in } {D}_{0, \gamma/2}, \\
\partial_{\nu}{w}^{0, \gamma/2} = 0 & \text{on } \Gamma_{0, \gamma/2}^{2} \subset \Gamma_{N}^{ - }, \\
{w}^{0, \gamma/2} \geq, \not\equiv 0 & \text{on } \partial{D}_{0, \gamma/2} \setminus \Gamma_{0, \gamma/2}^{2}.
\end{cases}
\end{equation*}
If \({w}^{0, \gamma/2}\) were negative somewhere in \(\mathfrak{D}_{4}\), its negative nodal component \(\mathfrak{D}_{4}\) would satisfy
\begin{equation*}
\Delta {w}^{0, \gamma/2} + \mu {w}^{0, \gamma/2} = 0 \text{ in } \mathfrak{D}_{4}, \quad
{w}^{0, \gamma/2} = 0 \text{ on } \partial\mathfrak{D}_{4} \setminus \Gamma_{N}^{ - }, \quad
\partial_{\nu}{w}^{0, \gamma/2} = 0 \text{ on } \partial\mathfrak{D}_{4} \cap \Gamma_{N}^{ - }.
\end{equation*}
This gives \(\mu = \lambda_{1}^{\mathrm{mix}}(\mathfrak{D}_{4}, \partial\mathfrak{D}_{4} \cap \Gamma_{N}^{ - })\). On the other hand,
\begin{equation*}
\lambda_{1}^{\mathrm{mix}}(\mathfrak{D}_{4}, \partial\mathfrak{D}_{4} \cap \Gamma_{N}^{ - }) > \lambda_{1}^{\mathrm{mix}}(\Omega, \Gamma_{N}^{ - }) > \lambda_{1}^{\mathrm{mix}}(\Omega, \Gamma_{N}) = \mu,
\end{equation*}
a contradiction. Thus, \({w}^{0, \gamma/2} \geq 0\) in \({D}_{0, \gamma/2}\). By the strong maximum principle and the Hopf lemma, we obtain
\begin{equation} \label{LY608}
{w}^{0, \gamma/2} > 0 \text{ in } \overline{{D}_{0, \gamma/2}} \setminus {T}_{0, \gamma/2} \quad \text{ and } \quad\nabla {u} \cdot \mathbf{e}_{{\vartheta} + \alpha} > 0 \text{ on } \Omega \cap {T}_{0, {\vartheta}} \; \text{for } {\vartheta} = \gamma/2.
\end{equation}
Hence, the global maximum cannot occur in the interior of the strictly shorter Neumann side.

Combining all the steps above, we conclude that \({u}\) has at most one non-vertex critical point, which is a non-degenerate global maximum located on the longer Neumann side. Moreover, \({u}\) is monotone in the direction perpendicular to the longer Neumann side. For \(\alpha \neq \beta\), \autoref{lma503} implies that the global maximum is not located at the Neumann vertex, while for \(\alpha = \beta\), \autoref{thm401} implies that the global maximum is uniquely located at the Neumann vertex. This completes the proof.
\end{proof}

\begin{rmks}
The same arguments in the proof of \autoref{thm601} can be carried over to the case where the Neumann vertex is non-obtuse. The monotonicity of the first mixed eigenfunction in triangles can also be established via a continuity method under domain deformation \cite{JN00, JM20}.

Alternatively, the proof of \autoref{thm601} can be obtained by applying maximum principles to certain functions. We briefly list the steps in the case \(\alpha > \beta\):
\begin{enumerate}[label = \rm(\arabic*)]
\item
\({w}^{0, \gamma/2} > 0\) in \({D}_{0, \gamma/2}\), and \eqref{LY303a} holds for \(\lambda = 0\) and \(\vartheta = \gamma/2\); see also \eqref{LY608}.
\item
\({R}_{{z}_{0}} {u} > 0\) in the triangle enclosed by \({T}_{0, {\gamma}/2}\), \(\Gamma_{D}\), and \(\Gamma_{N}^{ + }\).
\item
\(\nabla {u} \cdot \mathbf{e}_{\alpha - \pi/2} < 0\) in the triangle enclosed by \({T}_{0, \pi/2}\), \(\Gamma_{D}\), and \(\Gamma_{N}^{ - }\), and \(\operatorname{crit}_{\mathrm{nv}}({u}) \cap \Gamma_{N}^{ - } = \emptyset\).
\item
\(\nabla {u} \cdot \mathbf{e}_{ - \beta} < 0\) in \(\Omega\).
\end{enumerate}
The remaining conclusions then follow from nodal line analysis.
\end{rmks}


\section*{Acknowledgments}
The authors sincerely thank the anonymous referees for their comments which help improve the presentation of the paper. 
The research of Rui Li was supported in part by Natural Science Foundation of Top Talent of SZTU (No. GDRC202213) and the National Natural Science Foundation of China (No. 12101416). The research of Ruofei Yao was supported in part by the Guangdong Basic and Applied Basic Research Foundation (No. 2025A1515011856).


\begin{thebibliography}{10}

\bibitem{Ale56}
A.~D. Aleksandrov.
\newblock Uniqueness theorems for surfaces in the large. {I}.
\newblock {\em Vestnik Leningrad. Univ.}, 11(19):5--17, 1956.

\bibitem{BPP04}
R.~Ba\~nuelos, M.~M. Pang, and M.~N. Pascu.
\newblock Brownian motion with killing and reflection and the ``hot-spots''
problem.
\newblock {\em Probab. Theory Related Fields}, 130(1):56--68, 2004.

\bibitem{BCN93}
H.~Berestycki, L.~Caffarelli, and L.~Nirenberg.
\newblock Symmetry for elliptic equations in a half space.
\newblock In {\em Boundary value problems for partial differential equations
and applications}, volume~29 of {\em RMA Res. Notes Appl. Math.}, pages
27--42. Masson, Paris, 1993.

\bibitem{BCN96}
H.~Berestycki, L.~Caffarelli, and L.~Nirenberg.
\newblock Inequalities for second-order elliptic equations with applications to
unbounded domains. {I}.
\newblock {\em Duke Math. J.}, 81(2):467--494, 1996.

\bibitem{BCN97a}
H.~Berestycki, L.~Caffarelli, and L.~Nirenberg.
\newblock Further qualitative properties for elliptic equations in unbounded
domains.
\newblock {\em Ann. Scuola Norm. Sup. Pisa Cl. Sci. (4)}, 25(1-2):69--94, 1997.

\bibitem{BCN97b}
H.~Berestycki, L.~Caffarelli, and L.~Nirenberg.
\newblock Monotonicity for elliptic equations in unbounded {L}ipschitz domains.
\newblock {\em Comm. Pure Appl. Math.}, 50(11):1089--1111, 1997.

\bibitem{BN88}
H.~Berestycki and L.~Nirenberg.
\newblock Monotonicity, symmetry and antisymmetry of solutions of semilinear
elliptic equations.
\newblock {\em J. Geom. Phys.}, 5(2):237--275, 1988.

\bibitem{BN90}
H.~Berestycki and L.~Nirenberg.
\newblock Some qualitative properties of solutions of semilinear elliptic
equations in cylindrical domains.
\newblock In {\em Analysis, et cetera}, pages 115--164. Academic Press, Boston,
MA, 1990.

\bibitem{BN91}
H.~Berestycki and L.~Nirenberg.
\newblock On the method of moving planes and the sliding method.
\newblock {\em Bol. Soc. Brasil. Mat. (N.S.)}, 22(1):1--37, 1991.

\bibitem{BNV94}
H.~Berestycki, L.~Nirenberg, and S.~R. Varadhan.
\newblock The principal eigenvalue and maximum principle for second-order
elliptic operators in general domains.
\newblock {\em Comm. Pure Appl. Math.}, 47(1):47--92, 1994.

\bibitem{BP89}
H.~Berestycki and F.~Pacella.
\newblock Symmetry properties for positive solutions of elliptic equations with
mixed boundary conditions.
\newblock {\em J. Funct. Anal.}, 87(1):177--211, 1989.

\bibitem{CGS89}
L.~A. Caffarelli, B.~Gidas, and J.~Spruck.
\newblock Asymptotic symmetry and local behavior of semilinear elliptic
equations with critical {S}obolev growth.
\newblock {\em Comm. Pure Appl. Math.}, 42(3):271--297, 1989.

\bibitem{CGY26}
H.~Chen, C.~Gui, and R.~Yao.
\newblock Uniqueness of the critical point of the second {N}eumann
eigenfunction on triangle.
\newblock {\em Invent. Math.}, 2026.
\newblock Online.

\bibitem{CLY21}
H.~Chen, R.~Li, and R.~Yao.
\newblock Symmetry of positive solutions of elliptic equations with mixed
boundary conditions in a sub-spherical sector.
\newblock {\em Nonlinearity}, 34(6):3858--3878, 2021.

\bibitem{CWY23}
H.~Chen, K.~Wu, and R.~Yao.
\newblock Qualitative properties of nonnegative solutions of some semilinear
elliptic equations in cylindrical domains.
\newblock {\em Calc. Var. Partial Differential Equations}, 62(6):Paper No. 178,
23, 2023.

\bibitem{CY18}
H.~Chen and R.~Yao.
\newblock Symmetry and monotonicity of positive solution of elliptic equation
with mixed boundary condition in a spherical cone.
\newblock {\em J. Math. Anal. Appl.}, 461(1):641--656, 2018.

\bibitem{CL91}
W.~X. Chen and C.~Li.
\newblock Classification of solutions of some nonlinear elliptic equations.
\newblock {\em Duke Math. J.}, 63(3):615--622, 1991.

\bibitem{CW92}
C.-P. Chu and H.-C. Wang.
\newblock Symmetry properties of positive solutions of elliptic equations in an
infinite sectorial cone.
\newblock {\em Proc. Roy. Soc. Edinburgh Sect. A}, 122(1-2):137--160, 1992.

\bibitem{CEG16}
C.~Cort\'azar, M.~Elgueta, and J.~Garc\'ia-Meli\'an.
\newblock Nonnegative solutions of semilinear elliptic equations in
half-spaces.
\newblock {\em J. Math. Pures Appl. (9)}, 106(5):866--876, 2016.

\bibitem{DP19}
L.~Damascelli and F.~Pacella.
\newblock Morse index and symmetry for elliptic problems with nonlinear mixed
boundary conditions.
\newblock {\em Proc. Roy. Soc. Edinburgh Sect. A}, 149(2):305--324, 2019.

\bibitem{DR04}
J.~D\'{a}vila and J.~D. Rossi.
\newblock Self-similar solutions of the porous medium equation in a half-space
with a nonlinear boundary condition: existence and symmetry.
\newblock {\em J. Math. Anal. Appl.}, 296(2):634--649, 2004.

\bibitem{dCP89}
M.~De~Cesare and F.~Pacella.
\newblock Geometrical properties of positive solutions of semilinear elliptic
equations in some unbounded domains.
\newblock {\em Boll. Un. Mat. Ital. B (7)}, 3(3):691--703, 1989.

\bibitem{DF22}
L.~Dupaigne and A.~Farina.
\newblock Classification and {L}iouville-type theorems for semilinear elliptic
equations in unbounded domains.
\newblock {\em Anal. PDE}, 15(2):551--566, 2022.

\bibitem{Far20}
A.~Farina.
\newblock Some results about semilinear elliptic problems on half-spaces.
\newblock {\em Math. Eng.}, 2(4):709--721, 2020.

\bibitem{FSV08}
A.~Farina, B.~Sciunzi, and E.~Valdinoci.
\newblock Bernstein and {D}e {G}iorgi type problems: new results via a
geometric approach.
\newblock {\em Ann. Sc. Norm. Super. Pisa Cl. Sci. (5)}, 7(4):741--791, 2008.

\bibitem{GNN79}
B.~Gidas, W.-M. Ni, and L.~Nirenberg.
\newblock Symmetry and related properties via the maximum principle.
\newblock {\em Comm. Math. Phys.}, 68(3):209--243, 1979.

\bibitem{GNN81}
B.~Gidas, W.-M. Ni, and L.~Nirenberg.
\newblock Symmetry of positive solutions of nonlinear elliptic equations in
{$\mathbb{R}^{n}$}.
\newblock In {\em Mathematical analysis and applications, {P}art {A}},
volume~7a of {\em Adv. Math. Suppl. Stud.}, pages 369--402. Academic Press,
New York-London, 1981.

\bibitem{Gri11}
P.~Grisvard.
\newblock {\em Elliptic Problems in Nonsmooth Domains}, volume~69 of {\em
Classics in Applied Mathematics}.
\newblock Society for Industrial and Applied Mathematics (SIAM), Philadelphia,
PA, 2011.

\bibitem{GM18}
C.~Gui and A.~Moradifam.
\newblock The sphere covering inequality and its applications.
\newblock {\em Invent. Math.}, 214(3):1169--1204, 2018.

\bibitem{HW53}
P.~Hartman and A.~Wintner.
\newblock On the local behavior of solutions of non-parabolic partial
differential equations.
\newblock {\em Amer. J. Math.}, 75:449--476, 1953.

\bibitem{Hat24}
L.~Hatcher.
\newblock First mixed {L}aplace eigenfunctions with no hot spots.
\newblock {\em Proc. Amer. Math. Soc.}, 152(12):5191--5205, 2024.

\bibitem{HHHO99}
B.~Helffer, M.~Hoffmann-Ostenhof, T.~Hoffmann-Ostenhof, and M.~P. Owen.
\newblock Nodal sets for groundstates of {S}chr\"{o}dinger operators with zero
magnetic field in non-simply connected domains.
\newblock {\em Comm. Math. Phys.}, 202(3):629--649, 1999.

\bibitem{HHT09}
B.~Helffer, T.~Hoffmann-Ostenhof, and S.~Terracini.
\newblock Nodal domains and spectral minimal partitions.
\newblock {\em Ann. Inst. H. Poincar\'{e} C Anal. Non Lin\'{e}aire},
26(1):101--138, 2009.

\bibitem{JN00}
D.~Jerison and N.~Nadirashvili.
\newblock The ``hot spots'' conjecture for domains with two axes of symmetry.
\newblock {\em J. Amer. Math. Soc.}, 13(4):741--772, 2000.

\bibitem{JM20}
C.~Judge and S.~Mondal.
\newblock Euclidean triangles have no hot spots.
\newblock {\em Ann. of Math. (2)}, 191(1):167--211, 2020.

\bibitem{JM22ar}
C.~Judge and S.~Mondal.
\newblock Erratum: {E}uclidean triangles have no hot spots.
\newblock {\em Ann. of Math. (2)}, 195(1):337--362, 2022.

\bibitem{LZ95}
Y.~Li and M.~Zhu.
\newblock Uniqueness theorems through the method of moving spheres.
\newblock {\em Duke Math. J.}, 80(2):383--417, 1995.

\bibitem{MY26}
J.~Mai and R.~Yao.
\newblock Symmetry and monotonicity of positive solutions to elliptic equations
with mixed boundary conditions in a kite.
\newblock {\em Discrete Contin. Dyn. Syst.}, 46:305--330, 2026.

\bibitem{Pol12}
Polymath.
\newblock Polymath project 7 research thread 5: the hot spots conjecture, {June
3, 2012 through August 9, 2013}.
\newblock
\url{https://polymathprojects.org/2013/08/09/polymath7-research-thread-5-the-hot-spots-conjecture/}.

\bibitem{Ser71}
J.~Serrin.
\newblock A symmetry problem in potential theory.
\newblock {\em Arch. Rational Mech. Anal.}, 43:304--318, 1971.

\bibitem{SGC97}
X.~D. Shi, Y.~G. Gu, and J.~Chen.
\newblock Symmetry and monotonicity of positive solutions of systems of
semilinear elliptic equations.
\newblock {\em Acta Math. Sci. (Chinese)}, 17(1):1--9, 1997.

\bibitem{Siu16}
B.~Siudeja.
\newblock On mixed {D}irichlet-{N}eumann eigenvalues of triangles.
\newblock {\em Proc. Amer. Math. Soc.}, 144(6):2479--2493, 2016.

\bibitem{Ter95}
S.~Terracini.
\newblock Symmetry properties of positive solutions to some elliptic equations
with nonlinear boundary conditions.
\newblock {\em Differential Integral Equations}, 8(8):1911--1922, 1995.

\bibitem{WH07}
W.~Wang and L.~Hong.
\newblock Positive solutions of some semilinear elliptic equations in
{$\mathbb{R}^{n}_{+}$} with {N}eumann boundary conditions.
\newblock {\em Nonlinear Anal.}, 66(4):936--949, 2007.

\bibitem{Yao26}
R.~Yao.
\newblock Symmetry properties for positive solutions of mixed boundary value
problems in a sub-spherical sector.
\newblock {\em arXiv:2602.16630}, pages 1--46, 2026.

\bibitem{YCG21}
R.~Yao, H.~Chen, and C.~Gui.
\newblock Symmetry of positive solutions of elliptic equations with mixed
boundary conditions in a super-spherical sector.
\newblock {\em Calc. Var. Partial Differential Equations}, 60(4):Paper No. 130,
25, 2021.

\bibitem{YCL18}
R.~Yao, H.~Chen, and Y.~Li.
\newblock Symmetry and monotonicity of positive solutions of elliptic equations
with mixed boundary conditions in a super-spherical cone.
\newblock {\em Calc. Var. Partial Differential Equations}, 57(6):Paper No. 154,
28, 2018.

\bibitem{Zhu01}
M.~Zhu.
\newblock Symmetry properties for positive solutions to some elliptic equations
in sector domains with large amplitude.
\newblock {\em J. Math. Anal. Appl.}, 261(2):733--740, 2001.

\end{thebibliography}
\end{document}